\documentclass[12pt,twoside]{amsart}

\usepackage{amsmath}
\usepackage[english]{babel}
\usepackage{amsfonts}
\usepackage{amssymb}
\usepackage{enumerate}
\usepackage{geometry}
\usepackage{graphicx}
\usepackage{color}
\usepackage[mathscr]{euscript}
\usepackage{amsthm}
\usepackage{array}

\newcommand{\cm}{\mathrm{cm}}

\newcommand{\A}{\mathbb{A}}
\newcommand{\B}{\mathbb{B}}
\newcommand{\C}{\mathbb{C}}
\newcommand{\dia}{\diamondsuit}

\newcommand{\uhr}{\!\!\upharpoonright\!\!}

\DeclareMathOperator{\NF}{succ}

\DeclareMathOperator{\RO}{RO}

\DeclareMathOperator{\orb}{orb}

\DeclareMathOperator{\cf}{cf}
\DeclareMathOperator{\CF}{CF}

\DeclareMathOperator{\Aut}{Aut}

\DeclareMathOperator{\id}{id}
\DeclareMathOperator{\hgt}{ht}

\newtheorem{thm}{Theorem}[section]
\newtheorem{lm}[thm]{Lemma}
\newtheorem{prp}[thm]{Proposition}

\theoremstyle{definition}
\newtheorem{defi}[thm]{Definition}
\newtheorem{expl}[thm]{Example}

\theoremstyle{remark}
\newtheorem{rem}[thm]{Remark}

\begin{document}
\title{Souslin Algebra Embeddings}
\author{Gido Scharfenberger-Fabian}
\address{Ernst-Moritz-Arndt-Universit\"at\\
Robert-Blum-Stra\ss e 3\\
17489 Greifswald\\
Germany\\
Phone/Fax: 0049 - 3834 - 86 4638/-15}
\email{gido.scharfenberger-fabian@uni-greifswald.de}
\keywords{Souslin algebra, Souslin tree, diamond principle,
Baire category}
\subjclass[2000]{03E05, 06E10, 54H05}
\date{\today}

\begin{abstract}
A Souslin algebra is a complete Boolean algebra
whose main features are ruled by a tight combination
of an antichain condition with an infinite distributive law.

The present article divides into two parts.
In the first part a representation theory
for the complete and atomless subalgebras
of Souslin algebras is established
(building on ideas of Jech and Jensen).
With this we obtain some basic results
on the possible types of subalgebras
and their interrelation.

The second part begins with a review of some generalizations
of results from descriptive set theory concerning Baire category
which are then used
in non-trivial Souslin tree constructions that yield Souslin algebras
with a remarkable subalgebra structure.
In particular, we use this method to prove
that under the diamond principle there is a
bi-embeddable though not isomorphic
pair of homogeneous Souslin algebras.
\end{abstract}
\maketitle

\section*{Introduction}

Souslin trees are well-known to most set theorists,
Souslin algebras not so well any more,
and subalgebras of Souslin algebras in general
are suspected of being a messy business.
I would like to put in a good word for them here.

We consider $\kappa$-Souslin algebras
(for definitions see the following section)
as well as their representations
by normal $\kappa$-Souslin trees and address the problem how to
describe and classify complete embeddings between $\kappa$-Souslin algebras.
After introducing the representation for Souslin subalgebras
we give a rough classification of the possible types of embeddings and
find implications between existence statements involving them
as well as counter examples proving non-implications.

The representation theory of Part 1 of this text
was primarily developed exclusively for the case
$\kappa=\aleph_1$ in my PhD thesis
(\cite{diss}\footnote{Readers familiar with \cite{diss} should note
1.) that the notion of tree equivalence relation (t.e.r.) as defined in this 
article corresponds to what is called a \emph{decent} t.e.r. in \cite{diss},
and 
2.) that only Part 1 and Sections \ref{sec:reduction}
and \ref{sec:rigid-non-rigid}
of Part 2 consist of generalizations of results given in \cite{diss}
while the remainder of Part 2 brings new material.})
in order to establish the
consistency of the existence of a chain homogeneous $\aleph_1$-Souslin algebra
(cf. the subsequent paper \cite{mcsa}).
Here we take $\kappa$ to be any regular cardinal.
To describe Souslin subalgebras we
define the notion of  a \emph{tree equivalence relation}
on a $\kappa$-normal tree.
The properties considered for the classification of embeddings are mainly 
\emph{niceness} (introduced by Jensen, cf. \cite{devlin-johnsbraten})
and \emph{largeness}
and the global negations thereof.
For example, we show that large subalgebras are always nice
(Theorem \ref{thm:large}) and
that the existence of a nice and nowhere large subalgebra always implies
that there also is a nowhere nice subalgebra
(Theorem \ref{thm:exist_non-nice}).
We also study connections between
the symmetric structure of $\kappa$-Souslin algebras
(i.e. its automorphisms)
and its subalgebra structure.
Apart from the applications as given in this text,
the representation theory might also be useful
in the study of intermediate models of generic extensions
built using $\kappa$-Souslin trees or $\kappa$-Souslin algebras
or in other related areas of set theoretic research.

The topological notions developed
at the beginning of Part 2
in Section \ref{sec:dst} serve to facilitate
the choice of the relevant limit levels
in the $\kappa$-Souslin tree constructions
and provide a nice tool for involved diagonalization procedures.
The argument is a refinement of the
\emph{Diagonal Principle} as formulated
by Jech in \cite[p.63]{jech_automorphisms}:
\begin{quotation}
If $T$ is a countable normal tree of limit length, then there exists a branch
through $T$ which satisfies a countable number of prescribed conditions.\\
(E.g., given a countable set $B$ of branches of $T$, there exists a branch
which is not in $B$.)
\end{quotation}
Here we observe that the set of relevant branches is (a generalization of)
a Polish space and ``conditions'' are comeagre subsets.
The idea to use Baire category for Souslin tree constructions
is not at all new.
Taking into account the correspondence between
Baire category and Cohen reals it was implicitely used by
Jensen in his countable models constructions of Souslin trees
(cf. \cite[Chapters IV and V]{devlin-johnsbraten})
whose generic branches are close to being Cohen reals.
Or, for a more recent example, it applies along with
the parametrized diamond principles for Baire category
as considered in \cite{hrusak-dzamonja-moore}.

Nevertheless, this tool has, as far as I know,
not yet been used to perform advanced Souslin tree constructions.
We use it to construct a rigid $\mu^+$-Souslin algebra
with non-rigid Souslin subalgebras (Section \ref{sec:rigid-non-rigid}),
a rigid $\mu^+$-Souslin algebra with an essentially  unique Souslin subalgebra
(Section \ref{sec:unique_non-nice}) and 
a pair of $\mu^+$-Souslin algebras that forms a counter example to the
Schr\"oder-Bernstein-Theorem for $\mu^+$-Souslin algebras
(Section \ref{sec:no_schroeder}).
In a subsequent paper (\cite{mcsa}) I will present constructions of  
chain homogeneous $\aleph_1$-Souslin algebras
in which the same method is applied.
Of course, non of these constructions can be carried out in ZFC alone.
We do not intend to give an exhaustive picture of what can be done under
varying hypothesis and simply
assume variants of the well-known diamond principle $\dia$.

The paper is fairly self-contained, though of course some acquaintance with
Souslin tree constructions is an advantage for the reader.

\section{Preliminaries}

Our notation and terminology follow mainly
\cite{koppelberg} and \cite{devlin-johnsbraten} (Boolean)
and \cite{jechneu} (set theoretic,
exception: we use $\varphi"M$
to denote the image of the set $M$ under the mapping $\varphi$).

All Boolean algebras considered in this text are complete
and all subalgebras are tacitly assumed to be \emph{regular},
i.e., if $A$ is a subalgebra of $B$ and
for some $M\subset A$ the infimum $\sum^A M$
with respect to $A$ exists
(and it does as $A$ is assumed to be complete),
then it coincides with the sum $\sum^B M$ taken in $B$.
If $M$ is a subset of the Boolean algebra $B$, then $\langle M\rangle^\cm$
denotes \emph{the subalgebra of $B$ completely generated by $M$},
i.e., the least complete subalgebra of $B$ which contains $M$ as a subset.

A frequently used item is the \emph{canonical (upper) projection}
of a (complete) Boolean $B$ algebra onto its 
subalgebra $A$:
$$h = h_{B,A}: B \to A,\quad b \mapsto \sum\{ a\in A\mid\, ab=a\}.$$
As usual we omit subscripts if there is no danger of confusion.
Note that $h$ is not a homomorphism as it only respects sums but neither
products nor complements in general.

Whenever we talk of the \emph{natural ordering} of a Boolean algebra $B$,
we mean the relation defined by $a \leq_B b :\iff ab = a$.
We denote the \emph{relative algebra of $B$ with respect to $b$} by
$B\uhr b = \{ a\in B \mid\, a\leq b\}$.
When a (complete) subalgebra $A$ of $B$ and an element $b\in B$ are given,
we might also consider the algebra of products
$bA = b\cdot A = \{ba\,\mid\, a\in A\}$ which is a (complete)
subalgebra of $B\uhr b$.
In this situation the projection $h=h_{B,A}$ gives rise
to an isomorphism between $bA$ and $A\uhr h(b)$,
the inverse map being multiplication with $b$.

A (complete) Boolean algebra $B$ is called
\begin{itemize}
\item[]\emph{simple}, if it has no atomless (complete) subalgebras,
\item[]\emph{rigid}, if it admits no automorphisms except for the
identical map,
\item[]\emph{homogeneous}, if for every $b\in B^+$
the relative algebra $B\uhr b$ is isomorphic to $B$.
\end{itemize}

\subsection{$\kappa$-Souslin algebras}

Let $\kappa$ be an regular uncountable cardinal.
An \emph{antichain} of a Boolean algebra is a subset consisting of pairwise
disjoint elements,
and the \emph{$\kappa$-chain condition} states that
every antichain is of cardinality less than $\kappa$.
A \emph{$\kappa$-Souslin algebra} is a complete Boolean algebra that
satisfies both the $\kappa$-chain condition and the
\emph{$(\kappa,\infty)$-distributive law}, i.e.,
for index sets $I,J$ where $|I|<\kappa$ and $J$ is arbitrary and
each family $(a_{ij}\mid\, i\in I,\,j\in J )$ of elements
the following equation holds:
$$\sum_{i\in I}\prod_{j\in J}a_{ij}=
\prod\left\{\sum_{i\in I} a_{if(i)}\mid f\in {{^I}J}\right\}.$$
This distributive law also has valuable characterizations in terms of
common refinements of \emph{paritions of unity}, i.e. maximal antichains
(cf. \cite[Propositions 14.8/9]{koppelberg}):
A Boolean algebra is $(\kappa,\infty)$-distributive if and only if
every family $(X_i)_{i\in I}$ of less than $\kappa$ maximal antichains
has a common refinement, i.e., there is a maximal antichain $X$ such that
for every $i\in I$ and each member $a\in X_i$ there is
an element $b\in X$ with $ab=b$, i.e.,
$b$ lies below $a$ in the natural partial ordering of $B$.
As a consequence, if $B$ is $(\kappa,\infty)$-distributive and 
$A$ is a subalgebra of $B$ which is completely generated by fewer than
$\kappa$ elements, then $A$ is atomic.
These two results are heavily used when a $\kappa$-Souslin algebra
is represented as the regular open algebra of a $\kappa$-Souslin tree.

Note that every atomless (and complete) subalgebra of a $\kappa$-Souslin
algebra is $\kappa$-Souslin itself. We therefore call these subalgebras
\emph{Souslin subalgebras} (omitting the parameter $\kappa$ as it is
determined by the context).

A result concerning $\kappa$-Souslin algebras and well-known only in the case
where $\kappa=\aleph_1$ is Solovay's barrier for the cardinality
of $\kappa$-Souslin algebras (cf. \cite[Theorem 30.20]{jechneu}):
A $\kappa$-Souslin algebra can have at most $2^\kappa$ elements.
We will not use this result here
as we concentrate on $\kappa$-Souslin algebras that can be represented by
$\kappa$-Souslin trees and therefore always are of cardinality $2^{<\kappa}$.

\subsection{Trees}

A \emph{tree} is a partial order $(T,<_T)$ with the additional property,
that for every element $t\in T$,
the set of its predecessors, $\{s<_T t\}:=\{s\in T\mid s<_Tt\}$,
is well-ordered by the ordering $<_T$.
Whenever possible, we omit the subscript $_T$
and denote the tree ordering just by $<$.

The elements of a tree are called its \emph{nodes}, the minimal
elements are \emph{roots}.
The \emph{height} of a node, $\hgt t$, is the order type of the well-order
$(\{s<t\},<)$.
Nodes of limit height are also called \emph{limit nodes}.
If $\hgt t = \gamma +1$ is a successor ordinal, then we denote by
$t^- :=  t\uhr\gamma$ the immediate predecessor of $t$.

For every node $t$ we define the set of its immediate successors,
$$\NF t:=\{s\in T\mid t<s\text{ and }\hgt s=\hgt t+1\}.$$
For a cardinal $\mu$ we say that $T$ is \emph{$\mu$-splitting}
if every node has exactly $\mu$ immediate successors.
For every ordinal $\alpha$ we define the $\alpha$th level of $T$ and denote it by
$T_\alpha:=\{t\in T\mid \hgt t=\alpha\}$.
The \emph{height of} $T$ is the minimal ordinal $\alpha$ such that $T_\alpha$ is empty.
For a subset $C$ of $\hgt T$ we consider the tree
$$T\uhr C=\bigcup_{\alpha\in C}T_\alpha$$
with the ordering $<$ inherited from $T$ and call this tree
\emph{the restriction of $T$ to (the levels from) $C$}.
If $t\in T_\alpha$ and $\gamma<\alpha$ then $t\uhr\gamma$
denotes the unique predecessor of $t$ on level $\gamma$.

A subset $b$ of a tree $T$ is a \emph{branch}
if it is closed downwards and linearly ordered by $<$.
The \emph{length} $\ell(b)$ of a branch $b$
is just its order type with respect to $<$.
We sometimes take branches to be maps $\ell(b)\to b$ enumerating the nodes
in a monotone way.
A branch $x\subset T$ of limit length $\lambda$ is \emph{extended} if there
is a node $t\in T_\lambda$ that dominates all members of $x$:
$t>s$ for all $s\in x$.
A branch is \emph{cofinal} if its length coincides with $\hgt T$.
An \emph{antichain} of $T$ is a subset
that consists of pairwise incomparable nodes.
We call branches or antichains \emph{maximal} if they cannot be extended.
Note, that every (non-empty) level of $T$ is a maximal antichain.

A tree $T$ is $\mu$-closed if all branches in $T$, whose length has cofinality
less than $\mu$, are extended.
In particular, a $\mu$-closed tree has no maximal branches with cofinality
less than $\mu$.

A tree $T$ is \emph{normal} if the following hold:
\begin{itemize}
\item $T$ has a unique root,
\item every node $t$ has at least two successors on every level
$T_\alpha$ with $\hgt t<\alpha<\hgt T$
\item branches of limit length $\lambda$ have at most one extension to level
$T\lambda$ (the \emph{unique limits} condition)
\end{itemize}
A tree $T$ is \emph{$\mu$-normal} if it is normal and
every level of $T$ has less than $\mu$ nodes.

For every node $t\in T$ we let $T(t):=\{s\in T\mid t\leq s\}$
and call it the \emph{tree $T$ relativized to $t$}.
A \emph{homogeneous} tree is a tree $T$, that admits tree isomorphisms
between $T(s)$ and $T(t)$ for all pairs $s,t$ of nodes from the same
level $T_\alpha$ of $T$.
A \emph{rigid} tree has no tree automorphism but the
identical map.
Operations on trees sometimes used in the text are the \emph{tree product}
and the \emph{tree sum}
$$S \otimes T \,:=\, \bigcup_{\gamma<\alpha}S_\gamma\times T_\gamma
\qquad\text{ and }\qquad
 S \oplus T\, :=\, \{\mathbf{root}\}\cup S \, \dot{\cup}\, T\, , $$
where $\alpha=\hgt T = \hgt S$ and {\bf root} is a new node,
equipped with the obvious orderings.

The apparatus used in Part 2 of the article rests entirely
on the following definition, which is albeit useful also in Part 1:
For a normal tree $T$ of limit height
let $[T]$ be the set of cofinal branches of $T$.
We topologise $[T]$ with the basis that consists of the sets 
$\hat{s} := \{ x\in [T]\,\mid\, s \in x\}$ for all $s\in T$.
With this topology $[T]$ is a regular Hausdorff (i.e. T$_3$) space
of weight $|T|$.
Moreover, if $T$ is an $\aleph_1$-normal tree of countable limit height,
then $[T]$ is a Polish space, i.e.,
it is completely metrizable and second countable.

\subsection{$\kappa$-Souslin trees}

Now let $\kappa$ be an uncountable, regular cardinal.
A \emph{$\kappa$-Souslin tree} is a tree of height $\kappa$
that has neither antichains nor branches of size $\kappa$.
Note, that a $\kappa$-normal tree of height $\kappa$
is $\kappa$-Souslin if and only if it has no cofinal branches.
A \emph{subtree} is a subset which is a union of branches,
i.e., it is closed downwards.
(For example, $\{s<t\}\cup T(t)$ is always a subtree of $T$.)
Every $\kappa$-Souslin tree has a normal subtree which is $\kappa$-Souslin.
In this text we only consider normal Souslin trees.

The following Subtree Lemma is well-known for the case $\kappa=\omega_1$ but
its proof (as given, e.g., in \cite{larson})
literally translates to the general, regular case.
It captures the content of the notion of a $\kappa$-Souslin tree without
recourse to related structures such as Souslin lines or $\kappa$-Souslin
algebras.
\begin{lm}[Subtree Lemma]
\label{lm:subtree}
Let $\kappa$ be an uncountable, regular cardinal and $T$ a normal
$\kappa$-Souslin tree.
If $S$ is a subtree of $T$ with $|S|=\kappa$ then $S$ contains a subtree
$\{s<t\}\cup T(t)$ for some $t\in T$.
\end{lm}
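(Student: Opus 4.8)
The plan is to argue by contradiction. Assume $S$ contains no subtree of the form $\{s<t\}\cup T(t)$; since $S$ is downward closed this means precisely that no full cone $T(t)$ is contained in $S$. One might first be tempted to build an increasing chain of nodes $t_\alpha$ with $|S\cap T(t_\alpha)|=\kappa$ and hope that it produces the desired cone, but such a chain can fail to extend at limit stages, so I would instead exploit the complement of $S$.

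First I would observe that $D:=T\setminus S$ is dense in the tree order: every node has an extension (itself, or --- if it lies in $S$ --- some strictly larger node, since its cone is not contained in $S$) inside $D$; moreover $D\neq\emptyset$, as $S=T$ would already give the cone at the root. A dense subset of a poset contains a maximal antichain of the whole poset, so by Zorn's lemma I would pick an antichain $A\subseteq D$ maximal among antichains contained in $D$ and check, using density, that $A$ is actually a maximal antichain of $T$ (if some $t$ were incomparable to all of $A$, take $s\geq t$ in $D$; maximality of $A$ forces $s$, hence also $t$, to be comparable to a member of $A$). Since $T$ is $\kappa$-Souslin, $|A|<\kappa$.

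Then comes the counting argument. Each $t\in S$ is comparable to some $a\in A$; it cannot satisfy $t\geq a$, for then downward closure of $S$ would put $a\in S$, contradicting $a\in D$; hence $t<a$. Thus $S\subseteq\bigcup_{a\in A}\{s\in T\mid s<a\}$, and $|\{s<a\}|=|\hgt a|<\kappa$ for each $a\in A$. Because $|A|<\kappa=\cf(\kappa)$, the heights $\{\hgt a\mid a\in A\}$ are bounded by some $\delta<\kappa$, whence $|S|\leq|A|\cdot|\delta|<\kappa$ (here $\kappa$ is regular and uncountable) --- contradicting $|S|=\kappa$.

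I expect the two delicate points to be the passage from a maximal antichain \emph{of $D$} to a maximal antichain \emph{of $T$}, and the closing cardinal arithmetic, where regularity of $\kappa$ is used twice: to bound the heights occurring in $A$ and to keep the product $|A|\cdot|\delta|$ below $\kappa$. Everything else --- density of the complement and the downward-closure observation --- is routine.
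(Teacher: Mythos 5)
Your proof is correct, and it is essentially the standard argument that the paper invokes by reference to the literature (take a maximal antichain in the complement $T\setminus S$, which is dense if no cone lies in $S$, and observe that $S$ would then be covered by the $<\kappa$ many predecessor sets of that antichain, contradicting $|S|=\kappa$ by regularity). Both delicate points you flag --- upgrading a maximal antichain of $D$ to one of $T$ via linearity of predecessor sets, and the closing cardinality count --- are handled correctly.
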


In order to turn a tree into a Boolean algebra we provide it with the
(reversed) partial order topology:
The basic open sets are $T(s)$ for $s\in T$.
Then we simply take the regular open algebra $\RO T$
of the space $T$ with this topology.
The basic representation lemma for Souslin algebras is given by (the proof of)
\cite[Theorem 14.20]{koppelberg}:
\begin{enumerate}[1)]
\item For every (normal) $\kappa$-Souslin tree $T$
its regular open algebra $\RO T$ is $\kappa$-Souslin, and
\item for every $\kappa$-Souslin algebra $\B$,
if $\B$ is completely generated by $\kappa$ many of its elements,
then there is a (normal) $\kappa$-Souslin tree $T$ which can be
(with reversed order) regularly embedded onto a dense subset of $\B$.
\end{enumerate}
We stress once more that in the present paper
only $\kappa$-Souslin algebras are considered
which are completely generated by trees as in (2) above.
Following \cite{devlin-johnsbraten}
we call a subset $T$ of the $\kappa$-Souslin algebra $\B$
a \emph{Souslinization of $\B$} if $(T,>_\B)$ is a normal $\kappa$-Souslin
tree and the limit nodes in $T$ are obtained as products over their
predecessors: $s = \prod\{ t\in T\mid\, t >_\B s\}$.
A minor inconvenience of this terminology is that we regard trees as growing
upwards while Souslinizations grow downwards with respect to
the natural Boolean order $\leq_\B$ of $\B$.
If possible we prefer the tree order view, i.e.,
the common phrase ``$t$ is above $s$''
is tantamount to ``$s$ is closer to the root than $t$''
or in Boolean notation to $t\leq_B s$.

Two Souslinizations of the same $\kappa$-Souslin algebra can look quite
different, e.g. 2-splitting vs. infinitarily splitting.
However, by the following Restriction Lemma they always coincide on a
club set of levels. We will use this fact in a considerable portion of proofs.
\begin{lm}[Restriction Lemma]
\label{lm:restriction}
If the $\kappa$-Souslin algebras $\A$ and $\B$ are souslinized by
$S$ and $T$ respectively and if $\varphi:\A \to \B$ is an isomorphism,
then there is a club set $C\subset\kappa$ such that
the restriction of $\varphi$ to $S\uhr C$ is an isomorphism onto $T\uhr C$.
\end{lm}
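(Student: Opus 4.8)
The plan is to reduce the assertion to a statement about two Souslinizations of a single Souslin algebra and then to pin down the club by a closing-off argument resting on the Subtree Lemma. First I would note that, as $\varphi$ is an isomorphism of \emph{complete} Boolean algebras, it preserves the natural order and all sums and products; hence $T':=\varphi"S$, ordered by $>_\B$, is again a normal $\kappa$-Souslin tree each of whose limit nodes is the product of its predecessors, i.e.\ $T'$ is a Souslinization of $\B$. So it suffices to show that any two Souslinizations $T$ and $T'$ of the same $\kappa$-Souslin algebra $\B$ agree on a club, $T\uhr C=T'\uhr C$ (as subsets of $\B$, both carrying the restriction of $>_\B$). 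Granting this, and since $\varphi$ is a tree isomorphism of $S$ onto $\varphi"S$, it carries $S\uhr C$ bijectively and order-isomorphically onto $(\varphi"S)\uhr C=T'\uhr C=T\uhr C$, which is exactly the claim.

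The heart of the matter is: every node of $T$ lies in $\langle T'\uhr\beta\rangle^\cm$ for some $\beta<\kappa$, and symmetrically. To see this, fix $b\in\B$ and let $U_b:=\{t'\in T'\mid 0\neq t'b\neq t'\}$ be the set of nodes of $T'$ that are \emph{split} by $b$. A short computation shows that a tree-predecessor of a split node is split, so $U_b$ is a subtree of $T'$, and $|U_b|<\kappa$: otherwise the Subtree Lemma would furnish $t'_*\in T'$ with $T'(t'_*)\subseteq U_b$, whereas the density of $T'$ in $\B$ yields a node $t'\in T'$ below $t'_*b$, which lies in $T'(t'_*)$ but, being $\leq_\B b$, is not split — a contradiction. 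As $\kappa$ is regular, $U_b$ is then bounded, say $U_b\subseteq T'\uhr\beta$; for such $\beta$ every member of the maximal antichain $T'_\beta$ is $\leq_\B b$ or disjoint from $b$, so $b=\sum\{t'\in T'_\beta\mid t'\leq_\B b\}\in\langle T'\uhr(\beta+1)\rangle^\cm$. Applying this to each node of $T$ (and symmetrically to each node of $T'$) gives the claim.

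Next I would run the closing-off. Put $\B_\alpha:=\langle T\uhr\alpha\rangle^\cm$ and $\B'_\alpha:=\langle T'\uhr\alpha\rangle^\cm$. Since the levels of a $\kappa$-Souslin tree have size $<\kappa$ and $\kappa$ is regular, the claim produces a club $C_0$ of ordinals $\alpha$ with $T\uhr\alpha\subseteq\B'_\alpha$ and $T'\uhr\alpha\subseteq\B_\alpha$, whence $\B_\alpha=\B'_\alpha$ whenever $\alpha$ is a limit point of $C_0$. Finally I would recover the tree from the algebra: for limit $\alpha<\kappa$ the set of atoms of $\B_\alpha$ equals $T_\alpha$. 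One inclusion is that, for $t\in T_\alpha$, the trace on $\B\uhr t$ of each generator level $T_\beta$ ($\beta<\alpha$) is just $\{0,t\}$, so $\B_\alpha\uhr t$ is trivial and $t$ is an atom. For the other inclusion, an atom $a$ of $\B_\alpha$ lies below a unique node of each $T_\beta$ ($\beta<\alpha$); these nodes form a branch $x$ of $T\uhr\alpha$ and $a\leq_\B\prod x$; but by the defining property of a Souslinization $\prod x$ is the node of $T_\alpha$ extending $x$ when $x$ is extended, and is $0$ otherwise (a non-zero value would, by density, give a node extending $x$ to level $\alpha$, against unique limits), so $a$ is that extending node. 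Hence $\At(\B_\alpha)=T_\alpha$ and likewise $\At(\B'_\alpha)=T'_\alpha$; taking for $C$ the (still club) set of limit points of $C_0$ we get $T_\alpha=\At(\B_\alpha)=\At(\B'_\alpha)=T'_\alpha$ for all $\alpha\in C$, i.e.\ $T\uhr C=T'\uhr C$, which completes the reduction and thus the lemma.

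I expect the main obstacle to be the size estimate $|U_b|<\kappa$: this is precisely where the Subtree Lemma does its work, ruling out that the nodes split by a fixed element of $\B$ form a cofinal subtree of a Souslinization. The atom computation at the end is routine but does genuinely use the Souslinization hypothesis that limit nodes are products of their branches; without it two Souslinizations could in principle differ on a stationary set of levels.
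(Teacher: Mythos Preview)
Your argument is correct and follows essentially the same route as the proof the paper cites from Jech: pull the problem back to two Souslinizations of one algebra, use the Subtree Lemma to bound the nodes of one tree that are ``split'' by a fixed element (equivalently, to show that each level of one tree is eventually refined by a level of the other), and then close off to a club. Your finish via $\At(\B_\alpha)=T_\alpha$ is a mildly more algebraic variant of the usual ``mutual refinement of maximal antichains forces equality'' step, but it amounts to the same thing; the only cosmetic slip is the parenthetical ``against unique limits'' --- what you actually use there is just that a non-extended branch has product $0$, which you verify correctly by density.
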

For a proof take the one of \cite[Lemma 25.6]{jech}
(or a solution to \cite[Exercise 30.15]{jechneu})
and translate ``countable'' to ``less than $\kappa$''.

\subsection{$\dia$-principles}

For a cardinal $\kappa$ and a stationary subset $E\subset\kappa$
we denote the following statement by $\dia_\kappa(E)$:
\begin{quotation}
There is a sequence $(R_\alpha)_{\alpha<E}$ (the \emph{$\dia$-sequence})
such that for every subset $X$ of $\kappa$ the set
$$\{\alpha \in E \mid X \cap \alpha = R_\alpha \}$$
is stationary in $\kappa$.
\end{quotation}
The principle $\dia_\kappa(E)$ implies that $2^{\kappa}=\kappa^+$ and is
therefore not a theorem of ZFC.
But for many stationary sets $E$ it follows from
G\"odel's axiom of constructibility and can be made true by forcing.
We will use this principle in situations where $\kappa$ is a successor cardinal
$\kappa = \mu^+$ with $\mu = \mu^{<\mu}$
and $E = \CF_\mu = \{\alpha < \kappa \,\mid\, \cf(\alpha)=\mu\}$.

\part{Elementary representation and classification
of Souslin subalgebras}

Throughout all of Part 1 let
$\kappa$ denote a regular uncountable cardinal.

\section{Tree equivalence relations}
\label{sec:ter}

Subalgebras of Souslin algebras have been considered before,
e.g.\ in \cite{jech_simple} or \cite[\S 5]{bekkali-bonnet},
\cite{koppelberg-monk}
and more implicitly in \cite{devlin-johnsbraten} or \cite[\S 8]{larson}.
To represent a subalgebra $\A$ of the Souslin algebra $\B$
with respect to some Souslinization $T$ of $\B$,
the first three sources define
a \emph{good equivalence relation} on the Souslinization $T$,
while the last two use maps between trees $T\uhr C$
(for some club set $C\subseteq\omega_1$) and a Souslinization $S$ of $\A$.

We combine the two approaches in so far
as we will consider equivalence relations,
which are designed in a way such that they directly induce
the relevant mappings between the Souslinizations.

\begin{defi}
\begin{enumerate}[a)]
\item Let $T$ be a $\kappa$-normal tree of height $\mu\leq\kappa$.
An equivalence relation $\equiv$ on $T$
is a \emph{tree equivalence relation (t.e.r.)} if
\begin{enumerate}[i)]
\item
$\equiv$ respects levels, i.e., $s\equiv t$ only if $\hgt_T s=\hgt_T t$;
\item
$\equiv$ is compatible with $<_T$, i.e., for $s<_T s'$ and $t<_T t'$
with $s$ and $t$ of the same height, 
$s'\equiv t'$ implies $s\equiv t$;
\item
the induced partial order on the set $T/\!\equiv$ of $\equiv$-cosets given by
$$a<_{T/\equiv}b\iff (\exists s\in a,\,t\in b)s<_T t$$
for $a,b\in T/\!\equiv$ is a $\kappa$-normal tree order;
\item $\equiv$ is \emph{honest}, by which we mean that for all triples $(s,s',t)$ of nodes
$s\equiv t$ in some level $\gamma$ of $T$ and $s'>_T s$ the following holds:
If there is no successor of $t$  that is equivalent to $s'$, then the
same holds already for $s'\uhr(\gamma+1)$, i.e.,
there is no $t'\in T_{\gamma+1}$ above $t$ equivalent to $s'\uhr(\gamma+1)$.

\end{enumerate}
\item If $T$ souslinizes $\B$ and $\A$ is a Souslin
subalgebra of $\B$,
we say that the t.e.r. $\equiv$ on $T$
\emph{represents $\A$ on $T$}
if the sums over the $\equiv$-classes form a dense subset of $\A$:
$$\left\langle \sum s/\!\equiv\,\mid s\in T\right\rangle^\cm=\A.$$
\end{enumerate}
\end{defi}

\begin{rem}
\begin{enumerate}
\item Note that in point (iii) the tree $T/\!\!\equiv$ has unique limits.
This implies that on a limit level $T_\alpha$ the t.e.r. $\equiv$ is
completely determined by its behavior on $T\uhr\alpha$ below.
\item Furthermore, as the tree order on $T/\!\!\equiv$ splits in every node,
we get that every t.e.r. represents an \emph{atomless},
i.e. a Souslin subalgebra.
\item Call a triple $(s,s',t)$ of nodes a \emph{dispute (on $\equiv$)}
if $s\equiv t$ and $s<s'$ yet there is no successor of $t$ equivalent to $s'$,
i.e., $(s,s',t)$ is as in the definition of honesty above.
Then $\equiv$ is honest if and only if for every dispute $(s,s',t)$ on $\equiv$
already $(s,s'\uhr(\hgt(s)+1),t)$ is a dispute.
This is illustrated in figure \ref{fig:honest+nice}.
\item Honesty prevents a t.e.r.
from associating two nodes of level $T_\gamma$
that can be distinguished
by the subalgebra that the t.e.r. represents.
In partucular, if a $\kappa$-Souslin tree carries two different t.e.r.s,
then the subalgebras represented by these t.e.r.s differ as well.
\end{enumerate}
\end{rem}

\begin{figure}[h]
\begin{center}
\input{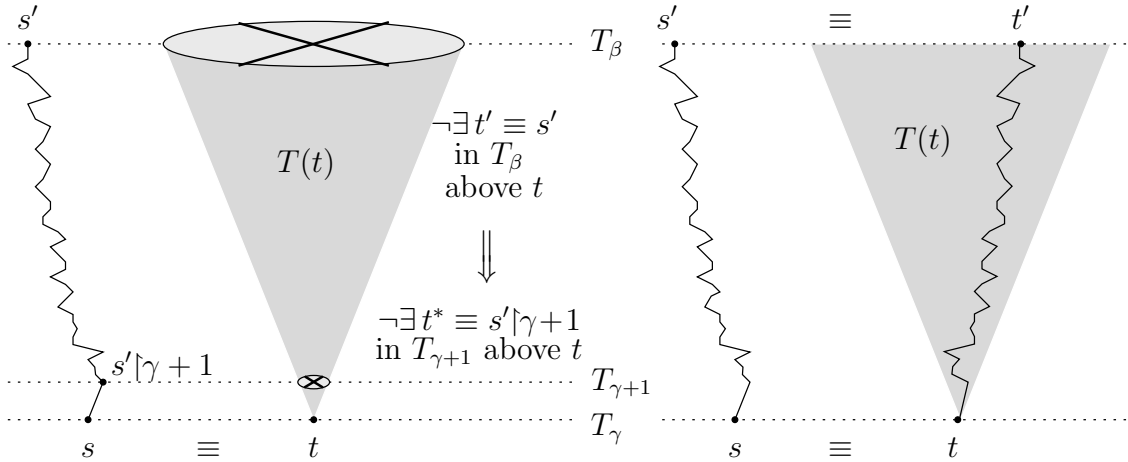_t}
\caption{Honesty of a tree equivalence relation ---
the dispute case on the left hand side versus the nice case on the right}
\label{fig:honest+nice}
\end{center}
\end{figure}

For the moment, let us denote by 
\emph{pre-t.e.r.} an equivalence relation on a tree which
satisfies conditions (i-iii) above but not necessarily honesty.

Part b) of the following proposition
gives us a necessary criterion
for testing whether a pre-t.e.r. is honest
with respect to a limit level $T_\alpha$.
With its aid we can destroy unwanted
t.e.r.s/sub\-algebras in recursive Souslin algebra constructions
during which we have to choose appropriate limit levels of a tree
(cf.~Example \ref{expl:simple} and Theorem \ref{thm:unique_non-nice}).

\begin{prp}
\label{prp:irred-dense}
Let $T$ be a $\kappa$-normal tree of height $\beta\le\kappa$ carrying a
t.e.r. $\equiv$.
Let $\alpha<\beta$ be a limit ordinal.
Consider the equivalence relation
$\simeq$ on $[T\uhr\alpha]$ induced by $\equiv$ through
$$x\simeq y\quad:\iff\quad(\forall \gamma<\alpha)x\uhr\gamma=y\uhr\gamma.$$
\begin{enumerate}[a)]
\item The $\simeq$-classes are closed subsets of $[T]$.
\item For $s\in T_\alpha$ denote by $x_s$ the branch $\{r\in T\uhr\alpha\mid
r<s\}\in [T\uhr\alpha]$.
For each branch $x\in [T]$ consider its class $x/\!\!\simeq$ as a
subspace of $[T]$.
Then for every $s\in T_\alpha$ the $\alpha$-branches associated
to the members of the $\equiv$-class of $s$,
i.e. the set
$$\{x_r\mid r\in T_\alpha \text{ and } r\equiv s\},$$
lies densely in the corresponding class $x_s/\!\!\simeq$.
Stated in more elementary terms,
for every node $s\in T_\alpha$,
branch $y\simeq x_s$ and ordinal $\gamma<\alpha$ there is a node
$t$ in level $T_\alpha$ such that $t\equiv s$ and $t > y\uhr\gamma$.
\end{enumerate}
\end{prp}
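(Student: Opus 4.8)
The plan for (a) is to show that the complement of each $\simeq$-class is open in $[T\uhr\alpha]$, where the $\simeq$-classes live. Fix a class $C=x_0/\!\!\simeq$. If $y\in[T\uhr\alpha]$ and $y\not\simeq x_0$, then by the definition of $\simeq$ there is $\gamma<\alpha$ with $y\uhr\gamma\not\equiv x_0\uhr\gamma$; the basic open set $\widehat{y\uhr\gamma}=\{z\in[T\uhr\alpha]\mid y\uhr\gamma\in z\}$ then contains $y$, and every $z$ in it has $z\uhr\gamma=y\uhr\gamma\not\equiv x_0\uhr\gamma$, hence $z\not\simeq x_0$. So $\widehat{y\uhr\gamma}$ avoids $C$, and $C$ is closed.

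For (b) I would first dispose of the routine part. If $r\in T_\alpha$ and $r\equiv s$, then compatibility of $\equiv$ with $<_T$ (condition (ii)) gives $r\uhr\delta\equiv s\uhr\delta$ for every $\delta<\alpha$, i.e.\ $x_r\simeq x_s$; hence $\{x_r\mid r\in T_\alpha,\ r\equiv s\}\subseteq x_s/\!\!\simeq$. A neighbourhood base at a point $y$ of the subspace $x_s/\!\!\simeq$ consists of the sets $\widehat{y\uhr\gamma}\cap(x_s/\!\!\simeq)$, $\gamma<\alpha$, and $x_t$ lies in $\widehat{y\uhr\gamma}$ exactly when $t>y\uhr\gamma$. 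So density reduces to the elementary statement of the proposition: for every $s\in T_\alpha$, every $y\simeq x_s$ and every $\gamma<\alpha$ there is $t\in T_\alpha$ with $t\equiv s$ and $t>y\uhr\gamma$ (such a $t$ automatically has $x_t\in x_s/\!\!\simeq$).

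The heart of the matter is this last claim, and here I would invoke honesty (condition (iv)). Since $\alpha$ is a limit ordinal and $\gamma<\alpha$, we have $\gamma+1<\alpha$, so $y\uhr(\gamma+1)$ is a genuine node of $T$; and $y\simeq x_s$ gives both $y\uhr\gamma\equiv s\uhr\gamma$ and $y\uhr(\gamma+1)\equiv s\uhr(\gamma+1)$ (recall $x_s\uhr\delta=s\uhr\delta$). Thus $y\uhr(\gamma+1)$ is a node of $T_{\gamma+1}$ above $y\uhr\gamma$ and equivalent to $s\uhr(\gamma+1)$. Applying honesty in contrapositive form to the triple $(s\uhr\gamma,\,s,\,y\uhr\gamma)$ --- which is admissible because $s\uhr\gamma\equiv y\uhr\gamma$ lie in level $\gamma$ and $s>s\uhr\gamma$ --- the existence of a successor of $y\uhr\gamma$ in $T_{\gamma+1}$ equivalent to $s\uhr(\gamma+1)$ forces the existence of a successor of $y\uhr\gamma$ equivalent to $s$ itself; since $\equiv$ respects levels and $s\in T_\alpha$, that successor lies in $T_\alpha$ and is the required $t$.

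I expect the delicate point to be precisely the way honesty enters. Honesty only lets one propagate a match upward once it has been secured one level above $\gamma$; for a mere pre-t.e.r.\ (conditions (i)--(iii) only) one can perfectly well have $s\uhr\gamma\equiv y\uhr\gamma$ while no successor of $y\uhr\gamma$ in $T_{\gamma+1}$ is equivalent to $s\uhr(\gamma+1)$, so there is in general no way to start the propagation. What rescues the argument --- and what makes the limit hypothesis on $\alpha$ essential --- is that the strong assumption $y\simeq x_s$ supplies exactly that initial one-level step for free, via the node $y\uhr(\gamma+1)$; honesty then carries the equivalence the rest of the way up to the limit level $\alpha$.
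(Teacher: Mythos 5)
Your proof is correct and for the substantive part (b) it is essentially the paper's own argument: the paper phrases it as a proof by contradiction, noting that if no $t>y\uhr\gamma$ with $t\equiv s$ existed then $(s\uhr\gamma,s,y\uhr\gamma)$ would be a dispute whose honesty-mandated shadow $(s\uhr\gamma,s\uhr(\gamma+1),y\uhr\gamma)$ is refuted by the node $y\uhr(\gamma+1)$, which is exactly your contrapositive. For (a) you show the complement of a class is open directly, whereas the paper observes that $x/\!\!\simeq\,=[S^x]$ for the subtree $S^x=\bigcup x/\!\!\simeq$; both are routine and equivalent.
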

\begin{proof}
Part a) follows easily from the fact
that for each $x\in[T]$ the set
$$S^x = \bigcup x/\!\!\simeq\,\,=
\{s\in T \mid\,(\exists\, y\simeq x)\,s\in y\}$$
is a subtree of $T$ and $x/\!\!\simeq\,\, = [S^x]$, which is always closed.

To prove b) by contradiction,
assume that for $s,y$ and $\gamma$ as above there is no $t>y\uhr\gamma$,
$t\equiv s$.
Then the triple $(s\uhr\gamma, s, y\uhr\gamma)$ would constitute a
dispute on $\equiv$, but $s\uhr(\gamma+1) \equiv y\uhr (\gamma+1)$.
This contradicts point (iv) of the last definition.
\end{proof}

\begin{rem}
  \begin{enumerate}
  \item Note that, while in Proposition \ref{prp:irred-dense} we used different
symbols for the t.e.r. $\equiv$ and the induced equivalence relation $\simeq$
on the space of branches of length $\alpha$
(because here this difference was crucial)
we will further on denote the induced relation
with the same symbol as the t.e.r.
(in most cases: $\equiv$).
  \item In some of the later arguments we will identify the branches
of the form $x_s$ with the corresponding nodes $s$.
  \item Given an equivalence relation $\equiv$
on some topological space $\mathscr{X}$,
call a subset $M \subset \mathscr{X}$ \emph{suitable for $\equiv$}
if for every member $x\in M$
the intersection $M \cap (x/\!\!\equiv)$
is a dense subset of the space $x/\!\!\equiv$.
With this notion at hand,
the conclusion of Proposition \ref{prp:irred-dense}.b) reads as:
\begin{quotation}
The set of branches $\{x_s\mid s\in T_\alpha\}$ corresponding to the nodes
of level $T_\alpha$ is suitable for the equivalence relation induced by
$\equiv$ on the $\alpha$-branches of $T$.
\end{quotation}
  \end{enumerate}
\end{rem}

Jensen defined a subalgebra $\A$ of a $\kappa$-Souslin algebra $\B$ to be a
\emph{nice subalgebra}
if there is some Souslinization $T$ of $\B$ such that the image of $T$ under
the canonical projection $h:\B\to\A,\, b\mapsto \prod\{a\in\A\mid b\le a\}$
is a Souslinization of $\A$.
We now define the corresponding notion for t.e.r.s.
\begin{defi}
\begin{enumerate}[a)]
\item A t.e.r. $\equiv$ on $T$ is called \emph{nice},
if for all $s,s',t$ in $T$ with
$s<_T s'$ and $s\equiv t$ there is some $t'>_T t$ with $s'\equiv t'$.
\item A t.e.r. $\equiv$ on $T$ is called \emph{almost nice},
if for all $s,s',t$ in $T$ with
$s<_T s'$ and $s\equiv t$ and $\hgt(s)=\alpha+1$
for some $\alpha$ there is some $t'>_T t$ with $s'\equiv t'$.
\end{enumerate}
\end{defi}

\begin{rem}
\begin{enumerate}
\item Obviously niceness is the complete absence of disputes
and almost niceness means that no dipute may have its lower nodes
in a successor level of the tree.
So both properties imply the honesty of the t.e.r.
(from now on we can forget about pre-t.e.r.s).
\item Honesty and niceness are handed down to any restriction to
a club set of levels while almost niceness is not,
because such a restriction can turn a limit level into a successor level.
On the other hand it is easy to see,
that every t.e.r. can be obtained as a restriction of
an almost nice t.e.r. to some club set of levels.
\item 
It is easy to see that the nice subalgebras (with respect to Jensen's
definition) are exactly those that can be
represented by nice t.e.r.s.
Given a t.e.r. $\equiv$ on a Souslinization $T$ of $\B$
let us denote the associated projection by
$$\pi_\equiv:T\to\B,\quad t\mapsto\sum t/\!\!\equiv.$$
The t.e.r. $\equiv$ is nice if and only if $\pi_\equiv = h \uhr T$,
and it is almost nice if and only if $\pi_\equiv$ and $h$ coincide
on all successor levels of $T$.
\end{enumerate}
\end{rem}

The next lemma will be called \emph{the Representation Lemma for
Souslin subalgebras}.
\begin{lm}
\label{lm:representation}
Let $\A$ be a Souslin subalgebra of the $\kappa$-Souslin algebra $\B$,
and let $S$ be any Souslinization of $\B$.
\begin{enumerate}[a)]
\item There is a Souslinization $T$ of $\B$ that admits an almost nice t.e.r.
  $\equiv$ representing $\A$.
\item There are a club $C\subseteq\kappa$ and a t.e.r.
$\equiv$ on $S\uhr C$ such that $\equiv$ represents $\A$.
\item If $\A$ is furthermore nice and represented by $\equiv$ on $S$
  then there is a club $C\subseteq\kappa$ such that $\equiv$
  is nice on $S\uhr C$.
\end{enumerate}
\end{lm}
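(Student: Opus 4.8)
The three parts build on each other, so I would organize the argument as follows. For part (a), the plan is to take any Souslinization $S$ of $\B$ and ``repair'' it level by level so as to realize the canonical projection $h=h_{\B,\A}$ as a map between trees. Concretely, define on each level $S_\alpha$ the relation $s\equiv_0 t$ iff $h(s)=h(t)$; this is a pre-t.e.r. on successor levels but may fail condition (iii) (the quotient need not be $\kappa$-normal with unique limits) on limit levels. The standard fix is to pass to a new Souslinization $T$ whose nodes are the elements $s\cdot(\prod_{\gamma<\alpha}\text{something})$ — more precisely, one re-chooses the limit nodes of the tree so that on limit levels the $\equiv$-classes are exactly the sets of nodes lying below a common element of $\A$; away from limit levels one keeps $S$. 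Because $\B$ is $(\kappa,\infty)$-distributive and $\A$ is a Souslin subalgebra completely generated by the sums $\sum s/\!\!\equiv$, the quotient $T/\!\!\equiv$ is then a $\kappa$-normal tree Souslinizing $\A$, and by construction $\pi_\equiv$ agrees with $h$ on successor levels — i.e.\ $\equiv$ is almost nice. Honesty is automatic from the almost-niceness (first Remark after the definition of nice), and the density clause $\langle\sum s/\!\!\equiv\rangle^{\cm}=\A$ holds since the range of $h$ generates $\A$.

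For part (b), I would invoke the Restriction Lemma (Lemma~\ref{lm:restriction}) together with part (a). Let $T$ be the Souslinization from (a) carrying the almost nice t.e.r.\ $\equiv$ representing $\A$, and let $S$ be the given Souslinization. Since $T$ and $S$ are both Souslinizations of $\B$ and the identity is an isomorphism of $\B$, there is a club $C\subseteq\kappa$ on which $\varphi=\id$ restricts to an isomorphism of $S\uhr C$ onto $T\uhr C$. Transport $\equiv$ across this isomorphism to obtain an equivalence relation on $S\uhr C$; it is immediate that conditions (i)--(iii) are preserved, and honesty is preserved because honesty is inherited by restrictions to club sets (second Remark after the definition of nice). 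The represented subalgebra is unchanged because on a club set of levels the sums $\sum s/\!\!\equiv$ are the same elements of $\B$, hence complete-generate the same subalgebra $\A$.

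For part (c), suppose $\A$ is nice and already represented by $\equiv$ on $S$. By the characterization in the last Remark before the lemma, there is \emph{some} Souslinization $T'$ of $\B$ on which the canonical projection $h$ maps $T'$ onto a Souslinization of $\A$; equivalently, on $T'$ the corresponding t.e.r.\ $\equiv'$ is nice, i.e.\ $\pi_{\equiv'}=h\uhr T'$. Now apply the Restriction Lemma twice (or once, to the isomorphism $\id:\B\to\B$ between the Souslinizations $S$ and $T'$) to get a club $C$ on which $S\uhr C$ and $T'\uhr C$ are isomorphic via the identity; since $\equiv$ and $\equiv'$ both represent $\A$, the fourth Remark after the definition of t.e.r.\ (a $\kappa$-Souslin tree carrying two t.e.r.s representing different subalgebras — here the same subalgebra) forces $\equiv$ and $\equiv'$ to agree on $S\uhr C=T'\uhr C$. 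Hence $\equiv$ is nice on $S\uhr C$, because niceness, being the total absence of disputes, transfers across the identification and is inherited by the club restriction.

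The main obstacle I expect is in part (a): carrying out the re-choice of limit nodes so that the quotient $T/\!\!\equiv$ is genuinely $\kappa$-normal with unique limits, and verifying that one really can arrange $\pi_\equiv=h$ on \emph{all} successor levels simultaneously. The subtlety is that on a limit level the t.e.r.\ is forced by its behavior below (first Remark after the definition of t.e.r.), so one does not have free choice there; one must check that the limit-level classes so determined are still ``large enough'' to split in the quotient and fine enough that the generated subalgebra is exactly $\A$ and not something smaller. This is where the $(\kappa,\infty)$-distributive law and the hypothesis that $\A$ is completely generated by $\kappa$ elements (so that Souslinizations of $\A$ exist at all, by the basic representation lemma) do the real work, via a common-refinement argument on maximal antichains; I would present this as the heart of the proof and treat (b) and (c) as comparatively routine consequences of the Restriction Lemma.
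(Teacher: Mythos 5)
Your treatment of parts (b) and (c) --- transporting the t.e.r.\ across the identity via the Restriction Lemma, and for (c) using that two t.e.r.s on the same tree representing the same subalgebra must coincide --- is in line with the paper, which simply declares (b) and (c) to be consequences of (a) and the Restriction Lemma. The problem is in part (a), and it is a genuine gap rather than a presentational one.

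You claim that on an arbitrary Souslinization $S$ the relation $s\equiv_0 t:\iff h(s)=h(t)$ is already a pre-t.e.r.\ on successor levels, so that only the limit nodes need to be re-chosen, and you locate the heart of the proof there. This is backwards. At a limit level there is nothing to arrange: the nodes are the products over cofinal branches, and (as you yourself quote) the t.e.r.\ there is completely determined by its behaviour below. The real work is at the \emph{successor} levels, and keeping the successor levels of $S$ does not work: for $s<_T s'$ and $t<_T t'$ with $s,t$ of the same (successor) height, $h(s')=h(t')$ does not imply $h(s)=h(t)$ --- monotonicity of $h$ only gives $h(s),h(t)\geq h(s')$ --- so condition (ii) can fail, and there is no reason the $h$-fibers of a level of $S$ should induce a $\kappa$-normal tree quotient. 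What the paper does, and what your sketch is missing, is to rebuild each successor level so that its $h$-image is a \emph{partition of unity in} $\A$: first refine $T_\alpha$ to a partition $P$ splitting every node with strictly decreasing $h$-images (and absorbing a dense set of $\B$ along the way), then multiply $P$ by the set $Q$ of atoms of $\langle h''P\rangle^{\cm}$ --- this subalgebra is atomic by $(\kappa,\infty)$-distributivity, being completely generated by fewer than $\kappa$ elements --- and take $T_{\alpha+1}=\{pq\mid p\in P,\ q\in Q\}\setminus\{0\}$, so that $h(pq)=q$ and $h''T_{\alpha+1}=Q$. It is exactly this step that makes $h(s)=h(t)$ compatible with the tree order, makes the quotient $\kappa$-normal and splitting, and gives $\pi_\equiv=h$ on successor levels, i.e.\ almost niceness. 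Your instinct that distributivity and a common-refinement argument do the real work is correct, but that work must be done at the successor steps, not by re-choosing limit nodes, where --- as you note yourself --- there is no freedom left.
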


\begin{proof}
We only prove part a) since
parts b) and c) follow directly from part a) by the Restriction Lemma.
Before constructing $T$ and $\equiv$ by recursion,
we describe a method of refining
a given partition $P$ of unity in $\B$
to a partition $R$ in $\B$ with the property,
that $h''R$ is a partition in $\A$.
Let $Q$ be the set of atoms of $\langle h''P\rangle^\cm \subset \A$
and define
$$R=\{pq\mid p\in P,\,q\in Q\}\setminus\{0\}.$$
Then $R$ refines $P$, and
for $pq\in R$ we have $h(pq)=qh(p)=q$ since $q$ is an atom.
So $h''R=Q$.

Now fix a dense subset $\{x_{\alpha+1}\mid\alpha\in\kappa\}$ of $\B$
indexed by successor ordinals.
Starting with the root level $T_0=\{1_\B\}$
let $P_\alpha$ be any partition in $\B$ refining $T_\alpha$ in
such a way that every $s\in T_\alpha$ 
is divided in at least two parts,
for all $s\in T_\alpha$ the image $h(s)$
is not equal to the $h$-images of the parts of $s$,
and $x_\alpha\in\langle P_\alpha\rangle^\cm$.
Then let $T_{\alpha+1}$ be the refinement
of $P_\alpha$ with respect to $h$ as described above.
So $h''T_{\alpha+1}$ is a partition in $\A$.
The limit levels of $T$ are canonically defined as
$$T_\alpha:=\left\{\prod b\mid b\in [S\uhr\alpha]\right\}\setminus\{0\}.$$
Thus $T$ is a Souslinization of $\B$.
The t.e.r. is then given on successor levels by
$$s\equiv t:\!\iff h(s)=h(t).$$
This also determines $\equiv$ on the limit levels
and defines an almost nice t.e.r. on $T$.
\end{proof}

As an illustration of the notion of t.e.r. and a first application
of the Representation Lemma we reformulate
Jech's construction of a simple $\kappa$-Souslin algebra,
i.e., one having no non-trivial Souslin subalgebra.
(cf.~\cite{jech_simple}). 

\begin{expl}[a simple $\kappa$-Souslin algebra]
\label{expl:simple}
We construct a Souslinization $T$ of a simple $\kappa$-Souslin algebra $\B$.
We assume that $\kappa=\mu^+$ is a
successor cardinal and $\mu^{<\mu}=\mu$ and $\dia_\kappa(\CF_\mu)$
hold\footnote{A similar construction
(which also applies to an inaccessible cardinal $\kappa$
that is not weakly compact) under an appropriate
($\square + \dia$)-assumption is of course possible but more cumbersome,
cf.~\cite[Theorem VII.1.3]{devlin} for that framework.}.
Let $(R_\nu)_{\cf(\nu)=\mu}$ be a $\dia$-sequence.

We will define a $\kappa$-normal and $\mu$-closed $\kappa$-Souslin tree
order on the set $\kappa$.
We let $0$ be the root and provide every node of $T$ with $\mu$ direct
successors such that level $T_\alpha$ consists of the ordinal interval
between\footnote{To be correct, $T_1=\mu\setminus\{0\}$,
$T_n= \mu\cdot n \setminus\mu(n-1)$ for $n\in\omega\setminus\{0,1\}$
and $T_\alpha = \mu(\alpha+1)\setminus \mu\alpha$
for all $\alpha \in \kappa \setminus \omega$.}
$\mu\cdot\alpha$ and $\mu\cdot(\alpha+1)=\mu\cdot\alpha +\mu$.

We take full limits on limit levels $\alpha$ of cofinality $<\mu$,
i.e. we extend all branches of length $\alpha$.
Thanks to our our hypothesis on cardinal arithmetics
there are only $\mu$ branches to extend,
so our tree remains $\kappa$-normal.

On limit stage $\alpha$ of cofinality $\mu$ we consider the space $[T\uhr\alpha]$
of cofinal branches through $T\uhr\alpha$
and have to choose a dense subset of cardinality $\mu$
subject to some further restrictions imposed by our
$\dia_\kappa(\CF_\mu)$-sequence $(R_\nu)_{\cf\nu=\mu}$.
If $\alpha<\mu\alpha$ then we can extend $T\uhr\alpha$ by choosing any dense
subset $Q$ of $[T\uhr\alpha]$ of size $\mu$ and extending the branches in $Q$
to $T_{\alpha+1}$.

In the case where $\alpha=\mu\alpha$
we ask the $\dia$-sequence for some information about $T\uhr\alpha$.
We let the first bit of $R_\alpha$ decide whether we care about antichains or
about t.e.r.s.
If $0\in R_\alpha$ and $A=R_\alpha\setminus\{0\}$ is a maximal antichain of
$T\uhr\alpha$ then we choose our dense subset $Q$ from the dense open set
$P=\{x\in[T\uhr\alpha]\,\mid\, (\exists t\in A) t\in x\}$ to guarantee
that $A$ is still a maximal antichain when considered as a subset of
$T_{\alpha+1}$. 

If $0\notin R_\alpha$ and if $R_\alpha$ codes a t.e.r. $\equiv$ on $T\uhr C$ 
for some club set $C\subset\alpha$ then we
want to choose the new level $T_\alpha$ in a way that destroys $\equiv$, i.e., 
the unique extension of $\equiv$ to $T_{\alpha+1}$ violates the honesty
criterion of Proposition \ref{prp:irred-dense}. 
For this consider the equivalence
relation induced by $\equiv$ on the space $[T\uhr\alpha]$ of cofinal
branches via
$$x\equiv y\quad :\iff \quad (\forall \gamma\in C)x\uhr\gamma\equiv
y\uhr\gamma.$$
The $\equiv$-classes of $[T\uhr\alpha]$ 
are closed and nowhere dense subsets of $[T\uhr\alpha]$.
If $\equiv$ is a non-trivial t.e.r. then there is certainly a $\equiv$-class
$x/\!\!\equiv$ of size $>1$.
Fix a representative $x$ of such a class.
In order to define $T_\alpha$ we choose
a dense subset $Q$ of $[T\uhr\alpha]\setminus (x/\!\!\equiv)$ of elements
not equivalent to $x$.
Then extend every branch in $Q\cup\{x\}$.
The node extending $x$
violates the conclusion of Proposition \ref{prp:irred-dense}, so the extension
of $\equiv$ to $T\uhr C\cup\{\alpha\}$ is no longer honest and therefore
no t.e.r.

If $\cf(\alpha)=\mu$ yet $R_\alpha$ neither is an antichain nor does it code
some t.e.r. on $T$, then we simply choose any dense $\mu$-subset $Q$ of
$[T\uhr\alpha]$ and extend the branches in $Q$ to level $T_\alpha$.
This finishes the recursive construction of $T$.
 
By standard $\dia_\kappa$-arguments,
the result $T$ of this construction is a $\kappa$-Souslin
tree that admits no t.e.r.\  
So by the Representation Lemma $\B=\RO T$ has no
proper and atomless complete subalgebra.
\end{expl}

Note that, while in the above construction
we explicitely talk about a \emph{non-trivial} t.e.r.,
we will from now on tacitly assume the t.e.r.s proposed by a $\dia$-sequence
not to be trivial, i.e., not to be the identity.

We close this section with a proposition on the local nature of niceness.
For this and also for later purposes,
we say that a Souslin subalgebra $\A$ is \emph{nowhere nice} in the
$\kappa$-Souslin algebra $\B$ if for every $b\in \B^+$
the relative subalgebra $b\A=\{ba\mid a\in\A\}$
is not nice in the relative algebra $\B\uhr b$.

\begin{prp}
\label{prp:local_nice}
Let $\B$ be a $\kappa$-Souslin algebra and
$\A$ a Souslin subalgebra of $\B$.
Let $b:=\sum\{x\in\B\mid x\A\text{ is nice in }\B\uhr x\}$.
Then $b\A$ is nice in $\B\uhr b$ and $(-b)\A$ is nowhere
nice in $\B\uhr(-b)$.
\end{prp}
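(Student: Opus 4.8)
The plan is to show that the set $N = \{x\in\B \mid x\A\text{ is nice in }\B\uhr x\}$ is closed under the relevant operations so that its sum $b$ inherits niceness, and that nowhere niceness below $-b$ follows from maximality. First I would record the basic localization fact: if $x\A$ is nice in $\B\uhr x$ and $0\neq y\le x$, then $y\A$ is nice in $\B\uhr y$. This is essentially immediate from the description of niceness via the projection, since $h_{\B\uhr y,\,y\A}$ is the restriction of $h_{\B\uhr x,\,x\A}$ followed by multiplication with $y$, and a Souslinization of $\B\uhr x$ whose $h$-image souslinizes $x\A$ restricts (on a club of levels, via the Restriction Lemma) to one of $\B\uhr y$ with the same property; one uses that $x\A$ nice means, by the Representation Lemma \ref{lm:representation}.c), that the nice t.e.r. representing it stays nice on a club. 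Granting this, $N$ is downward closed in $\B^+$ together with $0$, i.e. it is an ideal-like set of the form $\B\uhr b$ once we know it is closed under arbitrary sums.

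The heart of the argument is therefore: if $(x_i)_{i\in I}$ is an antichain with each $x_i\in N$ and $b=\sum_i x_i$, then $b\in N$. Here I would fix a Souslinization $S$ of $\B$; by the $\kappa$-chain condition $|I|<\kappa$, and by the Restriction Lemma together with \ref{lm:representation}.c) there is a single club $C\subseteq\kappa$ such that, writing $\equiv$ for the (unique) t.e.r. on $S\uhr C$ representing $\A$, the relativized relations $\equiv$ on $(S\uhr C)(s_i)$ — where $s_i\in S$ is chosen with $x_i = \sum S(s_i)$, shrinking $C$ so that each $x_i$ appears as such a node — are all nice in the sense of the definition, and moreover $\equiv$ on $S\uhr C$ represents $\A$ on all of $\B$. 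Now one checks directly that $\equiv$ restricted to the subtree $\bigcup_i (S\uhr C)(s_i)$, which souslinizes $\B\uhr b$ on the levels of $C$ above $\max_i\hgt s_i$, is nice: given $s<_S s'$ and $s\equiv t$ with $s,s',t$ in this subtree, all three nodes lie in a common $(S\uhr C)(s_i)$ — this is where honesty (condition (iv)) of $\equiv$ is used, to see that $t$ lies in the same $x_i$ as $s$, since $s\equiv t$ and the $x_i$ are separated in $\A$ — and then niceness of $\equiv$ on that piece supplies the required $t'>_S t$ with $s'\equiv t'$. Hence $b\A$ is nice in $\B\uhr b$, so $b\in N$ and $b$ is in fact the largest element of $N$.

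Finally, for the complement: suppose toward a contradiction that $(-b)\A$ is not nowhere nice in $\B\uhr(-b)$, i.e. there is $0\neq y\le -b$ with $y\A$ nice in $\B\uhr y$. Then $y\in N$, so $y\le b$ by maximality, whence $y\le b\cdot(-b)=0$, a contradiction. Therefore $(-b)\A$ is nowhere nice in $\B\uhr(-b)$, completing the proof.

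I expect the main obstacle to be the middle step — verifying that a single club $C$ works simultaneously for all $x_i$ and that the glued t.e.r. on $\bigcup_i(S\uhr C)(s_i)$ is genuinely nice. The delicate point is the use of honesty to confine the "partner" node $t$ to the same piece $(S\uhr C)(s_i)$ as $s$: one must argue that if $s\in(S\uhr C)(s_i)$ and $s\equiv t$ but $t\notin(S\uhr C)(s_i)$, then the classes would witness that $x_i$ is not separated from $\sum(-x_i)\A$ in $\A$, contradicting that $x_i\A\subseteq\A$ is carried by a node of the Souslinization of $\A$ induced by $\equiv$. Making this precise requires care with the correspondence between $\equiv$-classes and elements of $\A$, but it is forced by the definition of "represents $\A$" together with condition (ii) (compatibility with $<_T$) and condition (iv) (honesty).
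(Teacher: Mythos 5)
Your overall skeleton matches the paper's: show that the property ``$x\A$ is nice in $\B\uhr x$'' passes down to smaller elements and is preserved under sums of antichains, then deduce nowhere niceness below $-b$ from maximality. The downward-closure step and the final maximality argument are fine. But the central gluing step contains a genuine gap: you claim that if $s\equiv t$ with $s$ above the piece $s_i$, then $t$ must lie above the same $s_i$, ``since the $x_i$ are separated in $\A$''. This is false as stated and cannot be repaired in that form: the $x_i$ are elements of $\B$, not of $\A$, and their projections $h(x_i)$ into $\A$ may overlap or even coincide, in which case a single $\equiv$-class meets several pieces. A concrete instance sits in Example \ref{expl:prod_nice}: with $\B=\RO(S\otimes T)$ and $\A$ represented by $(s,t)\sim(u,v)\iff s=u$, take $x_0,x_1$ to be the elements determined by $(s_0,t_0)$ and $(s_0,t_1)$ for distinct $t_0,t_1$; then nodes above $x_0$ and above $x_1$ sharing their first coordinate are equivalent yet lie in different pieces. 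Honesty does not help here either --- it constrains where disputes may first appear, but nothing in the definition of a t.e.r.\ ties equivalence classes to an antichain of $\B$.

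The case you dismiss is precisely where the work lies, and it is where the paper's proof does something different. It first refines $M$ so that $h''M$ is an antichain in $\A$ (the refinement trick from the beginning of the proof of the Representation Lemma \ref{lm:representation}) and arranges $M\subseteq T_1$. Then, when $s\equiv t$ with $s$ above $r_s\in M$ and $t$ above a different $r_t\in M$, compatibility with $<_T$ gives $r_s\equiv r_t$, hence $h(r_s)=h(r_t)$; since $h(s')\le h(r_s)$ one gets $r_t\cdot h(s')>0$, so some node $t^*$ above $r_t$ is equivalent to $s'$, and niceness inside $T(r_t)$ then produces the required $t'$ above $t$ with $t'\equiv t^*\equiv s'$. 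In other words, the witness is transported across pieces rather than confined to one. Your reduction to a single club $C$ for the $<\kappa$ many pieces is unproblematic; it is only this cross-piece case that needs the two-step argument, and your proof as written does not supply it.
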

\begin{proof}
It follows directly from the definitions
that $(-b)\A$ is nowhere nice.

Clearly, the property ``$x\A$ is nice in $\B\uhr x$''
descends from $x$ to $y\leq_\B x$.
We prove that this 
property is also preserved under taking arbitrary sums.
So let $M$ be a subset of $\B$, such that all elements of
$M$ have this property.
We want to show that for $x:=\sum M$
the subalgebra $x\A$ is nice in $\B\uhr x$.
We can without loss of generality assume that $M$ is an antichain.
Then $M$ is of cardinality $<\kappa$.
Furthermore we can assume that also $h''M$ is an antichain
by the argument used at the beginning of the proof of
the Representation Lemma \ref{lm:representation}.
We finally assume
that there is a Souslinization $T$ of $B$ such that
$M$ is a subset of $T_1$, the first nontrivial level of $T$,
and $T$ carries a t.e.r. $\equiv$ which represents $\A$.

Now for every element $r$ of $M$ there is
by part c) of the Representation Lemma \ref{lm:representation}
a club $C_r$ of $\kappa$,
such that $\equiv$ is nice on $T(r)\uhr C_r$.
Let $C$ be the club intersection of all sets $C_r$ for $r\in M$.
We claim that $\equiv$ is nice on the subtree
$$S=\bigoplus_{r\in M}T(r)\uhr C$$
of $T\uhr C$.
So let $s\equiv t$ in $S$ and $s'>s$.
If there is a unique member $r$ of $M$ below both nodes $s$ and $t$,
then we can directly apply the hypothesis on $r$.
Otherwise we would still have $r_s:=s\uhr 1\equiv t\uhr 1=:r_t$
and $h(r_s)=h(r_t)$ by our assumption that $h"M$ is an antichain.
But then we have that $r_th(s')>0$.
So there is a node $t^*>r_t$ equivalent to $s'$.
Finally, by niceness above $r_t$, there also is
a node $t'$ above $t$ such that $t'\equiv t^*\equiv s'$.
\end{proof}

\section{Large subalgebras}
\label{sec:large}

Large subalgebras can be regarded as the simplest type of
subalgebras\footnote{In \cite[pp.266]{jech}
such subalgebras are called ``locally equal''
and studies in the general context of
forcing with complete Boolean algebras.}.
They are closely related to symmetries of the Souslin algebra
and admit a detailed yet clear representation.

\begin{defi}
\label{defi:large}
Let $B$ be a complete Boolean algebra.
We say that $C$ is a \emph{large subalgebra of} $B$,
if there is an antichain $M$ of $B$,
such that $\langle A\cup M\rangle^\cm=B$.
We say that a large subalgebra $A$ of $B$ is \emph{$\mu$-large} for some
cardinal $\mu$ if there is an antichain $M$ of size $\mu$ such that $\langle A\cup M\rangle^\cm=B$.
\end{defi}


Note that large subalgebras of $\kappa$-Souslin algebras
are always atomless and therefore Souslin subalgebras,
since for every atom $a$ of $\A$,
the set $M\cup \{a\}$ of size $<\kappa$
would have to generate the relative algebra $\B\uhr a$.
But this is impossible,
because $\langle M\rangle^\cm$ is itself atomic.

As a first example we consider a $\kappa$-Souslin algebra that has
exactly one non-trivial subalgebra, and this subalgebra is large.

\begin{expl}
\label{expl:only1subalg}
Let $\B$ be a simple $\kappa$-Souslin algebra, i.e.,
that $\B$ has no proper atomless and complete subalgebra,
cf. Example \ref{expl:simple}.

We claim that the $\kappa$-Souslin algebra
$\C:=\B\times\B$ has exactly one proper atomless and complete
subalgebra, which is furthermore 1-large in $\C$.

Clearly, $\C$ has the large subalgebra
$$\A:=\{(b,b)\mid b\in\B\},$$
and $\A$ is 1-large in $\C$, because
$\langle \A\cup\{(1,0)\}\rangle^\cm = \C$.
As we have $\A\cong\B$, there are no (atomless and complete)
subalgebras of $\C$ below $\A$.

On the other hand we have
$$\C\uhr(1,0)\cong\C\uhr(0,1)\cong\B.$$
So if there was any other atomless and
complete subalgebra $\A'$ of $\C$,
then $(0,1)\cdot\A'$ or $(1,0)\cdot\A'$
would be a nontrivial subalgebra of the
respective relative algebra of $\C$.
But the latter are simple.
So the existence of such a subalgebra $\A'$ is impossible.
\end{expl}

In general, ($2^{\aleph_0}$-)large subalgebras always occur
whenever a $\kappa$-Souslin algebra has non-trivial symmetries.

\begin{thm}\label{thm:FPlarge}
Let $\B$ be a $\kappa$-Souslin algebra and $\varphi\in\Aut\B$.
Then the set of fixed points of $\varphi$ is a large subalgebra
$\A$ of $\B$.
In particular, if $\kappa > 2^{\aleph_0}$ then $\A$ is $2^{\aleph_0}$-large.
\end{thm}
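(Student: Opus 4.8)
The plan is to produce an antichain $M$ in $\B$ witnessing largeness of the fixed-point algebra $\A = \{b \in \B \mid \varphi(b) = b\}$; that is, an antichain with $\langle \A \cup M \rangle^{\cm} = \B$. First I would check that $\A$ is indeed a complete subalgebra: it is clearly closed under complements and (by completeness of $\B$ and the fact that $\varphi$, being an automorphism of a complete algebra, commutes with arbitrary sums and products) under arbitrary sums and products, so $\A$ is a complete — hence regular — subalgebra of $\B$. The substance of the theorem is finding the generating antichain.

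The natural source of such an antichain is a maximal antichain $M$ among the elements $b \in \B^{+}$ satisfying $b \cdot \varphi(b) = 0$, i.e. elements moved to a disjoint position by $\varphi$. By Zorn's lemma pick $M$ maximal with this property; it is an antichain of size $< \kappa$ by the $\kappa$-chain condition. Set $A_0 = \langle \A \cup M \rangle^{\cm}$. I claim $A_0 = \B$. Suppose not; then there is $c \in \B^{+}$ with $h_{\B, A_0}(-c) \neq 1$, equivalently a nonzero $d \leq -h(c)$... more cleanly: if $A_0 \neq \B$, pick $b \in \B^{+} \setminus A_0$ and consider $b$ together with $\varphi(b)$. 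The key dichotomy is: either $b \cdot \varphi(b) = 0$, in which case $b$ is compatible with adding to $M$, contradicting maximality (after passing to a piece of $b$ disjoint from every element of $M$, which exists since $b \notin A_0 \supseteq \langle M \rangle^{\cm}$ would otherwise force $b \leq \sum M$ and hence $b$ decomposes over $M$), or $b \cdot \varphi(b) \neq 0$. In the latter case one iterates: on the fixed part the automorphism fixes more, and a standard argument (essentially Kuratowski's, that any bijection with finite orbits decomposes into three fixed-point-free involutions, or here: below any element the orbit structure of $\varphi$ lets one split off a "$\varphi$-free" piece) lets one extract a nonzero element disjoint from its image below $b$ unless $\varphi$ already fixes a nonzero piece of $b$ outside $A_0$ — but fixed elements lie in $\A \subseteq A_0$.

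So the main work, and the main obstacle, is the combinatorial lemma: below any $b \in \B^{+}$ with $b \notin \A$, there is a nonzero $b' \leq b$ with $b' \cdot \varphi(b') = 0$. I would prove this by considering, for $b \notin \A$, the element $b \cdot (-\varphi(b))$; if this is nonzero we are done, so assume $b \leq \varphi(b)$, hence (applying $\varphi$ and using that $\varphi$ is order-preserving) $b \leq \varphi(b) \leq \varphi^2(b) \leq \cdots$; by completeness $b^{*} = \sum_n \varphi^n(b)$ is a $\varphi$-invariant element $\geq b$, and dually one traps $b$ between invariant elements, reducing to the case where $\varphi$ acts on $\B \uhr b^{*}$ — then run the same argument with $\varphi^{-1}$ and intersect. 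Iterating this transfinitely and using the $\kappa$-c.c. to bound the process, one either reaches a contradiction with $b \notin \A$ or exhibits the desired $b'$; this is where care is needed. Once $\langle \A \cup M \rangle^{\cm} = \B$ is established, the final sentence is immediate: $M$ has size $\leq 2^{\aleph_0}$ because an antichain of pairwise-$\varphi$-disjoint elements injects into... in fact $|M| < \kappa$ always, so if $\kappa > 2^{\aleph_0}$ one may further refine or simply note $|M| \leq 2^{\aleph_0}$ from Solovay's cardinality bound on the ambient algebra — but more directly, $\A$ being $|M|$-large with $|M| < \kappa$, and $\kappa$-Souslin algebras (as considered here) having size $2^{<\kappa}$, one gets $|M| \leq 2^{\aleph_0}$ when $\kappa > 2^{\aleph_0}$; padding $M$ with a partition of a single element into $2^{\aleph_0}$ pieces inside $\A$ makes it exactly $2^{\aleph_0}$-large if desired.
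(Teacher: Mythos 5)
There is a genuine gap at the central step. Your plan is to take a maximal antichain $M$ among the elements $b$ with $b\cdot\varphi(b)=0$ and to argue that $\langle\A\cup M\rangle^{\cm}=\B$. The auxiliary lemma you isolate (below any $b\notin\A$ there is a nonzero $b'\leq b$ with $b'\cdot\varphi(b')=0$) is true, and in fact has a two-line proof: if $b-\varphi(b)\neq 0$ take $b'=b-\varphi(b)$; otherwise $b<\varphi(b)$ and $b'=b-\varphi^{-1}(b)$ works, since then $\varphi(b')=\varphi(b)-b$ is disjoint from $b\geq b'$. But maximality of $M$ only tells you that $\varphi$ is the identity below $-\sum M$; it gives no control over the higher powers $\varphi^n$ on the pieces $\B\uhr x$ for $x\in M$, and that control is exactly what is needed to recover an arbitrary $b\leq x$ as $x\cdot a$ for a fixed point $a$. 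Concretely, let $\C$ be a simple rigid $\kappa$-Souslin algebra, $\B=\C^4$, and $\varphi$ the cyclic shift of coordinates, so that $\A$ is the diagonal. Then $M=\{(1,0,1,0),(0,1,0,1)\}$ is a partition of unity consisting of elements disjoint from their $\varphi$-images, hence maximal in your sense, yet $\langle\A\cup M\rangle^{\cm}\subseteq\{(a,b,a,b)\mid a,b\in\C\}\neq\B$, because $\varphi^2$ fixes each member of $M$ without being the identity below it. So the antichain you construct need not witness largeness, and the transfinite iteration you sketch cannot repair this, since the defect already occurs for a single automorphism of order $4$.

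The paper avoids this by applying Frol\'ik's Theorem to every power $\varphi^n$ ($n\in\mathbb{Z}$) simultaneously: $M$ is taken to be the set of atoms of the complete subalgebra generated by all the resulting partition elements together with their $\varphi$-translates. This guarantees that for each $x\in M$ and each $n$, either $\varphi^n$ is the identity on $\B\uhr x$ or $\varphi^n(x)\cdot x=0$; then $a=\sum_n\varphi^n(b)$ is a fixed point with $ax=b$ for any $b\leq x$. That subalgebra is countably generated, hence atomic by $(\kappa,\infty)$-distributivity with at most $2^{\aleph_0}$ atoms, which is also where the $2^{\aleph_0}$-largeness bound comes from; your derivation of that bound (from $|M|<\kappa$ or from Solovay's cardinality estimate) does not yield $|M|\leq 2^{\aleph_0}$.
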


\begin{proof}
We use Frol\'ik's Theorem, a deep result from
the theory of complete Boolean algebras
(cf. \cite[Theorem 13.23]{koppelberg}):
For every automorphism $f$ of a complete Boolean algebra $A$,
there is a partition of unity $\{a_0,a_1,a_2,a_3\}$ in $A$
such that $f\uhr(A\uhr a_0)$ is the identity and
for $i>0$ we have $f(a_i)\cdot a_i=0$.

We consider the at most countable family
$(\varphi^n\mid n\in\mathbb{Z})$ of automorphisms of $\B$
and let $(a_{n0},a_{n1},a_{n2},a_{n3})$ be
a partition of unity given by Frol\'ik's Theorem
for $\varphi^n$, $n\in\mathbb{Z}$.
Let $M$ be the set of atoms of the complete subalgebra of $\B$
that is (completely) generated
by the elements $\varphi^{k}(a_{ni})$ for $k,n\in\mathbb{Z}$ and $i<4$.
Then $M$ has by distributivity of $\B$ at most $2^{\aleph_0}$ elements.
Note that $\varphi\uhr M$ is a permutation of $M$
and if for some $x\in M$ and $n\in\omega$
we have $\varphi^n(x)=x$,
then the restriction of $\varphi^n$ to $\B\uhr x$ is the identity map.

We claim that $\langle\A\cup M\rangle^\cm=\B$.
Since $M$ is an antichain, it suffices to show
that for all $x\in M$ and $b\in\B\uhr x$
there is a member $a\in\A$, i.e., a fixed point of $\varphi$, with $ax=b$.
For all integers $n$ we know
that either $\varphi^n(b)=b$ or
$\varphi^n(b)$ is disjoint from $x$.
Let $a=\sum\{\varphi^n(b)\mid n\in\mathbb{Z}\}$ and it is easy to check that
the proof is finished.
\end{proof}
Note that the algebra $\C$ from Example \ref{expl:only1subalg} has exactly
two automorphisms: the identical mapping and flipping of coordinates.

The following  technical lemma states the existence of
\emph{optimal witnesses of largeness}.
With these witnesses at hand
we can easily deduce the main structural properties of large subalgebras.
\begin{lm}
\label{lm:dec_lar_sub}
Let $\A$ be a $\mu$-large subalgebra of the $\kappa$-Souslin algebra $\B$.
Define $X:=\{x\in\B\mid\B\uhr x = x\A\}$.
\begin{itemize}
\item[a)] The set $X$ is dense in $\B$, and $x<_\B y\in X$ imply $x\in X$.
\item[b)] For every $x\in X$ the restriction of the canonical projection
$h:\B\to\A$ to $\B\uhr x$, i.e. the map
$$ \varphi:\B\uhr x \to \A\uhr h(x),\quad b\mapsto h(b)$$
is an isomorphism between $\B\uhr x$ and $\A\uhr h(x)$.
The inverse map of $h\uhr(\B\uhr x)$ is given by multiplication with $x$:
$$\varphi^{-1}:\A\uhr h(x)\to \B\uhr x,\quad a\mapsto ax.$$
\item[c)] Every subset $M\subseteq X$ with $\sum M=1$ (or even $1-\sum M\in X$)
witnesses that $\A$ is large.
\item[d)] For every $x\in X$ there is a maximal element $y$ of $X$ above $x$.
\end{itemize}
If additionaly $\A\uhr a\neq\B\uhr a$ for all $a\in\A^+$ and
$Y$ denotes the set of maximal elements of $X$, then the following hold as well.
\begin{itemize}
\item[e)] The image of $Y$ under $h$ is a maximal antichain of $\A$.
\item[f)] Every set of pairwise disjoint elements of $Y$
is extendible to a maximal antichain $\subset Y$ of $\B$.
\item[g)] For every maximal antichain $M\subseteq Y$ we have $h''M=h''Y$.
\end{itemize}
\end{lm}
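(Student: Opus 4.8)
The plan is to work through the eight items roughly in the stated order, since each one feeds the next. For (a), density of $X$ follows directly from Definition \ref{defi:large}: pick an antichain $M$ with $\langle\A\cup M\rangle^\cm=\B$; for any $b\in\B^+$ there is $m\in M$ with $bm\neq0$, and then by chasing the complete generation one finds $x\leq_\B bm$ with $\B\uhr x=x\A$ (this is the standard argument that below a witnessing antichain element the subalgebra is locally everything). Downward closure $x<_\B y\in X\Rightarrow x\in X$ is immediate: if $\B\uhr y=y\A$ then $\B\uhr x=x\B\uhr y=xy\A=x\A$. For (b), recall from the Preliminaries that $h=h_{\B,\A}$ restricted to $\B\uhr x$ always gives an isomorphism $bA\to A\uhr h(b)$ with inverse multiplication by $b$, applied here with $b=x$; the only extra input is $\B\uhr x=x\A$, which says precisely that $bA=\B\uhr b$ in that discussion, so $\varphi$ is defined on all of $\B\uhr x$.

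Item (c) is a small computation: if $M\subseteq X$ with $\sum M=1$, then for any $b\in\B$ we have $b=\sum_{m\in M}bm$ and each $bm\in\B\uhr m=m\A\subseteq\langle\A\cup M\rangle^\cm$, hence $b\in\langle\A\cup M\rangle^\cm$; for $1-\sum M\in X$ just add that one element to $M$. Item (d) is the first place needing a genuine argument: given $x\in X$, consider the set of $y\in X$ with $y\geq_\B x$ and take a maximal antichain $N$ in $\B\uhr x$ refining... actually cleaner: among all $z\in X$ the sups of chains through $x$ in $X$ stay in $X$ by (a)'s downward-closure turned upward via the $\kappa$-chain condition — one shows the supremum $y=\sum\{z\in X\mid z\geq_\B x\}$ itself lies in $X$ by writing the relevant chain as a $<\kappa$-antichain of increments and applying (c) locally below $y$. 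Zorn then gives a maximal element, but in fact $y$ as just constructed is already the maximum above $x$. I expect (d) to be the main technical obstacle, precisely because "$X$ closed under the relevant suprema" is where the $\kappa$-chain condition and the local-everything property must be combined carefully.

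For the last three items assume $\A\uhr a\neq\B\uhr a$ for all $a\in\A^+$, and let $Y$ be the maximal elements of $X$. For (e): by (d), $Y$ is dense in $\B$ below nothing above it, so the $h''Y$ are pairwise-comparable-or-disjoint; maximality of each $y\in Y$ plus the hypothesis $\A\uhr a\neq\B\uhr a$ forces $h''Y$ to be an antichain (if $h(y)\leq h(y')$ with $y\neq y'$ one builds a strictly larger element of $X$ above $y$, contradiction), and density of $X$ makes $\sum h''Y=1$, so $h''Y$ is a maximal antichain of $\A$. For (f): given pairwise disjoint $D\subseteq Y$, the element $1-\sum D$ can, by density of $X$ and (d), be partitioned into maximal elements of $X$, i.e.\ elements of $Y$, yielding a maximal antichain $\subseteq Y$ extending $D$. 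For (g): if $M\subseteq Y$ is a maximal antichain of $\B$ then $\sum M=1$, so for any $y\in Y$ we get $y=\sum_{m\in M}ym$; using part (b) (the isomorphism $\varphi$ on $\B\uhr y$) together with the fact that each $ym\neq0$ lies in some $\B\uhr m=m\A$, one computes $h(y)=\sum_{m\in M}h(ym)$ and $h(ym)\leq h(m)$, giving $h''Y\subseteq$ the ideal generated by $h''M$; combined with (e) applied to both $M$ and $Y$ (both maximal antichains contained in $Y$) and the antichain property of $h''Y$ and $h''M$, this forces $h''M=h''Y$.
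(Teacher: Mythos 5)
Parts a)--c) are fine and agree with what the paper treats as routine, but your argument for d) has two genuine defects. First, the element $y=\sum\{z\in X\mid z\geq_\B x\}$ need not lie in $X$, so there is in general no \emph{maximum} of $X$ above $x$, only maximal elements. Concretely, in Example \ref{expl:only1subalg} (the product of a simple algebra with itself, with the diagonal as the $1$-large subalgebra) one computes $X=\{(u,v)\mid uv=0\}$; above $x=(b,0)$ with $0<b<1$ the set $X$ contains both $(1,0)$ and $(b,-b)$, whose sum $(1,-b)$ is not in $X$. Second, the route you propose for closure of $X$ under suprema of increasing chains --- write the chain as a $<\kappa$-antichain of increments and ``apply c) locally below $y$'' --- cannot work: applying c) to the antichain $M$ of increments inside $\B\uhr y$ only yields $\langle y\A\cup M\rangle^\cm=\B\uhr y$, i.e.\ that $y\A$ is \emph{large} in $\B\uhr y$, which is strictly weaker than $y\in X$ (the same example shows that an antichain contained in $X$ can sum to an element outside $X$). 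What is actually needed, and what the paper proves, is that each member $x_\alpha$ of the chain already belongs to $y\A$: choose $a_\beta\in\A$ with $x_\beta a_\beta=x_\alpha$ for every $\beta>\alpha$ (possible since $x_\alpha\in\B\uhr x_\beta=x_\beta\A$), put $a=\prod_{\beta>\alpha}a_\beta$, observe $x_\alpha\leq_\B a_\beta$ for all $\beta$, and use the distributive law to get $ya=\sum_\beta x_\beta a=x_\alpha$. Only after the $x_\alpha$ (hence the increments) are known to lie in $y\A$ does the decomposition of an arbitrary $w\leq_\B y$ along the increments land in $y\A$ and give $y\in X$; Zorn's Lemma then produces maximal, not maximum, elements.

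In e) the assertion that the elements of $h''Y$ are ``pairwise comparable or disjoint'' is unsupported --- density of $Y$ gives nothing of the sort --- and it is also an unnecessary detour. The paper argues directly: if $x,y\in Y$ with $h(x)h(y)>_\B 0$ and $h(x)\neq h(y)$, then without loss of generality $h(x)-h(y)>_\B 0$, hence $x-h(y)>_\B 0$, and $z:=y+(x-h(y))$ is a strictly larger element of $X$ above $y$ (here $y=zh(y)$ and $z-y\leq_\B x$ together witness $z\in X$), contradicting maximality of $y$. Your parenthetical gestures at such a construction but only in the comparable case, so the general overlapping case is not covered. The remaining items are as routine as you suggest once d) and e) are in place.
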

The announced optimal witnesses of largeness are simply the partitions of
unity that are subsets of the set $Y$ defined in the lemma.
\begin{proof}
We only give proofs of points c-e).
The rest is then trivial or follows by standard arguments.

For the proof of c) pick a subset $M\subset X$ with $\sum M=1$.
We want to show that every $b\in\B^+$ is of the form
$$b'=\sum\{xh(bx)\mid \,\, x\in M,\,\, xb>_\B 0\}.$$
It is clear that $b'\geq_\B b$, because $h(b),\sum M\geq b$.
On the other hand we conclude from part b)
that $xh(bx)=bx$ for $x\in X$, so $b'\leq_\B b$ as well.
So we have $\langle\A\cup M\rangle^\cm=\B$.

To prove the existence of maximal elements of $X$,
it is enough to verify that
$X$ is closed under taking sums over increasing sequences of length $<\kappa$.
So let $x_\alpha\in X$ and $x_{\alpha+1}>_\B x_\alpha$ for all
$\alpha<\delta\,(<\kappa)$.
Set $x=\sum x_\alpha$.
We prove that every $x_\alpha$ is in $x\A$ as follows.
Fix $\alpha$.
For every $\beta>\alpha$ pick an element $a_\beta\in\A$ that satisfies
$x_\beta a_\beta=x_\alpha$.
Defining $a=\prod_{\beta>\alpha} a_\beta$
we get $x_\beta a=x_\alpha$ for all $\beta>\alpha$
and therefore
(using the infinite distributive law available in $\B$)
$$xa=\sum x_\beta a=x_\alpha .$$
But then we already have $x\A = \B\uhr x$,
because every element $y\in\B\uhr x$
can be decomposed into a sum
$$y=\sum_{\alpha<\nu} y_\alpha\quad
\text{ with }y_\alpha:=y(x_{\alpha+1}-x_\alpha).$$
By the same argument as above we have
$y_\alpha\in x\A$ for all $\alpha<\delta$.

Concerning the proof of part e) of the lemma,
we know by a) and d) that $1=\sum Y$ and therefore $1=\sum h''Y$.
It remains to show
that for all pairs $x,y\in Y$ with $h(x)h(y)>_\B 0$ we have $h(x)=h(y)$.
To reach a contradiction
we assume the existence of a pair $x,y\in Y$ with a non-empty intersection
of the $h$-images, $h(x)h(y)>_\B 0$, yet $h(x)-h(y)>_\B 0$.
This implies $x-h(y)>_\B 0$,
for otherwise $h(x)h(y)=0$.
We set
$$z:=y+(x-h(y))$$
and get that $z >_\B y$ and $zh(y) = y$.
This shows that $y,\,z-y \in z\A^+$ and implies thus that
$z\in X$ (because $z-y<_\B x$ so $z-y\in X$),
contradicting the maximality of $y$ in $X$.
\end{proof}

We are prepared to state and prove the key properties of large subalgebras.
Figure \ref{fig:dec_large} below
corresponds to part c) of the theorem in terms of Souslinizations
and illustrates the strong resemblance
between superalgebra and large subalgebra.

\begin{figure}[b!]
\begin{center}
\input{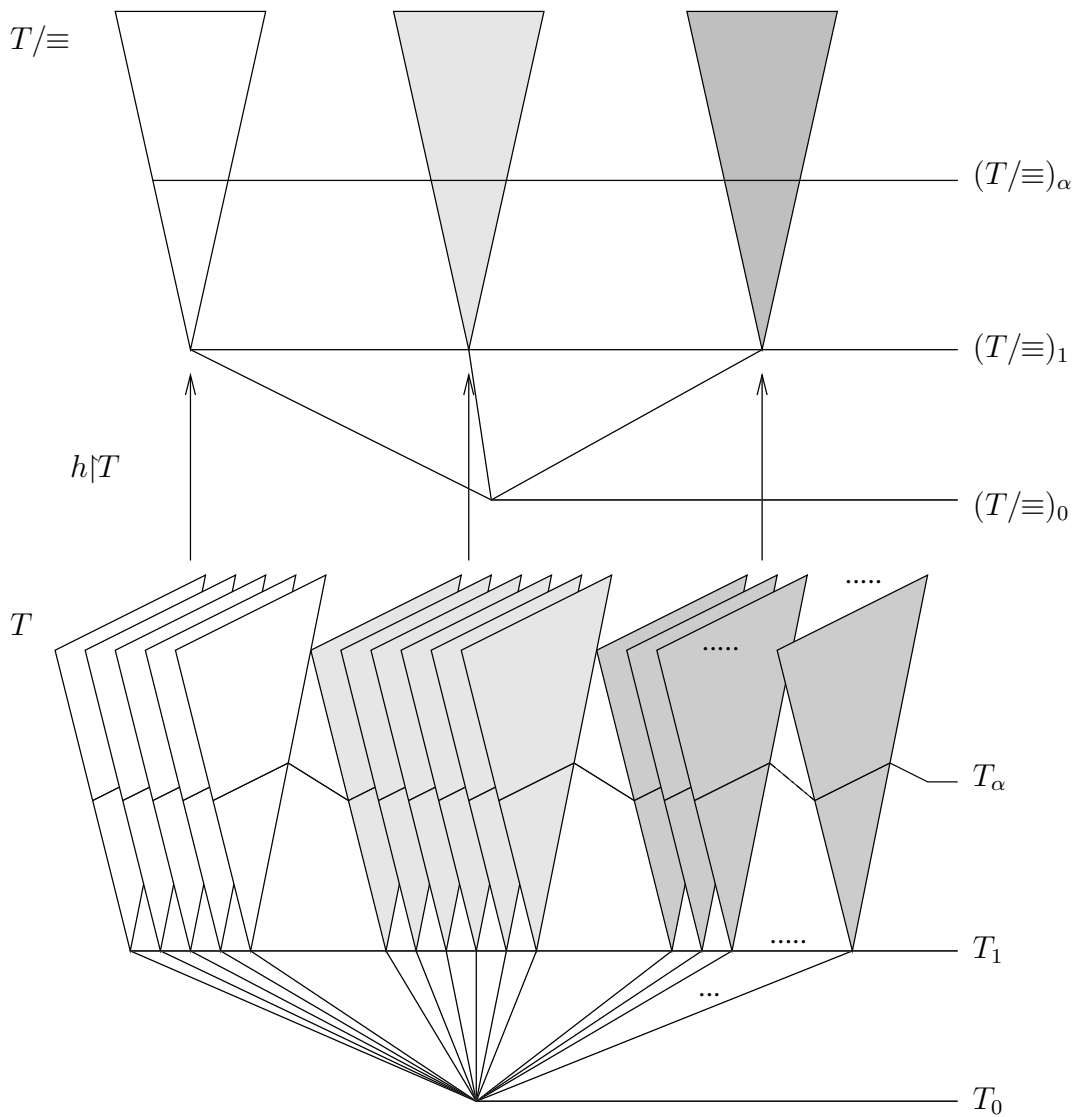_t}
\caption{Nice representation
of a large subalgebra $\A=\RO(T/\!\!\equiv)$ (above)
of $\B=\RO T$ (below).
The algebras and their Souslinizations
can be decomposed in relative parts (here there are $3$),
such that the $\B$-part consists of a Cartesian product (resp. a tree sum)
of the corresponding $\A$-part.
The factors/tree summands of the $\B$-part are interconnected by
the isomorphism coming from the projection $h$}
\label{fig:dec_large}
\end{center}
\end{figure}

\begin{thm}
\label{thm:large}
Let $\A$ be a large subalgebra of the $\kappa$-Souslin algebra $\B$.
Then the following hold.
\begin{enumerate}[a)]
\item $\A$ is a nice subalgebra of $\B$.
\item There is a group $G$ of size less than $\kappa$
of automorphisms of $\B$ such that
$$\A=\{x\in\B\mid\,(\forall \varphi\in G)\,\varphi(x)=x\}.$$
If furthermore $\mu$ is the minimal cardinal such that
$\A$ is $\mu$-large in $\B$,
then $G$ can be chosen of size $\aleph_0 \cdot \mu$.
\item There are a maximal antichain $N$ in $\A$ and a map $f$ associating a
  cardinal $f(a)$ to each member $a$ of $N$ such that 
we have the following representation of $\B$ over $\A$:
$$\B\cong\prod_{a\in N}(\A\uhr a)^{f(a)}.$$
\item If $\B$ is homogeneous, then $\A$ and $\B$ are isomorphic.
\end{enumerate}
\end{thm}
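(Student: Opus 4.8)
The plan is to establish part~(c) first — it carries all the structure — and then to read off (a), (b) and (d) from it. It is convenient to assume throughout that $\A\uhr a\neq\B\uhr a$ for every $a\in\A^+$. The general case reduces to this one: the set $\{a\in\A\mid\A\uhr a=\B\uhr a\}$ is downward closed and, using regularity and the distributive law of $\B$, closed under sums of fewer than $\kappa$ elements, so for $a^*:=\sum\{a\in\A\mid\A\uhr a=\B\uhr a\}$ one has $\B\uhr a^*=\A\uhr a^*$, while on $\B\uhr(-a^*)$ the subalgebra $\A\uhr(-a^*)$ is again large and now satisfies the assumption; the conclusions for $\B$ are then obtained by gluing the two parts.

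For (c) I would invoke the ``optimal witnesses'' of Lemma~\ref{lm:dec_lar_sub}. Let $X=\{x\in\B\mid\B\uhr x=x\A\}$ and let $Y$ be its set of maximal elements. By parts (a) and (d) of that lemma $\sum Y=1$, so by (f) we may fix a maximal antichain $M\subseteq Y$ of $\B$; by (e) and (g), $N:=h''M=h''Y$ is a maximal antichain of $\A$. Put $M_a:=\{y\in M\mid h(y)=a\}$ and $f(a):=|M_a|$ for $a\in N$; since $y\leq h(y)$ for $y\in X$ one readily checks $\sum M_a=a$, and $|M_a|,|N|<\kappa$ by the chain condition. By Lemma~\ref{lm:dec_lar_sub}(b) the projection $h$ restricts to an isomorphism $\B\uhr y\cong\A\uhr h(y)=\A\uhr a$ for each $y\in M_a$, with inverse $c\mapsto cy$; hence
$$\B\;\cong\;\prod_{a\in N}\B\uhr a\;\cong\;\prod_{a\in N}\ \prod_{y\in M_a}\B\uhr y\;\cong\;\prod_{a\in N}(\A\uhr a)^{f(a)} .$$
Finally, computing $h$ on the pieces ($h(cy)=c$ for $c\in\A\uhr a$, $y\in M_a$) shows that under this isomorphism the inclusion $\A\hookrightarrow\B$ corresponds to the diagonal $\prod_{a\in N}\A\uhr a\hookrightarrow\prod_{a\in N}(\A\uhr a)^{f(a)}$, which is exactly what makes this a representation of $\B$ \emph{over} $\A$.

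Parts (a) and (b) are then coordinate bookkeeping on the decomposition from (c). For (a): choose a Souslinization $S^a$ of $\A\uhr a$ for each $a\in N$; the tree $T$ that joins, over all $a\in N$, $f(a)$ copies of $S^a$ at a common root (iterating the operation $\oplus$) is a $\kappa$-Souslin tree — a tree sum of fewer than $\kappa$ many $\kappa$-Souslin trees — and, by (c), a Souslinization of $\B$. The equivalence on $T$ identifying two nodes when they lie over the same node of the same $S^a$, regardless of which copy, has quotient tree $\bigoplus_{a\in N}S^a$, a Souslinization of $\A$, and is a t.e.r.\ without disputes, hence a \emph{nice} t.e.r.\ with $\pi_\equiv=h\uhr T$ representing $\A$; so $\A$ is nice in Jensen's sense. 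For (b): for each $a\in N$ let $G_a$ be the group of finitely supported permutations of $f(a)$, acting on the factor $(\A\uhr a)^{f(a)}$ by permuting coordinates and trivially on the other factors, and let $G:=\bigoplus_{a\in N}G_a\leq\Aut\B$. Transpositions force coordinates to be equal, so the fixed-point algebra of $G$ is precisely the diagonal, i.e.\ $\A$; and $|G|<\kappa$ since $\kappa$ is regular and $|N|,|G_a|<\kappa$. The sharper bound $\aleph_0\cdot\mu$ follows by identifying the least witness size $\mu$ with the total number of ``surplus copies'' $\sum\{\,f(a)\ominus1\mid a\in N,\ f(a)\geq2\,\}$ (where $n\ominus1$ is $n-1$ for finite $n$ and $n$ for infinite $n$): a routine lower bound shows no smaller antichain witnesses largeness, and the corresponding $G_a$ are generated by that many transpositions — a calculation I would confine to a remark.

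For (d), suppose $\B$ is homogeneous. Then for $a\in N$ and any $y\in M_a$ we get $\A\uhr a\cong\B\uhr y\cong\B$, so by (c) and the decomposition of $\A$ along its maximal antichain $N$, $\A\cong\prod_{a\in N}\A\uhr a\cong\B^{|N|}$. But $\B^{|N|}\cong\B$: $\B$ is atomless with a maximal antichain of size $|N|$ (it has $M$, of size $\geq|N|$, which may be coarsened when $|N|$ is infinite), and each of the resulting $|N|$ relative algebras is isomorphic to $\B$ by homogeneity. Gluing back the trivial part $\B\uhr a^*=\A\uhr a^*\cong\B$ still gives $\A\cong\B$. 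The only real obstacle I anticipate is bookkeeping rather than conceptual: one must keep the decomposition of (c) compatible with the inclusion $\A\hookrightarrow\B$ throughout — the diagonal picture, which rests on pinning down $h$ on the pieces $\B\uhr y$ via Lemma~\ref{lm:dec_lar_sub}(b) — and the cardinal arithmetic behind the $\aleph_0\cdot\mu$ refinement of (b) needs care.
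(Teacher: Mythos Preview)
Your proof is correct and follows essentially the same route as the paper: both rest on Lemma~\ref{lm:dec_lar_sub} to obtain the optimal witness $M\subseteq Y$ with $N=h''M$, derive the product decomposition~(c) from the isomorphisms $\B\uhr y\cong\A\uhr h(y)$, and read off (a), (b), (d) from it --- the paper just treats the parts in the stated order rather than pivoting on (c), builds the nice Souslinization in (a) level by level rather than as your tree sum of copies of the $S^a$, and writes $G$ via the generating transpositions $\varphi_{ab}$ rather than as finitely supported permutations (the same group). Your explicit reduction to the case $\A\uhr a\neq\B\uhr a$ for all $a\in\A^+$ is a point the paper glosses over (it silently uses parts (e)--(g) of Lemma~\ref{lm:dec_lar_sub}, which require that hypothesis).
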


\begin{proof}
Let $h:\B\to\A$ be the canonical projection. To prove a) we 
construct a Souslinization of $T$ such that $h''T$ souslinizes $\A$.
Let $T_1$, the first non-trivial level of $T$,
be a maximal antichain of $\B$ consisting of maximal
elements $x$ with $x\A=\B\uhr x$,
i.e., $T_1\subset Y$ in the notation used above.
Then $N:=h''T_1$ is an antichain in $\A$.
Now fix a pre-image $b_a\in h^{-1}(a)\cap T_1$ for each $a\in N$.
To construct the higher successor levels,
we first refine the nodes above $b_a$
for each $a\in N$ and then copy these refinements by virtue of the
isomorphisms
$$\psi_{b}:\B\uhr b_a\to\B\uhr b,\,x\mapsto bh(x)$$
for all $b\in h^{-1}(a)\cap T_1$.
This automatically transfers to limit levels and guarantees
that also for limit $\alpha$ the set $h''T_\alpha$
consists of products over cofinal branches in $T\uhr\alpha$.

Finally, in order to prove that the relation
$$s\equiv t:\iff \hgt s=\hgt t\text{ and }h(s)=h(t)$$
is a nice t.e.r., let $s\equiv t$ on level $T_\alpha$
and let $s'$ be a $T$-successor of $s$.
Then the node $t'=(t\uhr 1)\cdot h(s')$
is the witness for this instance of niceness.


For the proof of b) let $A\subset Y$ be a maximal antichain in $\B$
such that
$\langle \A \cup A \rangle^\cm = \B$.
For $a,b \in A$ with $h(a) = h(b)$ let
$$\varphi_{ab}:\B\to\B,\quad
x \mapsto x - (a+b) + a \cdot h(bx) + b \cdot h(ax),$$
which is a self-inverse automorphism of $\B$
interchanging $a$ with $b$.
The fixed points of $\varphi_{ab}$ form the subalgebra
$$\B\uhr(-(a+b)) \cup (a+b)\A.$$
Letting $G$ be the group
of automorphisms of $\B$ generated by the set
$$\{ \varphi_{ab}\mid\ a,b \in A,\, h(a) = h(b) \}$$
we see that this is as stated in the theorem.

Part d) readily follows from part c) which we prove now.
Let $A\subset Y$ be as above and set $N := h"A = h"Y$.
Define $f:N \to \kappa$ by $f(a) := |h^{-1}(a) \cap A|$.
Taking into account that for each $b \in a$ we have that
$$\B\uhr b = b\A \cong \A \uhr h(b)$$
we get as a Cartesian product
$$\B \cong \prod_{b \in A} \B \uhr b \cong \prod_{b \in A} \A \uhr h(b)
\cong \prod_{a \in N} (\A \uhr a)^{f(a)}.$$
This finishes the proof.
\end{proof}

\section{Nowhere large subalgebras}
\label{sec:nowhere_large}

We now consider more general algebras
with more involved representation features.

\begin{defi}\label{defi:inice}
Let $\B$ be a $\kappa$-Souslin algebra,
$T$ be a Souslinization of $\B$ and $\A$ a complete subalgebra of $\B$.
\begin{enumerate}[a)]
\item $\A$ is \emph{nowhere large} (in $\B$) if for all $b\in\B^+$
we have $b\A\neq\B\uhr b$.
\item A t.e.r. $\equiv$ on $T$ is \emph{$\mu$-nice}
(for a cardinal $\mu<\kappa$)
if it is nice and for all $\alpha<\beta<\hgt(T)$ and
$$(\forall r\in (s\uhr\alpha)/\!\!\equiv)\quad
|\{t\in s/\!\!\equiv\mid t\uhr\alpha=r\}| \, \geq \, \mu \, ,$$
i.e., 
for all $s\in T_\beta$,
the projections $t\mapsto t\uhr\alpha$,
when restricted to the $\equiv$-class of $s$,
are $(\geq\!\mu)$-to-one.
\item $\A$ is \emph{$\infty$-nice} in $\B$ if
for one/any cardinal $1<\mu<\kappa$ there is a club $C$ of $\kappa$,
such that $T\uhr C$ carries
an $\mu$-nice t.e.r. $\equiv$ that represents $\A$.
\end{enumerate}
\end{defi}

Note that in point c) one cardinal $\mu$ suffices as
an easy argument shows that a $2$-nice t.e.r. on a $\kappa$-Souslin tree
can be turned into a $\mu$-nice t.e.r. for any $\mu<\kappa$
by concentrating the tree on a club set of levels.

\begin{rem}
\label{rem:local_large}
If $\A$ is any atomless complete subalgebra of the $\kappa$-Souslin algebra
$\B$, then obviously we have for 
$$ x := \sum\{ b \in \B\,\mid\, b\A = \B\uhr b \}$$
that $x\A$ is large in $\B \uhr x$
while $(-x)\A$ is nowhere large in $\B\uhr(-x)$.
This corresponds to the situation for niceness as stated in
Proposition \ref{prp:local_nice}.
Note that, if a nice t.e.r. $\equiv$ for the large portion $x\A$
as in the last section is found,
then the $\equiv$-classes on limit levels are discrete (above $x$).
\end{rem}

Before we give a first example of an $\infty$-nice subalgebra,
we turn to clarify the interrelationship between the new notions.

\begin{prp}
Let $\A$ be a nice subalgebra of the $\kappa$-Souslin algebra $\B$.
Then $\A$ is $\infty$-nice if and only if it is nowhere large.
\end{prp}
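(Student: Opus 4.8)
The plan is to prove the two implications separately, the backward direction being the substantial one.

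For the forward direction (\emph{$\infty$-nice $\Rightarrow$ nowhere large}), I would argue contrapositively. Suppose $\A$ is not nowhere large in $\B$. By Remark \ref{rem:local_large} there is some $b\in\B^+$ with $b\A=\B\uhr b$. If $\A$ were $\infty$-nice, witnessed by an $\mu$-nice t.e.r.\ $\equiv$ on some $S\uhr C$ (where $S$ souslinizes $\B$), then I would look at a node $s$ of $S\uhr C$ that lies below $b$ in the Boolean order — i.e.\ $s\leq_\B b$. Since $b\A=\B\uhr b$, the sum $\sum s/\!\!\equiv$ (which lies in $\A$ and is $\leq_\B b$) must already be an element of $\B\uhr b$ realized by multiplying $b$ with an element of $\A$; more to the point, the classes of $\equiv$ below $b$ collapse, because on $\B\uhr b$ the subalgebra $b\A$ is everything, so the canonical projection is the identity there and the nice t.e.r.\ (which agrees with $h\uhr S$ by the remark after the definition of niceness) has singleton classes below $b$. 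But $\mu$-niceness with $\mu>1$ forces every class on a level above $\hgt(s)$, when projected down to $\hgt(s)$, to be at least $\mu$-to-one over the class of $s\uhr\hgt(s)$ — in particular classes above $s$ cannot be singletons. This contradiction completes the forward direction.

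For the backward direction (\emph{nowhere large $\Rightarrow$ $\infty$-nice}), here is the main work. Start with the nice subalgebra $\A$ and, by Lemma \ref{lm:representation}, fix a Souslinization $T$ of $\B$ carrying a nice t.e.r.\ $\equiv$ representing $\A$; by part c) of that lemma we may pass to a club so that $\equiv$ is nice throughout. The goal is to find a club $C\subseteq\kappa$ on which $\equiv$ becomes $2$-nice, i.e.\ for all $\alpha<\beta$ in $C$ and all $s\in T_\beta$ and all $r\in(s\uhr\alpha)/\!\!\equiv$ there are at least two $t\in s/\!\!\equiv$ with $t\uhr\alpha=r$. The key local fact is: because $\A$ is nowhere large, for every node $s\in T$ the class $s/\!\!\equiv$ "spreads out" above every $\equiv$-class node below it — formally, for $r\equiv s\uhr\alpha$ the relative algebra $r'\A$ (for $r'$ a node of $T$ extending $r$, chosen below the Boolean element $\sum s/\!\!\equiv$) is a proper subalgebra of $\B\uhr r'$, hence there exist two incomparable extensions of $r$ to higher levels that are $\equiv$-equivalent and lie below the same $\A$-element as $s$. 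Iterating/accumulating this, one shows: for each $\alpha$, the set of $\beta>\alpha$ such that the projection $s/\!\!\equiv\,\to\,(s\uhr\alpha)/\!\!\equiv$ is already $(\geq 2)$-to-one for \emph{all} $s\in T_\beta$ contains a tail; more carefully, one builds $C$ by a closure argument, choosing $\alpha_{n+1}>\alpha_n$ large enough that every $\equiv$-class at level $\alpha_n$ has genuinely split (into $\geq 2$ pieces projecting onto each node below) by level $\alpha_{n+1}$, using the $\kappa$-chain condition to bound how high one must go and $\kappa$ regular to close off at limits. Then $C$ is the club of closure points; on $T\uhr C$ the t.e.r.\ is $2$-nice, hence $\mu$-nice for all $\mu<\kappa$ by the remark following Definition \ref{defi:inice}, so $\A$ is $\infty$-nice.

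The main obstacle is the backward direction's quantitative step: translating "nowhere large" (a statement about every relative algebra $b\A\neq\B\uhr b$) into the uniform, level-indexed splitting condition required by $\mu$-niceness. The subtlety is that nowhere-largeness gives, at each node, \emph{some} splitting of $\equiv$-classes, but $\mu$-niceness demands that this splitting be visible over \emph{every} lower class-node simultaneously and uniformly across a whole level; one has to check that the "bad" set of levels where some class has not yet split over some lower node is non-stationary, which is where the $\kappa$-c.c.\ (bounding the number of classes and nodes to track) and the regularity of $\kappa$ (for the diagonal-intersection closure) are essential. I expect the honesty clause (iv) of the t.e.r.\ definition, together with Proposition \ref{prp:irred-dense}, to be what guarantees that the splitting detected at successor stages propagates correctly through limit levels, so that the club $C$ genuinely works.
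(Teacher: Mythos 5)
Your proof is correct and follows the same overall strategy as the paper's (the forward direction is the paper's argument in contrapositive form: $2$-niceness forces every $\equiv$-class to meet $\{t\mid t\leq_\B b\}$ at most once is exactly the dual of the paper's observation that above any node $s$ every class has a second representative, so $\A\cdot s\neq\B\uhr s$). The one substantive difference is in the backward direction's key claim --- that for each $\alpha$ there is a \emph{single} level $\beta$ above which every class has split over its level-$\alpha$ predecessor. The paper proves this by contradiction: collecting one ``unsplit'' node from each level $\beta>\alpha$, applying the pigeonhole principle to find $\kappa$ many above one $s^*\in T_\alpha$, and invoking the Subtree Lemma to extract a cone $T(r)$ of unsplit nodes, whence $r\A=\B\uhr r$. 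You instead argue that the set $G$ of split nodes is dense open, so its minimal elements form a maximal antichain, which by the $\kappa$-c.c.\ and regularity of $\kappa$ is bounded below some level $\beta$; the two routes are essentially equivalent (the Subtree Lemma is the packaged form of your antichain argument). Two points deserve to be made explicit in your write-up. First, your ``key local fact'' as literally stated overclaims: nowhere-largeness at a node $r'$ only yields that \emph{some} class above $r'$ eventually splits, not that the class of a prescribed $s$ does; the passage from ``some class at some level'' to ``every class at one level'' is precisely what the antichain (or Subtree Lemma) step supplies, and you do correctly flag this as the crux. Second, the implication ``no class ever splits above $r'$ $\Rightarrow$ $r'\A=\B\uhr r'$'' needs the computation $t=h(t)\cdot r'$ for all $t>r'$ (using that niceness gives $\sum t/\!\!\equiv\,=h(t)$ and that $T(r')$ is dense in $\B\uhr r'$); this is where niceness genuinely enters and should not be left implicit. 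With those two points filled in, your argument is complete.
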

Plainly:
the $\infty$-nice subalgebras are just the nice and nowhere large ones.
\begin{proof}
Let $T$ souslinize $\B$,
and let the nice t.e.r. $\equiv$ on $T$ represent $\A$.
We start from left to right, so let $\equiv$ be $2$-nice.
We show for every node $s$ of $T$, that $\A\cdot s\neq\B\uhr s$.
Pick any node $t$ above $s$ in $T$. Since $\equiv$ is $2$-nice, there
is a node $r$ above $s$ and equivalent to $t$, so $t\not\in\A\cdot s$.
So what we actually have shown, is $\A\cdot s=\{0,s\}$.

For the other implication let $\A$ be nowhere large.
We define a club set $C\subset\kappa$,
such that the restriction of $\equiv$ to $T\uhr C$ is $2$-nice.
The inductive construction of $C$ is straightforward
once we have proven the following claim.\\
{\bf Claim.}
Given any $\alpha<\kappa$ there is a $\beta\in(\alpha,\kappa)$ such that
for all nodes $t\in T_\beta$ there is a node $t'\in T_\beta\setminus\{t\}$ above
$t\uhr\alpha$ and equivalent to $t$.

To prove the claim by contradiction,
assume that there is an ordinal $\alpha$
such that for all $\beta>\alpha$
there is a node $t^\beta$ of level $T_\beta$ such that
above its predecessor $t^\beta\uhr\alpha$
there is no other node equivalent to $t^\beta$.
By the pigeon hole principle one of the $<\kappa$ many nodes of level
$T_\alpha$ sits underneath $\kappa$ many of these nodes $t^\beta$.
So we can assume,
that we have one node $s^*\in T_\alpha$
such that $s^*<t^\beta$ for all $\beta>\alpha$.
But then these nodes $t^\beta$ for $\beta>\alpha$ span a tree of
height $\kappa$ which by the Subtree Lemma \ref{lm:subtree}
contains a canonical subtree $\{s<r\}\cup T(r)$ of $T$
for some node $r$ above $s^*$.
But then in turn we have that $\A\cdot r=\B\uhr r$,
which contradicts the hypothesis
on $\A$ to be nowhere large in $\B$.
\end{proof}

The basic example we consider now can easily be generalized
to $\kappa$-Souslin algebras for regular $\kappa>\aleph_0$.

\begin{expl}
\label{expl:prod_nice}
Let $S$ and $T$ be $\aleph_0$-splitting
$\omega_1$-trees such that their tree product $S\otimes T$ is
$\omega_1$-Souslin\footnote{For example, the principle $\dia$ implies,
that for every given $\omega_1$-Souslin tree $S$
there is an $\omega_1$-Souslin tree $T$, such that
$S\otimes T$ is c.c.c..
For a proof of this fact see \cite[Lemma 7.3]{larson}.}.
Set $\B:=\RO(S\otimes T)$
and $(s,t)\sim(u,v)$ if and only if $s=u$.
Then $\sim$ is an $\aleph_0$-nice t.e.r.:
If $s<_S s'$ and $\hgt_S(s)=\hgt_T(t)=\hgt_T(r)$ and $t'>_T t$,
then for any $r'>r$ we have that $(s',t')\sim(s',r')$.
So $\sim$ is nice.
The $\aleph_0$-part follows from the splitting assumption on $T$.
The quotient tree $(S\otimes T)/\!\!\sim$ is obviously isomorphic to $S$,
and the subalgebra $\A$ represented by $\sim$ is $\infty$-nice in $\B$.
\end{expl}
\begin{rem}
Note that not not all $\infty$-nice subalgebras of $\kappa$-Souslin algebras
do have a complement as in the example above.
For example, one of the subalgebras, that will be constructed in Section
\ref{sec:no_schroeder}, call it $\C$, 
is $\infty$-nice, yet isomorphic to the superalgebra $\B$.
If there was a subalgebra $\C'$ of $\B$ independent of $\C$,
then an isomorphic copy of $\C'$ would exist inside of $\C$.
This contradicts the chain condition satisfied by $\B$.
\end{rem}

\subsection{Homogeneity and $\infty$-nice subalgebras}

Recall that a Boolean algebra $B$ is \emph{homogeneous}, if for all pairs
$b\in B^+$ there is a Boolean isomorphism between $B$ and $B\uhr b$,
while homogeneity of the tree $T$ means that for all pairs of nodes $s,t$
of the same height in $T$ the trees $T(s)$ and $T(t)$ of nodes
above $s$ and $t$ respectively are isomorphic

\begin{prp}\label{prp:hom_souslinization}
Let $\kappa$ be an uncountable, regular cardinal.
Then every homogeneous $\kappa$-Souslin algebra
has a homogeneous Souslinization.
\end{prp}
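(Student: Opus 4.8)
Start from an arbitrary Souslinization $S$ of the homogeneous $\kappa$-Souslin algebra $\B$. The goal is to build, by recursion on levels $\alpha<\kappa$, a new Souslinization $T$ together with, for every pair $s,t\in T_\alpha$, a Boolean isomorphism $\varphi_{s,t}\colon\B\uhr s\to\B\uhr t$, in such a way that these maps are coherent (i.e.\ $\varphi_{t,u}\circ\varphi_{s,t}=\varphi_{s,u}$ and $\varphi_{s,s}=\mathrm{id}$, so they form a groupoid of isomorphisms along each level) and compatible with the tree order (if $s'<_T s$ and $t'=\varphi_{s',t'}$-image witnessed appropriately, the level-$\alpha$ maps refine the level-$\alpha'$ maps for $\alpha'<\alpha$). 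The restriction of $\varphi_{s,t}$ to the subtree $T(s)$ will then be the desired tree isomorphism onto $T(t)$, giving homogeneity of $T$.

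\textbf{The recursion.} At the root level there is nothing to do. At a successor step $\alpha\to\alpha+1$: having the coherent family $(\varphi_{s,t})_{s,t\in T_\alpha}$, first choose for one representative node $s_0$ in each $\equiv$-free position an arbitrary refinement into a maximal antichain of $\B\uhr s_0$ of size $<\kappa$, yielding the successors of $s_0$; then \emph{define} the successors of every other $s\in T_\alpha$ and the node-level maps by pushing this refinement forward along $\varphi_{s_0,s}$. Concretely, if $T(s_0)$'s first splitting is $\{c_i:i\in I\}$, put the successors of $s$ to be $\{\varphi_{s_0,s}(c_i):i\in I\}$ and set $\varphi_{\varphi_{s_0,s}(c_i),\varphi_{s_0,s'}(c_i)}:=\varphi_{s,s'}\uhr(\B\uhr\varphi_{s_0,s}(c_i))$. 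Coherence at level $\alpha+1$ is inherited from coherence at level $\alpha$. We must also make sure $T$ remains a Souslinization of \emph{all} of $\B$: here is where homogeneity of $\B$ is used in a second way --- interleave a bookkeeping enumeration of a dense subset of $\B$ and at stage $\alpha$ additionally refine so that the $\alpha$-th listed element lies in the completely generated subalgebra $\langle T_{\alpha+1}\rangle^{\cm}$, exactly as in the proof of the Representation Lemma \ref{lm:representation}; one checks this extra refinement can be carried along the $\varphi$'s as well, since an isomorphism $\B\uhr s_0\to\B\uhr s$ carries dense subsets to dense subsets.

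\textbf{Limit steps.} At a limit ordinal $\alpha$, take full canonical limits: $T_\alpha=\{\prod b: b\in[T\uhr\alpha]\}\setminus\{0\}$. For two branches $x,y\in[T\uhr\alpha]$ with nonzero products $s=\prod x$, $t=\prod y$, the coherent successor-level maps $\varphi_{x\uhr\gamma,\,y\uhr\gamma}$ for $\gamma<\alpha$ assemble (by the infinite distributive law and coherence) into a well-defined isomorphism $\varphi_{s,t}\colon\B\uhr s\to\B\uhr t$ --- this is the standard ``continuity'' argument for directed systems of Boolean isomorphisms respecting a tree structure. One has to verify that $\varphi_{s,t}$ does not depend on the choice of compatible branch threads and that the groupoid identities persist; this is bookkeeping but routine given the care taken at successors.

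\textbf{Main obstacle.} The delicate point is the simultaneous maintenance of \emph{two} demands that pull in opposite directions: homogeneity requires us to copy refinements rigidly along the $\varphi_{s,t}$'s, whereas being a Souslinization of $\B$ (not of some subalgebra) requires enough freedom to absorb a prescribed dense set of elements of $\B$. The resolution --- and the step I expect to need the most care --- is that one only needs to pick the refinement \emph{freely at one node per level} and propagate; homogeneity of $\B$ guarantees that whatever dense element we wish to capture below $s$ can equally be captured below the chosen representative $s_0$ (transport it by $\varphi_{s,s_0}$, capture it there, transport the refinement back). Making this interleaving precise, and checking that the resulting $T$ is still $\kappa$-normal ($\kappa$-splitting levels stay of size $<\kappa$ because all successor-sets are isomorphic copies of a single antichain of size $<\kappa$, and there are $<\kappa$ branches to extend at each limit by the cardinal-arithmetic hypotheses implicit in having a Souslinization), completes the argument.
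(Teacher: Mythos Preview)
Your approach differs substantially from the paper's. Rather than building a new Souslinization from scratch, the paper starts from an arbitrary Souslinization $T$, invokes homogeneity of $\B$ to get Boolean isomorphisms $\psi_{st}:\B\uhr s\to\B\uhr t$ for each pair $s,t$ of the same height, applies the Restriction Lemma~\ref{lm:restriction} to each $\psi_{st}$ to obtain a club $C_{st}\subseteq\kappa$ on which $\psi_{st}$ restricts to a tree isomorphism $T(s)\uhr C_{st}\to T(t)\uhr C_{st}$, and then produces a single club $C$ by a diagonal recursion (at each step intersecting only $<\kappa$ many clubs, one for each pair on the current level). The restriction $T\uhr C$ is then homogeneous. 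This sidesteps entirely the coherence issues inherent in your recursion.

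Your recursion has a genuine gap at the limit step. You claim that for arbitrary $s=\prod x$ and $t=\prod y$ in $T_\alpha$ the maps $\varphi_{x\uhr\gamma,\,y\uhr\gamma}$ ``assemble'' into a single isomorphism $\varphi_{s,t}$, but for this limit to make sense one needs $\varphi_{x\uhr\gamma,\,y\uhr\gamma}(x\uhr\gamma')=y\uhr\gamma'$ for all $\gamma<\gamma'<\alpha$. Under your successor-step definitions this forces $x$ and $y$ to choose the \emph{same} index $c_i$ at every splitting stage; for a generic pair of branches this fails, and then the system $(\varphi_{x\uhr\gamma,\,y\uhr\gamma})_{\gamma<\alpha}$ is not coherent and has no limit. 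The repair is to observe that the relation ``there is an inherited isomorphism from $s$ to $t$'' is an equivalence relation on $T_\alpha$, choose fresh Boolean isomorphisms between representatives of its classes (using homogeneity of $\B$ again), and then propagate via the groupoid law. This is exactly the maneuver the paper performs in the proof of the very next result, Theorem~\ref{thm:subalg_of_hom}, and it is not the routine bookkeeping you suggest. Your approach is salvageable, but the paper's club-thinning argument is both shorter and avoids the difficulty altogether.
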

\begin{proof}
Let $\B$ be homogeneous and $T$ be any Souslinization of $\B$.
Our task is to find a club $C\subset\kappa$ such
that $T\uhr C$ is a homogeneous $\kappa$-Souslin tree.
By the homogeneity of $\B$ we can choose
a Boolean isomorphism $\psi_{st}:\B\uhr s\to\B\uhr t$
for every pair $s,t\in T$ of the same height $\alpha<\kappa$.
By the Restriction Lemma \ref{lm:restriction}
for Isomorphisms, 
there is also a club $C_{st}\subset\kappa$ containing  $\alpha$,
such that $\psi_{st}\uhr(T(s)\uhr C_{st})$
is an isomorphism onto $T(t)\uhr C_{st}$.

Finally, we define $C$ to be the range
of the normal sequence $(\gamma_\nu)$ which is given as follows:
Set $\gamma_0=0$ and let for $\nu<\kappa$
$$\gamma_{\nu+1}:=\min\bigcap_{s,t\in T_{\gamma_\nu}}C_{st}\setminus(\gamma_\nu +1),$$
the limit values of the sequence are then determined by normality.
\end{proof}

In Section \ref{sec:large} we have seen that the existence of large
subalgebras is linked to the existence of automorphisms.
Yet if there are enough automorphisms,
which here means: if $\B$ is homogeneous,
then we even get subalgebras of different kinds
($\infty$-nice and nowhere nice,
see also Theorem \ref{thm:exist_non-nice}).

\begin{thm}\label{thm:subalg_of_hom}
Every homogeneous $\kappa$-Souslin algebra has
an $\infty$-nice subalgebra.
\end{thm}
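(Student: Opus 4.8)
The plan is to build an $\infty$-nice subalgebra of a homogeneous $\kappa$-Souslin algebra $\B$ by mimicking the tree-product construction of Example \ref{expl:prod_nice}, using homogeneity to supply the necessary ``copies'' of the tree above each node. First I would invoke Proposition \ref{prp:hom_souslinization} to fix a homogeneous Souslinization $T$ of $\B$, so that for every pair $s,t$ of nodes of the same height there is a tree isomorphism $T(s)\to T(t)$. The idea is then to pass to a suitable club $C\subset\kappa$ and define a t.e.r. $\equiv$ on $T\uhr C$ whose quotient tree is (isomorphic to) $T\uhr C$ again, with each $\equiv$-class above a node being a faithful copy of the relevant subtree — this will automatically make $\equiv$ nice, and by arranging each class to split into at least two (hence, after thinning the club, $\geq\mu$) branches over each predecessor, it will be $2$-nice and thus $\infty$-nice.

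Concretely, I would try to realize $T\uhr C$ as something resembling $T'\otimes T'$ for a homogeneous $\kappa$-Souslin tree $T'$ derived from $T$, and let $\equiv$ be ``equality in the first coordinate'' as in Example \ref{expl:prod_nice}. The subtlety is that for a general homogeneous $\B$ there need not be a genuine independent complement (cf. the remark following Example \ref{expl:prod_nice}), so the product picture cannot be taken literally; instead I would build $\equiv$ level by level along the club. At a successor step, having defined $\equiv$ on $T_{\gamma_\nu}\uhr C$, I would split each node of $T$ enough times between levels $\gamma_\nu$ and $\gamma_{\nu+1}$ so that within each $\equiv$-class the projection back to level $\gamma_\nu$ becomes (at least) $\mu$-to-one, and I would use the homogeneity isomorphisms $\psi_{st}$ to copy the local structure above one representative of each class onto all other members, so that condition (iii) (the quotient is a $\kappa$-normal tree order) and niceness are preserved. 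The Restriction Lemma \ref{lm:restriction} guarantees that these isomorphisms agree on a club, which feeds into the choice of $C$ exactly as in the proof of Proposition \ref{prp:hom_souslinization}. At limit levels of $C$ the relation $\equiv$ is forced by the unique-limits condition, and one checks honesty via Proposition \ref{prp:irred-dense}: the class of a branch is the set of branches through the corresponding copied subtree, which is dense there by construction.

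The main obstacle I expect is verifying clause (iii) — that $T\uhr C/\!\!\equiv$ is genuinely $\kappa$-normal, in particular that distinct $\equiv$-classes really split and that limit classes have unique extensions — while simultaneously keeping the whole configuration coherent across all nodes of a given level using only the pairwise isomorphisms $\psi_{st}$ (which need not compose to an action). The clean way to handle this is to fix, for each level $T_{\gamma_\nu}$, a single ``base'' node $s_\nu$ and define all the $\psi$-maps as $\psi_{s_\nu,t}$, so that coherence is automatic; then $\equiv$ is essentially pulled back from a single copy of $T(s_\nu)$, and $\kappa$-normality of the quotient reduces to $\kappa$-normality of $T(s_\nu)$ together with the splitting arranged at each step. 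Once the recursion runs, the resulting $\equiv$ on $T\uhr C$ is nice and $\mu$-nice for the chosen $\mu$, hence by Definition \ref{defi:inice}(c) it witnesses that the subalgebra $\A$ it represents is $\infty$-nice in $\B$.
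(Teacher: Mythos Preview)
Your overall strategy is the paper's: start from a homogeneous Souslinization $T$, and inductively build a $2$-nice t.e.r.\ $\equiv$ on $T$ by using the homogeneity isomorphisms to copy the local $\equiv$-structure from one member of each class to all others, so that niceness is maintained. Where your outline goes wrong is in the proposed simplification at the end. Fixing a single base node $s_\nu$ per level and using only the maps $\psi_{s_\nu,t}$ does \emph{not} make coherence automatic: these maps relate every node to every other, so the relation ``pulled back from $T(s_\nu)$'' degenerates into the all-equal relation on each level, and in any case the composites $\psi_{s_{\nu+1},t'}$ and $\psi_{s_\nu,t}\uhr T(t')$ need not agree for $t'>t$ unless you arrange it. What is actually needed (and what the paper does) is a family $\varphi_{st}:T(s)\to T(t)$ indexed by \emph{equivalent} pairs $s\equiv t$, together with representatives $I_\alpha$ for the $\equiv$-classes, satisfying the cocycle condition $\varphi_{st}=\varphi_{rt}\circ\varphi_{sr}$ and the coherence condition $\varphi_{st}=\varphi_{(s\uhr\gamma)(t\uhr\gamma)}\uhr T(s)$ whenever the right-hand side sends $s$ to $t$. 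The t.e.r.\ and this coherent family must be built simultaneously.

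The second point you underestimate is the limit step. You say one ``checks honesty via Proposition~\ref{prp:irred-dense}'', but you need niceness, not just honesty, and it is not automatic. Given $s\in T_\alpha$ (limit) and $u\equiv s\uhr\gamma$, the required witness above $u$ is $t=\varphi_{(s\uhr\gamma),u}(s)$, which only exists because the coherent family was carried along. Moreover, at a limit level there may be nodes $s\equiv t$ for which \emph{no} inherited isomorphism $\varphi_{(s\uhr\gamma)(t\uhr\gamma)}$ sends $s$ to $t$; the $\equiv$-class of $s$ then decomposes into orbits of the inherited isomorphisms, and you must choose fresh isomorphisms between representatives of these orbits (and then compose) to continue the recursion. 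Without this extra step you cannot define $\varphi_{st}$ for all equivalent pairs on level $\alpha$, and the construction stalls at the next successor.
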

\begin{proof}
Let $T$ be a homogeneous $\kappa$-Souslin tree,
i.e., for every pair $s,t$ of nodes on the same level of $T$
there is a tree isomorphism between $T(s)$ and $T(t)$.
We inductively show for $\alpha<\kappa$ that $T\uhr\alpha$
carries an $2$-nice t.e.r. $\equiv$ using the homogeneity of $T$.
After construction stage $\alpha<\kappa$ we will have fixed
the t.e.r. $\equiv$ on $T\uhr\alpha+1$,
sets $I_\gamma\subset T_\gamma$ of representatives
of the $\equiv$-classes for $\gamma\leq\alpha$ and
a family of isomorphisms
$$\{\varphi_{st}:T(s)\cong T(t)\mid \,
s\equiv t,\,\hgt(s)\leq\alpha\}.$$
These isomorphisms commute in the sense that
$$\varphi_{tt}=\id_{T(t)}
\quad\text{ and }\quad
\varphi_{st}=\varphi_{rt}\circ\varphi_{sr}
\quad\text{ (for all } r\equiv_\gamma s\equiv_\gamma t).$$
Furthermore they
have the following coherence property:
for $s,t\in T_\alpha$ and $r=s\uhr\gamma$, $u=t\uhr\gamma$ where
$\gamma<\alpha$ and $\varphi_{ru}(s)=t$
we have $\varphi_{st}=\varphi_{ru}\uhr T(s)$.
These isomorphisms $\varphi_{st}$ will help to guarantee
that $\equiv$ always remains honest.

We will use the representatives from the set $I_\alpha$
for the constructions of both the t.e.r. and the tree isomorphisms.
We will first define the relevant piece of structure
above the representative nodes $r\in I_\alpha$
and then copy it over to the equivalent nodes by virtue of the 
tree isomorphisms that have already been fixed.

In the case of the successor ordinal $\alpha+1$,
we consider the equivalence relation $\equiv$ on $T_\alpha$,
the set of representatives $I_\alpha\subset T_\alpha$ and
the isomorphisms $\varphi_{st}$ for $s\equiv_\alpha t$,
all given by the inductive hypothesis.
For $s\in T_\alpha$ denote by $r_s$
the unique element of $s/\!\!\equiv\cap\, I_\alpha$.
In order to define $\equiv$ on $T_{\alpha+1}$,
we first choose for each $r\in I_\alpha$ a partition
of $\NF(r)$ into $2$ sets $P^r_0,P^r_1$ of equal cardinality.

Then for all $s,t\in T_{\alpha+1}$ 
we let $s$ and $t$ be equivalent
if their (immediate) predecessors $s^-$ and $t^-$
are and if their
images under the tree isomorphisms sending them above
the representative node $r=r_{s^-}=r_{t^-}$
lie in the same member of the partition, both in $P^r_0$ or both in $P^r_1$:
$$s\equiv t \,:\iff \,s^-\equiv t^- \text{ and }
(\exists i\in \{0,1\})\,\,
\varphi_{s^-r}(u),\,\varphi_{t^-r}(v) \in \,P^{r}_i.$$
Afterwards,
we pick a set of representatives $I_{\alpha+1}\subset\bigcup_{r\in I_\alpha}\NF(r)$.

Finally,
we have to choose the tree isomorphisms $\varphi_{st}$
for all equivalent pairs $s,t\in T_{\alpha+1}$
such that the coherence requirement as formulated above is satisfied.
Fix a representative $r\in I_{\alpha+1}$ and
choose for a pair of successors $(s,t)$ of $r^-:=r\uhr\alpha$,
both equivalent but unequal to $r$,
isomorphisms $\varphi_{st}$ and $\varphi_{sr}$ respectively and
let $\varphi_{st}=\varphi_{rt}\circ\varphi_{rs}^{-1}$.
For $s,t$ both equivalent to $r$,
but not necessarily successors of $r^-$, define
$$\varphi_{st}:=
(\varphi_{r^-t^-}\uhr T(v))\circ\varphi_{uv}\circ(\varphi_{s^-r^-}\uhr T(s)),$$
where $u:=\varphi_{s^-r^-}(s)$ and $v:=\varphi_{t^-r^-}(t)$.

Whenever $\alpha<\kappa$ is a limit ordinal,
we have no choice for the equivalence relation $\equiv$ on $T_\alpha$:
For $s,t\in T_\alpha$ we let $s\equiv t$
if and only if
$s\uhr\gamma\equiv t\uhr\gamma$ for all $\gamma<\alpha$.

Before defining the remaining tree isomorphisms we check,
that this definition yields a nice t.e.r. up to level $T_\alpha$ .
So fix $s\in T_\alpha$.
For every $\gamma<\alpha$ and $u\equiv s\uhr\gamma$
there is some $t\in T_\alpha$ equivalent to $s$ and above $u$,
namely $t=\varphi_{s\upharpoonright\gamma,u}(s)$.
So niceness is maintained up to level $\alpha$,
and for equivalent pairs $(s,t)$ of this kind we already have the isomorphism
$\varphi_{st}=\varphi_{s\upharpoonright\gamma,t\upharpoonright\gamma}\uhr T(s)$
at hand.
But there can be equivalent nodes $s$ and $t$ on level $\alpha$,
such that for all their pairs $u,v$ of respective
predecessors on the same level we have $\varphi_{uv}(s)\neq t$.
However, each $\equiv$-class $r/\!\!\equiv$
divides into a partition $\mathscr{P}$
such that for every pair of nodes $s,t\equiv r$,
both $s$ and $t$ are elements of the same member of $\mathscr{P}$
if and only if they have such an inherited isomorphism
$\varphi_{st}=\varphi_{s\upharpoonright\gamma,t\upharpoonright\gamma}\uhr T(s)$.

After choosing a set of representatives $J$
for the partition $\mathscr{P}$ 
and fixing isomorphisms $\varphi_{st}$ for representatives $s,t\in J$
we can construct the still missing isomorphisms in the same manner as above.

We finally choose a set $I_\alpha$ of representatives
for the $\equiv$-classes of the limit level
$T_\alpha$ without any further restriction.

This finishes the construction of $\equiv$, and we hope
that it is clear that the result is
a $2$-nice t.e.r.\ on $T$.
\end{proof}

\subsection{Hidden symmetries}
\label{sec:hidden_symm}
In Section \ref{sec:rigid-non-rigid} we will construct a $\kappa$-Souslin
algebra with an $\infty$-nice subalgebra but without large subalgebras,
i.e., without automorphisms except for the identity.
The next lemma and the subsequent theorem say
that in such a situation there have to be other subalgebras,
in particular subalgebras which do have symmetries.

This stands in sharp contrast to the subalgebra to be constructed
in Section \ref{sec:unique_non-nice} which is essentially a unique subalgebra.

\begin{lm}\label{lm:aut_over_inice}
Let $\A$ be an $\infty$-nice subalgebra of a $\kappa$-Souslin algebra $\B$.
Then there is an $\infty$-nice subalgebra $\C$ of $\B$,
such that $\C$ admits a non-trivial automorphism $\varphi$
and $\A$ is the subalgebra of $\C$
that consists of the fixed points under $\varphi$.
So $\A$ is a large subalgebra of $\C$, yet for all $a\in \A^+$ we have
$\A\uhr a \neq \C\uhr a$.
\end{lm}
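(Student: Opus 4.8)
The plan is to build $\C$ by enlarging $\A$ with a single ``symmetric bit'' which is frozen along every branch, and to take $\varphi$ to be the flip of that bit. Since $\A$ is $\infty$-nice, fix a Souslinization $T$ of $\B$, a club $C\subseteq\kappa$ and a $2$-nice t.e.r.\ $\equiv$ on $T\uhr C$ representing $\A$; write $U:=(T\uhr C)/\!\!\equiv$, a Souslinization of $\A$, and let $\gamma_0:=\min(C\setminus\{0\})$ be its first non-root level. For each $\equiv$-class $c$ on level $\gamma_0$ fix a partition $c=c^0\,\dot{\cup}\,c^1$ into two non-empty pieces, and define a refinement $\equiv'$ of $\equiv$ on $T\uhr C$ by declaring $s\equiv' t$ iff $s\equiv t$ and $s\uhr\gamma_0,\,t\uhr\gamma_0$ lie in the same piece $c^i$ of $c:=(s\uhr\gamma_0)/\!\!\equiv$. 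Let $\C$ be the subalgebra of $\B$ represented by $\equiv'$; as $\equiv'$ refines $\equiv$ we obtain $\A\subseteq\C\subseteq\B$.

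The key point is that, because $\equiv$ is nice, the quotient tree $U':=(T\uhr C)/\!\!\equiv'$ is obtained from $U$ by replacing, above the common root, each relativized tree $U(c)$ (for $c$ on level $\gamma_0$) by two disjoint copies of itself. Indeed, for a $\equiv$-class $v\geq_U c$ the two $\equiv'$-classes into which it splits are $v^i:=\{s\in v:s\uhr\gamma_0\in c^i\}$, and niceness of $\equiv$ — every node of $c^i$ has a member of $v$ above it — shows that $v\mapsto v^0$ and $v\mapsto v^1$ are order isomorphisms of $U(c)$ onto the two halves of $U'$ above $c^0$, resp.\ $c^1$. Hence the ``swap the copies'' map $\sigma$ is a non-identical automorphism of $U'$; let $\varphi\in\Aut\C$ be the Boolean automorphism it induces on $\C\cong\RO U'$ (so $\varphi\neq\id$). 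Passing to the product description $\C\cong\prod_{c}\bigl(\RO U(c)\bigr)^2$, with the product over the level-$\gamma_0$ classes, the embedding of $\A=\RO U\cong\prod_c\RO U(c)$ becomes the product of diagonals and $\varphi$ becomes the coordinatewise flip; so $\varphi$ fixes $\A$ pointwise and its fixed-point algebra is exactly $\A$. Applying Theorem~\ref{thm:FPlarge} to $\varphi$, the fixed-point algebra $\A$ is therefore large in $\C$.

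It remains to check that $\C$ is $\infty$-nice in $\B$ and that $\A\uhr a\neq\C\uhr a$ for $a\in\A^+$. For the first, $\equiv'$ is again a nice t.e.r., and a short computation using $2$-niceness of $\equiv$ — noting that once the $\gamma_0$-restrictions of two same-level nodes agree the ``piece'' condition is automatic — shows that after restricting to a further club $\equiv'$ is $2$-nice, hence witnesses $\infty$-niceness of $\C$ in $\B$. For the second, use density of the nodes of $U$ in $\A$ to pick a non-zero node $u$ of $U$ with $u\le_\B a$; its two $\equiv'$-lifts $u^0,u^1$ lie in $\C$ and are non-zero, and $u^0\notin\A$ since $\varphi(u^0)=u^1\neq u^0$ while $\varphi\uhr\A=\id$; thus $\A\uhr a\subsetneq\C\uhr a$. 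The only step that needs real care is the structural observation of the second paragraph: the automorphism of $\C$ over $\A$ has to be produced entirely inside the quotient $U'$, since $T$ — and hence $\B$ itself — may be rigid; this is precisely why one passes to $U'$ and uses niceness of $\equiv$ to match its two halves by the canonical bijections, checking order-preservation and compatibility at limit levels.
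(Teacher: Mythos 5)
Your proof is correct and follows essentially the same route as the paper: refine the representing t.e.r.\ $\equiv$ by a two-way split of each class, indexed at a single fixed level, and let $\varphi$ be induced by the quotient-tree automorphism swapping the two halves (whose well-definedness rests, as in the paper, on the niceness of $\equiv$), with Theorem~\ref{thm:FPlarge} then yielding largeness of the fixed-point algebra. The only difference is cosmetic: the paper performs the split at a limit level $T_\lambda$ into two pieces that are dense in each class (and then drops the levels up to $\lambda$ from the club), whereas you split at the first non-root level into arbitrary non-empty pieces, non-emptiness being guaranteed by $2$-niceness; both variants go through.
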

\begin{proof}
Let $T$ souslinize $\B$ and
let $\equiv$ on $T$ be $\infty$-nice and represent $\A$ in $T$.
Choose any limit $\lambda<\kappa$ and
let $\simeq$ coincide with $\equiv$ on $T\uhr(\lambda+1)$.
Now we divide every $\simeq$-class $a$ of level $T_\lambda$
in two indexed parts $a=a_0\dot{\cup}a_1$,
such that for every pair $s\in a_0$ and $r<_T s$
there is a node $t\in a_1$ above $r$
and vice versa, i.e.,
for $r<t\in a_1$ there exists $s\in a_0\cap T(r)$.
Another way to formulate this condition is to say that we consider
$T_\lambda$ as a subspace of $[T\uhr \lambda]$ and require that
each class $a$ divided in two parts that lie densely in $a$.
This can be done after choosing an enumeration of minimal length
of the predecessor set $\bigcup\{s\in T\mid (\exists t\in a)s<_T t\}$.
The family of these partitions gives rise to a map $i:T_\lambda\to \{0,1\}$,
associating to every node $s$ the index $i(s)$ with $s\in a_{i(s)}$.

Now let for $\alpha>\lambda$ and $s,t\in T_\alpha$
$$s\simeq t\,\,:\iff \, s\equiv t
\,\text{ and }\,i(s\uhr\lambda)=i(t\uhr\lambda).$$
Then $\simeq$ is clearly $\infty$-nice when restricted to $T\uhr C$,
where $C=\{0\}\cup\kappa\setminus\lambda+1$.
This shows that the subalgebra
$\C:=\langle \sum s/\!\!\simeq\mid s\in T\rangle^\cm$
is nice and nowhere large in $\B$.

Furthermore, for every $s\in T$ above level $\lambda+1$
the $\equiv$-class of $s$ is divided into exactly two $\simeq$-classes.
So we can define the automorphism $\varphi$ of $(T\uhr C)/\!\!\simeq$
that for each
$\equiv$-class interchanges the two $\simeq$-classes.
Then $\varphi$ naturally extends to an automorphisms of $\C$ that
has $\A$ as its fixed point algebra which is
by Theorem~\ref{thm:FPlarge} large in $\C$.
(In fact $\A$ is 1-large in $\C$ as witnessed by $\{\sum f^{-1}"\{0\}\}$.)
\end{proof}

\subsection{Nowhere nice subalgebras}

The main idea of the last proof,
that of dividing the classes on a limit level in dense subsets,
can be also used to construct nowhere nice subalgebras.

\begin{thm}\label{thm:exist_non-nice}
If a $\kappa$-Souslin algebra $\B$ has a nice and nowhere large subalgebra $\A$
then there is a nowhere nice subalgebra $\C$ of $\B$
and $\A$ is an $\infty$-nice subalgebra of $\C$.
\end{thm}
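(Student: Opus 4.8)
The idea, announced in the remark preceding the theorem, is to adapt the construction of Lemma~\ref{lm:aut_over_inice}: given that $\A$ is nice and nowhere large in $\B$, it is $\infty$-nice by the proposition above, so we may fix a Souslinization $T$ of $\B$ and (after passing to a club) a $2$-nice t.e.r. $\equiv$ on $T$ representing $\A$. We will build a finer t.e.r. $\simeq$ on $T$ (again after restricting to a club set of levels) whose associated subalgebra $\C$ satisfies $\A\subseteq\C\subseteq\B$, is $\infty$-nice \emph{over} $\A$ (so that $\A$ is $\infty$-nice in $\C$), yet is \emph{nowhere} nice in $\B$. In contrast to Lemma~\ref{lm:aut_over_inice}, where at a single limit level each $\equiv$-class was split into two dense halves, here we must split classes at \emph{cofinally many} limit levels $\lambda<\kappa$, arranging each split so that it is \emph{not} inherited honestly by $\simeq$, thereby creating a dispute for $\simeq$ above every node.

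First I would set up the machinery: choose a club $D\subset\kappa$ of limit ordinals at which I will perform splittings, and proceed by recursion on $\lambda\in D$. At each such $\lambda$, for every $\equiv$-class $a\subseteq T_\lambda$, partition $a$ into two pieces $a_0\dot\cup a_1$, but now \emph{unlike} in Lemma~\ref{lm:aut_over_inice} I want the split to be visible from below in a way that kills niceness: concretely, I fix a representative $s\in a$ and a level $\gamma<\lambda$ and arrange that $a_0$ contains a node above $s\uhr\gamma$ while $a_1$ contains \emph{no} node above $s\uhr\gamma$ — i.e. the branches through $a_1$ avoid the cone of $s\uhr\gamma$. Since $\A$ is nowhere large, by the Subtree Lemma argument (exactly as in the proof of the proposition relating $\infty$-niceness and nowhere-largeness) no $\equiv$-class concentrates on a single cone, so such splits exist; and one can further ask that both $a_0,a_1$ remain dense in $a$ as subspaces of $[T\uhr\lambda]$, so that $\simeq$ stays honest while failing to be nice. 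The bookkeeping — a $\kappa$-sequence of ``tasks'' demanding, for each node $r\in T$, that \emph{some} later limit level in $D$ produces a dispute on $\simeq$ with lower nodes at $r$ — is handled by a standard diagonal enumeration of $T$ (there are $\kappa$ nodes, $\kappa$ stages, and $\kappa$ is regular), copying each local decision above a class representative to the whole class via the $2$-niceness bijections, just as in Theorem~\ref{thm:subalg_of_hom} and Lemma~\ref{lm:aut_over_inice}.

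Then I would verify the three conclusions. (1) $\simeq$ refines $\equiv$, and on a suitable club $C$ it is honest and represents a Souslin subalgebra $\C$ with $\A\subseteq\C$; moreover, $\simeq$ restricted to each $\equiv$-class is a $2$-nice (hence $\infty$-nice, after thinning $C$) t.e.r.\ of the quotient, which translates to: $\A$ is $\infty$-nice in $\C$ — this part is essentially identical to the corresponding claim in Lemma~\ref{lm:aut_over_inice}. (2) $\C$ is nowhere nice in $\B$: given any $b\in\B^+$, pick a node $r\in T\uhr C$ with $T(r)\le_\B b$; by the diagonal construction some $\lambda\in D$ above $\hgt(r)$ was used to split the $\equiv$-class of some $s>r$ in the dishonest-for-niceness fashion described, producing $s',t$ with $t\simeq s'\uhr(\hgt s')$, $s'\simeq s$, yet no $\simeq$-successor of $t$ equivalent to $s'$ — i.e.\ a dispute on $\simeq$ witnessing that $\pi_{\simeq}$ disagrees with $h_{\B\uhr b,\,b\C}$, so $b\C$ is not nice in $\B\uhr b$. (3) $\C$ is a genuine Souslin subalgebra: immediate from the Remark after the t.e.r.\ definition, since the quotient tree splits.

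\textbf{Main obstacle.} The delicate point is arranging, \emph{simultaneously}, two opposing demands on each split $a=a_0\dot\cup a_1$: it must be \emph{honest} for $\simeq$ (so $\C$ is a legitimate subalgebra and $\A$ sits $\infty$-nicely inside it) — which pushes toward making both halves dense in the class $a\subseteq[T\uhr\lambda]$ — while it must \emph{fail niceness for $\C$ over $\B$} in a way detectable at a successor level below $\lambda$ — which is precisely the configuration ruled out by honesty of a \emph{nice} t.e.r. The resolution is that honesty (Proposition~\ref{prp:irred-dense}) only demands density of the class of a node's successors in the class of that node, whereas niceness demands that \emph{every} equivalent node have an appropriate successor; by making $a_1$ dense in $a$ but disjoint from one particular cone $T(s\uhr\gamma)$, I get honesty for $\simeq$ but a dispute witnessing non-niceness. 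Making this coexist with the global diagonalization over all nodes $r$, all while keeping the levels $\kappa$-normal and $\equiv$ itself intact on $T\uhr C$, is the real content — but it is a direct elaboration of the techniques already deployed in Lemma~\ref{lm:aut_over_inice} and Theorem~\ref{thm:subalg_of_hom}, so no fundamentally new device is needed.
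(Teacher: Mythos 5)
Your overall skeleton --- refine $\equiv$ by splitting each class into two dense halves at cofinally many limit levels, in the style of Lemma \ref{lm:aut_over_inice}, so that the resulting $\simeq$ is honest (indeed almost nice) but admits disputes rooted at limit levels --- is the right one and is essentially what the paper does. But the specific mechanism you propose for manufacturing the disputes is self-contradictory. You ask for a split $a=a_0\,\dot\cup\,a_1$ of an $\equiv$-class $a\subseteq T_\lambda$ in which $a_1$ is dense in $a$ (as a subspace of $[T\uhr\lambda]$) \emph{and} $a_1$ is disjoint from the cone $T(s\uhr\gamma)$ for some $s\in a$, $\gamma<\lambda$. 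These two demands cannot be met simultaneously: since $s\in a\cap\hat{s\uhr\gamma}$, the set $a\cap\hat{s\uhr\gamma}$ is a nonempty relatively open subset of $a$, so every dense subset of $a$ must meet it. The fact that $\A$ is nowhere large only tells you (via the Subtree Lemma) that $a$ is not \emph{contained} in any single cone; it does not let a dense subset of $a$ \emph{avoid} a cone that $a$ meets. If you instead drop density and actually achieve cone avoidance, honesty collapses: for $t\in a_1$ and $t'\in\NF(t)$, the triple $(t\uhr\gamma,\,t',\,s\uhr\gamma)$ is a dispute on $\simeq$ (no successor of $s\uhr\gamma$ lies in the progeny of $a_1$), but since $\simeq$ agrees with the nice relation $\equiv$ below $\lambda$, there \emph{is} a successor of $s\uhr\gamma$ at level $\gamma+1$ equivalent to $t'\uhr(\gamma+1)$ --- so the dispute does not propagate down and $\simeq$ fails to be honest, hence is not a t.e.r.\ and represents no subalgebra. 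So under either reading your central step fails.

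The repair, which is how the paper argues, is that cone avoidance is not needed at all. Keep both halves $a_0,a_1$ dense in $a$ (this is exactly what honesty/almost-niceness requires, and it is achievable since the induced classes on $[T\uhr\lambda]$ are perfect), and note that the split cannot take effect at level $T_\lambda$ itself anyway: by the unique-limits clause, $\simeq$ on a limit level is determined from below, so $s\simeq t$ still holds for $s\in a_0$, $t\in a_1$. The refinement is imposed at level $T_{\lambda+1}$ by declaring successors of $a_0$-nodes never equivalent to successors of $a_1$-nodes. Then for any $r>s$ above level $\lambda$ the triple $(s,r,t)$ is a dispute --- no successor of $t$ is $\simeq$-equivalent to $r$ because $r\uhr(\lambda+1)$ sits above an $a_0$-node --- and this dispute already manifests at level $\lambda+1$, so honesty survives while niceness fails; its lower nodes sit at a limit level, and limit levels of countable cofinality persist in every club restriction, which is what makes $\C$ \emph{nowhere} nice without any diagonal bookkeeping over nodes $r$. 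Your verification steps (1)--(3) are fine once the construction is corrected in this way.
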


\begin{proof}
Let $T$ souslinize $\B$ and let $\equiv$ represent $\A$ in $T$.
We inductively construct an almost nice, yet not nice
refinement $\simeq$ of $\equiv$,
which represents $\C$ as stated in the theorem.
Up to level $T_{\omega}$ the new relation coincides with $\equiv$.
Limit levels have to be treated canonically,
and on double successor steps $\alpha$
as well as on successors $\alpha$ of limits with uncountable cofinality,
we choose the minimal possible refinement by meeting
$$s\simeq t:\iff s\equiv t\text{ and }s^-\simeq t^-.$$

Let now $\alpha < \kappa $ be a limit
of countable cofinality, $\cf\alpha=\aleph_0$.
Note that the (induced) $\simeq$-classes on the space
$[T\uhr\alpha]$ form closed subsets without isolated points.
Therefore, regarding $T_\alpha$ as a subspace of $[T\uhr\alpha]$, the former
also divides into a partition whose members are closed subsets without
isolated points (as $T_\alpha$ is suitable for $\simeq$ on $T\uhr\alpha$).
To define $\simeq$ on level $T_{\alpha+1}$ 
we first refine $\simeq$ on $T_\alpha$ to the equivalence relation $\sim$
in a way such that every $\simeq$ class splits in two
$\sim$-classes and for every $s\simeq t\in T_\alpha$ and $u<_T t$,
there is a successor of $u$ in $s/\!\!\sim$.
(As in the proof of Lemma \ref{lm:aut_over_inice},
one could also say that the $\sim$-classes lie densely in the sense
of $[T\uhr\alpha]$ in the $\simeq$-classes.)
Then let for $s,t\in T_{\alpha+1}$:
$$s\simeq t\,\,:\iff
\,s\equiv t\,\text{ and }\,s\uhr\alpha\sim t\uhr\alpha.$$
This procedure clearly refines $\equiv$ to an almost nice t.e.r. $\simeq$.

Next we show, that no Souslinization $S$ of $\B$ admits a nice
t.e.r.\ representing $\C$.
By the Restriction Lemma \ref{lm:restriction}
we only need to consider restrictions $S=T\uhr C$ of $T$
to a club $C\subset\kappa$.
So let $\alpha\in C$ be a limit ordinal of countable cofinality
and choose $s,t\in T_\alpha$,
such that $s\simeq t$ but $s\not\sim t$.
Then for every $r\in T\uhr C$ above $s$
there is no successor of $t$ which is $\simeq$-equivalent to $r$.
So $s,t,r$ witness that $\C$ is not nice.

If we now let $C$ be the set of all limit ordinals below $\kappa$ joined by 0,
and defining on $T/\!\!\simeq$ the t.e.r. $\approx$ by
$$(s/\!\!\simeq)\approx(t/\!\!\simeq):\iff s\equiv t,$$
then it is easy to see,
that the $\infty$-niceness descends from $\equiv$ to $\approx$. 
\end{proof}

\begin{rem}
\label{rem:non-nice}
By Theorem \ref{thm:exist_non-nice} and
since niceness and largeness are local properties,
if the $\kappa$-Souslin algebra $\B$
has a non-large subalgebra of $\B$,
then there is also one which is not nice.
In particular, if $\B$ is homogeneous or if $\B$ has a pair of independent
Souslin subalgebras, then $\B$ has nowhere nice subalgebras.
\end{rem}

\part{Some constructions of $\kappa$-Souslin algebras
with certain subalgebras}

\section{T.e.r.s and topology}

In this section we develop topological tools which we use 
to construct Souslin algebras with nowhere large subalgebras.
For these tools to be applicable also in cases where $\kappa>\aleph_1$
we have to generalize a few notions and facts concerning Baire Category.

We then formulate and prove the Reduction Lemma for t.e.r.s
which roughly states that (under favorable circumstances)
for a given tree of limit height with a t.e.r. on it, 
there is an extension of the tree such that the t.e.r. remains honest,
i.e. is preserved.

The constructions carried out in the subsequent sections
use $\dia$ hypotheses.
The Reduction Lemma and the surrounding lemmata can of course
also be applied in forcing constructions of generic $\kappa$-Souslin trees
once the hypothesis on cardinal arithmetic is satisfied in the ground model.

\subsection{Some basic descriptive set theory for weight $\mu$}
\label{sec:dst}

We introduce some variants of several classical topological notions
that we will use in the Souslin tree constructions in subsequent sections.
The spaces of interest are all homeomorphic to ${^\mu}\mu$, the analog of
Baire space $\mathscr{N}$ for some regular cardinal $\mu$.
Furthermore, the generalizations of some classical results as formulated here
only hold in case that $\mu{^{<\mu}} = \mu$.
(While in the case of $\mu=\aleph_0$ this follows from the axiom of choice,
it is an extra assumption extending {\sf ZFC + $\dia_{\mu^+}$}
if $\mu$ is uncountable.)

So in this section (and also in the remainder of the article)
$\mu$ denotes a regular cardinal satisfying $\mu{^{<\mu}} = \mu$.
Letting $\kappa := \mu^+$, the letter $T$ is used in this section for a
$\mu$-closed and $\kappa$-normal tree of height $<\kappa$
which has an isomorphic copy of ${^{<\mu}}\mu$ densely embedded
onto a club set of its levels.
Thus the spaces $[T]$ and ${^\mu}\mu$ are homeomorphic.

For a topological space $\mathscr{X}$
and a subset $M$ of $\mathscr{X}$ we say
that $M$ is $\mu$-$G_\delta$ if $M$ is the intersection
over a family of size $\mu$ of open subsets of $\mathscr{X}$.
The notion of $\mu$-$F_\sigma$ is defined analogously.
We start with the analog of the Baire Category Theorem.

\begin{thm}[Baire Category Theorem for weight $\mu$]
\label{thm:baire}
Assume that $\mu$ is a regular cardinal.
For each $\nu<\mu$ let $U_\nu$ be a dense open subset of ${^\mu\mu}$.
Then the intersection $\bigcap_{\nu<\mu} U_\mu$ is dense in ${^\mu\mu}$.
\end{thm}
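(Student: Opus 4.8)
The plan is to mimic the classical proof of the Baire Category Theorem, replacing countable sequences of approximations with sequences of length $\mu$ and using the $\mu$-closure of the tree $T$ (equivalently, of ${^{<\mu}}\mu$) to pass to the limit. Fix a nonempty basic open set $\hat{s}_0$ (with $s_0\in T$, or equivalently $s_0\in{^{<\mu}}\mu$); we must find a branch in $\hat{s}_0\cap\bigcap_{\nu<\mu}U_\nu$. We build an increasing sequence of nodes $(s_\nu)_{\nu<\mu}$ in $T$ together with an increasing sequence of ordinals such that $\hat{s}_{\nu+1}\subseteq U_\nu\cap\hat{s}_\nu$, using density and openness of $U_\nu$ at each successor step, and taking $s_\nu$ at limit stages $\nu<\mu$ to be the (unique, by the unique-limits condition) node extending $\bigcup_{\eta<\nu}s_\eta$.

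First I would make the successor step precise: given $s_\nu\in T$, the open set $U_\nu$ is dense, so $U_\nu\cap\hat{s}_\nu\neq\emptyset$; pick any $x$ in it; since $U_\nu$ is open and the $\hat{s}$ form a basis, there is $s_{\nu+1}\in T$ with $x\in\hat{s}_{\nu+1}\subseteq U_\nu$, and we may assume $s_{\nu+1}\geq_T s_\nu$ (either automatically, since $x\in\hat{s}_\nu$ forces $s_{\nu+1}$ comparable to $s_\nu$, or by extending further up the tree). The limit step is where the hypothesis $\mu^{<\mu}=\mu$ and $\mu$-closure of $T$ enter: for a limit $\nu<\mu$ the branch $\bigcup_{\eta<\nu}s_\eta$ has length of cofinality $<\mu$, hence is extended in the $\mu$-closed tree $T$, so a node $s_\nu$ on the appropriate level exists; moreover the levels on which the nodes live can be chosen increasing so that at the end $\bigcup_{\nu<\mu}s_\nu$ determines a genuine point of ${^\mu}\mu$ (one should arrange the lengths of the $s_\nu$ to be cofinal in $\mu$, which is possible since $\mathrm{cf}(\mu)=\mu$). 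Finally set $x^*:=\bigcup_{\nu<\mu}s_\nu\in[T]$; then $x^*\in\hat{s}_{\nu+1}\subseteq U_\nu$ for every $\nu<\mu$, and $x^*\in\hat{s}_0$, so $x^*\in\hat{s}_0\cap\bigcap_{\nu<\mu}U_\nu$, proving density.

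The main obstacle — really the only nontrivial point — is the limit stage: in the classical case one just takes a union of a countable descending chain of basic clopen sets and the ambient completeness of Baire space hands back a point, but here one needs the structural hypotheses ($T$ is $\mu$-closed, $\kappa$-normal with ${^{<\mu}}\mu$ densely embedded, and $\mu^{<\mu}=\mu$) to guarantee that (i) branches of length $<\mu$ actually have extensions in the tree, and (ii) the diagonal node exists on a suitable level and the construction does not run off the end of $T$ before reaching length $\mu$. I would handle (i) by invoking $\mu$-closure directly, and (ii) by bookkeeping: at step $\nu$ choose $s_{\nu+1}$ of length at least the $\nu$-th element of a fixed cofinal sequence in the height of $T$ above, which is legitimate because $T$ splits everywhere above any node on every higher level (normality) and has height $\geq\mu$ on the club set carrying the copy of ${^{<\mu}}\mu$. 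Everything else is a routine transcription of the standard argument, so I would state those parts briskly.
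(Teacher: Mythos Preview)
Your construction is correct and is exactly the diagonal argument behind the paper's two-line proof, which simply records that in ${^\mu\mu}$ the intersection of fewer than $\mu$ open sets is again open (so at each limit stage $\nu<\mu$ the set $\bigcap_{\eta<\nu}\hat{s}_\eta$ is itself a nonempty basic open $\hat{s}_\nu$), and leaves the rest as routine.

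You do, however, misattribute the hypotheses. The theorem assumes only that $\mu$ is regular --- neither $\mu^{<\mu}=\mu$ nor any extrinsic $\mu$-closure condition is stated, and neither is needed. At a limit $\nu<\mu$ the union $s_\nu:=\bigcup_{\eta<\nu}s_\eta$ has domain $\sup_{\eta<\nu}\mathrm{dom}(s_\eta)$, and this supremum is $<\mu$ purely because $\mu$ is regular and $\nu<\mu$; hence $s_\nu\in{^{<\mu}\mu}$ automatically, and $\hat{s}_\nu\neq\emptyset$ since every element of ${^{<\mu}\mu}$ extends to a total function $\mu\to\mu$. The assumptions ``$T$ is $\mu$-closed, $\kappa$-normal with ${^{<\mu}}\mu$ densely embedded, and $\mu^{<\mu}=\mu$'' that you invoke belong to the ambient section, where one works with more general trees $T$ of height $<\mu^+$; for the space ${^\mu\mu}$ itself they are either tautological or irrelevant. (Likewise the final bookkeeping --- arranging the lengths of the $s_\nu$ to be cofinal in $\mu$ --- is harmless but unnecessary: if they are not cofinal, then by regularity $s^*:=\bigcup_{\nu<\mu}s_\nu$ still lies in ${^{<\mu}\mu}$, and any point of $\hat{s}^*$ is in every $U_\nu$.)
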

\begin{proof}
First note that
the intersection of less than $\mu$ open subsets of ${^\mu\mu}$
is open.
From this it is easy to see, that $\bigcap_{\nu<\mu} U_\mu$ intersects every
non-empty open subset of ${^\mu\mu}$.
\end{proof}

Because of Theorem \ref{thm:baire},
we say that a set $M\subset[T]$ is $\mu$-\emph{comeagre} if it
contains a $\mu$-$G_\delta$ set
which is the intersection over a family of $\mu$ dense open sets,
and we say that it is $\mu$-\emph{meagre},
if its complement in $[T]$ is $\mu$-comeagre, i.e.,
if it is the union of a family of up to $\mu$ nowhere dense sets.
We furthermore call a topological space $\mathscr{X}$ \emph{$\mu$-Baire}
if every $\mu$-comeagre subset of $\mathscr{X}$ is dense in $\mathscr{X}$.
So e.g., the above theorem simply states
that ${^\mu\mu}$ is $\mu$-Baire.
On the other hand, every discrete space is $\mu$-Baire as well.

\begin{prp}
  \label{prp:baire_for_classes}
  Let $S$ be a $\mu$-closed $\kappa$-Souslin tree carrying the t.e.r. $\equiv$.
  Then there is a club $C \subset \kappa$ such that for every element
  $\alpha$ of $C$ with cofinality $\cf(\alpha)=\mu$
  letting $T=S\uhr\alpha$
  every $\equiv$-classe of $[T]$ is $\mu$-Baire
  (when equipped with the topology inherited from $[T]$).
\end{prp}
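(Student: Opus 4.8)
The plan is to exhibit a single club $C\subset\kappa$ that simultaneously witnesses the conclusion of Proposition~\ref{prp:irred-dense} (so that at relevant levels the t.e.r.\ classes are ``honestly approximated'' from below) and along which the tree $S$ looks sufficiently like $^{<\mu}\mu$. Concretely, for $\alpha$ a limit point of $C$ of cofinality $\mu$, I want to show that each $\equiv$-class $x/\!\!\equiv$ inside $[S\uhr\alpha]$, with the subspace topology, is $\mu$-Baire. The key structural observation, already recorded in the excerpt (Proposition~\ref{prp:irred-dense}.a and its proof), is that $x/\!\!\equiv\,=[S^x]$ where $S^x=\bigcup x/\!\!\equiv$ is itself a subtree of $S\uhr\alpha$; so the class is (homeomorphic to) the branch space of a tree, and what I need is a Baire Category Theorem \emph{for that tree}. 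The natural route is therefore: (1) build $C$ so that $S^x$ is, on a club of its own levels, isomorphic to a dense copy of $^{<\mu}\mu$ restricted to a club — i.e.\ $S^x$ is itself a tree of the kind described at the start of Section~\ref{sec:dst} — and then (2) quote (the proof of) Theorem~\ref{thm:baire}, whose argument only used that an intersection of $<\mu$ open sets is open, a property that persists in $[S^x]$ once $S^x$ is $\mu$-closed with $<\mu$-sized levels.

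First I would fix, for each node $s\in S$, the subtree $S^s:=\bigcup (x_s/\!\!\equiv)$ from Proposition~\ref{prp:irred-dense}; by honesty (item (iv) of the t.e.r.\ definition, equivalently Proposition~\ref{prp:irred-dense}.b) the set of level-$\alpha$ nodes equivalent to a given $s$ is \emph{dense} in $x_s/\!\!\simeq$, which is precisely the statement that, above each level, $S^s$ keeps splitting densely enough. The combinatorial core is the following claim: there is a club $C\subset\kappa$ such that for every $\alpha\in C$ with $\cf(\alpha)=\mu$ and every $s\in S_\alpha$, the subtree $S^s\cap (S\uhr\alpha)$ is $\mu$-closed and each of its levels from a club subset of $\alpha$ has size $<\mu$, and moreover it has a densely embedded copy of $^{<\mu}\mu$ on a club of levels. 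The size-$<\mu$ and $\mu$-closedness conditions are inherited directly from $S$ being $\kappa$-normal and $\mu$-closed once $\alpha$ is a limit point of a suitable club (using $\mu^{<\mu}=\mu$ to control the number of branches being extended). The ``densely embedded $^{<\mu}\mu$'' condition is where I use the honesty/density conclusion of Proposition~\ref{prp:irred-dense}.b together with the fact (stated at the start of Section~\ref{sec:dst}) that $S$ itself has this property: on a club, $S^s$ inherits a cofinal system of $\mu$-splitting between successive club levels, and one standard reflection/diagonalisation (intersecting $\mu$-many clubs, one per potential obstruction, exactly as in Proposition~\ref{prp:hom_souslinization}) yields the desired $C$.

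Once this claim is in hand, the proof closes quickly. For $\alpha\in C$ with $\cf(\alpha)=\mu$ and $T=S\uhr\alpha$, fix $x\in[T]$; then $x/\!\!\equiv\,=[S^x]$ is the branch space of a tree which, by the claim, is of exactly the type described in Section~\ref{sec:dst}, so $[S^x]$ is homeomorphic to $^\mu\mu$ and hence $\mu$-Baire by Theorem~\ref{thm:baire} — or, more directly, one rereads the two-line proof of Theorem~\ref{thm:baire}: an intersection of $<\mu$ basic open subsets $\hat t$ of $[S^x]$ is open because $S^x$ is $\mu$-closed with levels of size $<\mu$, hence a decreasing $\mu$-chain of dense open sets has dense intersection by a straightforward transfinite fusion along a branch of $S^x$ (using $\cf(\alpha)=\mu$ and $\mu$-closedness to take the required limits and land inside $[S^x]$). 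That completes the verification.

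The main obstacle I anticipate is step~(1): showing that the \emph{class} subtree $S^x$, not just the ambient tree $S$, retains on a club the ``densely embedded $^{<\mu}\mu$'' structure — i.e.\ that the $\mu$-splitting of $S$ is not concentrated \emph{across} distinct $\equiv$-classes in a way that starves some individual class. This is exactly what honesty is designed to prevent (Proposition~\ref{prp:irred-dense}.b says the level-$\alpha$ nodes equivalent to $s$ lie densely in $x_s/\!\!\simeq$), but turning that density statement into a uniform club $C$ that works for \emph{all} $s$ simultaneously — rather than a club depending on $s$ — will require the usual closing-off argument, and some care is needed because a priori there are $|S|=\mu^+=\kappa$ many nodes $s$ to handle; the point is that at level $\alpha<\kappa$ only the $<\mu$ (indeed $<\kappa$) nodes below $\alpha$ matter, so the diagonalisation is over $<\kappa$ many clubs, which is fine. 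I expect the rest to be routine bookkeeping of the sort already carried out in Proposition~\ref{prp:hom_souslinization}.
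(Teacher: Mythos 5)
Your overall strategy --- identify each class $x/\!\!\equiv$ with the branch space $[S^x]$ of the subtree $S^x=\bigcup x/\!\!\equiv$ and then prove a Baire category theorem for that tree --- is in the right spirit, and the closing-off bookkeeping you describe is unproblematic. But the obstacle you flag at the end is a genuine gap, and honesty does not remove it. Proposition \ref{prp:irred-dense}.b) only says that the nodes of the chosen level $T_\alpha$ which are equivalent to $s$ lie densely in the class $x_s/\!\!\simeq$; it says nothing about that class having no isolated points, and a singleton class satisfies it vacuously. If the represented subalgebra $\A$ is large on some $b\in\B^+$ (which is compatible with all your hypotheses, since by Theorem \ref{thm:large} large subalgebras are even nice), then above $b$ the $\equiv$-classes on limit levels are \emph{discrete}, as noted in Remark \ref{rem:local_large}. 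For such classes the subtree $S^x$ does not split at all along $\equiv$, so no club restriction will produce a densely embedded copy of ${}^{<\mu}\mu$ inside $S^x$, and your step (1) fails; moreover a single class may a priori combine isolated branches with a perfect part, so it need not be homeomorphic either to a discrete space or to ${}^\mu\mu$.

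The paper's proof therefore starts with a decomposition that your argument is missing: using Remark \ref{rem:local_large} (equivalently Lemma \ref{lm:dec_lar_sub}) one splits $S$ at a fixed level $S_\beta$ into a part $S^+$ where $\A$ is large --- there the limit classes are discrete, hence trivially $\mu$-Baire --- and a part $S^-$ where $\A$ is nowhere large. Only on $S^-$ does one run an argument like yours, and there the needed $\mu$-ary splitting of $\equiv$ on a club is obtained not from honesty but from nowhere-largeness via the Subtree Lemma \ref{lm:subtree}, exactly as in the Claim inside the proof that a nice subalgebra is $\infty$-nice if and only if it is nowhere large: if cofinally many levels contained a node whose class met the cone above some fixed predecessor in too few points, the pigeonhole principle and the Subtree Lemma would produce a node $r$ with $r\A=\B\uhr r$, contradicting nowhere-largeness. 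Each class then decomposes into a clopen discrete part and a clopen part homeomorphic to ${}^\mu\mu$; a dense open subset of the class contains all of the former and a dense open subset of the latter, so Theorem \ref{thm:baire} yields that the class is $\mu$-Baire. In short, your plan is correct for the nowhere large portion, but you need both the preliminary large/nowhere-large splitting and the Subtree Lemma argument (in place of Proposition \ref{prp:irred-dense}.b)) to justify the splitting structure of the class subtrees.
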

\begin{proof}
  Set $\B := \RO(S)$
  and let $\A$ denote the subalgebra represented by $\equiv$.
  Recall from Remark \ref{rem:local_large} that there is a unique element
  $b \in \B$ with the property that $b\A$ is nowhere large in $\B\uhr b$ and
  $(-b)\A$ is large $\B \uhr (-b)$.
  We will split up $S$ with the aid of $b$ and gain a decomposition
  of $[T\uhr\alpha]$ and its $\equiv$-classes in a discrete
  (and therefore $\mu$-Baire) part and
  and one that is $\mu$-Baire by resemblance to ${^\mu\mu}$.

  Let $Y$ be a set of optimal witnesses of largeness
  for $(-b)\A$ in $\B \uhr (-b)$
  as found in the proof of Lemma \ref{lm:dec_lar_sub}.
  Let $\beta<\kappa$ be large enough such that
  $Y\subset \langle S_\beta \rangle^\cm$,
  i.e. $X$ is contained in the subalgebra completely generated
  by the $\beta$th level of $S$.
  Now with
  $$G := \{s\in T_\beta \mid s\leq_\B -b\}$$
  and
  $$H := T_\beta \setminus G = \{s\in T_\beta \mid s\leq_\B b \}$$
  let
  $$S^+ := \bigoplus_{s\in G} T(s)$$
  and
  $$S^- := \bigoplus_{s\in H} T(s).$$
  In the part $S^+$ (where $\A$ is large)
  the limit classes of $\equiv$ are discrete subsets $[S^+\uhr\alpha]$.

  Now we consider the part $S^-$ (where $\A$ is nowhere large).
  Pick the club $C' \subset \kappa$ such that
  for all subsequent members $\gamma<\delta$ of $C'$
  and all pairs of nodes $s<t$ of $S^-$ 
  with  $\hgt_{S^-}(s)=\gamma$ and $\hgt_{S^-}(t)=\delta$
  there are $\mu$ further successors of $s$ equivalent to $t$,
  i.e.,
  on $S^-\uhr C$ the t.e.r. $\equiv$ ``splits'' $\mu$-ary
  immediately above every node.
  Our final club set is $C := \{\beta + \gamma \mid \gamma \in C'\}$.

  It is routine to check that for every member $\alpha$ of $C'$ with
  $\cf(\alpha)=\mu$ and $x\in [S^-\uhr\alpha]$ we have
  $(x/\!\!\equiv)\,\,\approx {^\mu\mu}$
  using the fact that $S$ is $\mu$-closed.
  But then for every $\alpha \in C$ of cofinality $\mu$
  and $x\in [S\uhr\alpha]$ the class $x/\!\!\equiv$
  is decomposed in a discrete $S^+$-part and a continuous $S^-$-part.
  If now for $\nu<\mu$ the set $U_\nu$ is open dense in $x/\!\!\equiv$,
  then it contains the whole discrete part
  and an open dense subset of the $S^-$-part.
  Then Theorem \ref{thm:baire} immediately states
  that the $\mu$-comeagre subset $\bigcap U_\nu$
  of $x/\!\!\equiv$ is dense.
\end{proof}

A subset $M\subset[T]$ has the \emph{$\mu$-Baire Property}
if there is an open set $U \subset [T]$, such that differences
$M \setminus U$ and $U \setminus M$ are both $\mu$-meagre.
We need to show that the $\mu$-Baire Property
is shared by somewhat more complicated sets
which appear to be the analogue of analytic subsets of a Polish space.
For this we use the fact
that the class of subsets of $[T]$ having the $\mu$-Baire Property
contains all open sets
and is closed under the following modification
of the Souslin Operation $\mathscr{A}$:
Assign to every sequence $s \in\, {^{<\mu}\mu}$
a subset $P_s$ of
$[T]$. Call this family $(P_s)$ a $\mu$-Souslin scheme.
Then the image of this $\mu$-Souslin scheme under our operation
$\mathscr{A}^\mu$ is given by
$$\mathscr{A}^\mu_\sigma(P_s) :=
\bigcup_{f\in {^{\mu}\mu}}\,\,\bigcap_{\nu<\mu} P_{f\upharpoonright\nu}.$$
Here we can assume that the Souslin scheme is regular
(i.e. $s \subset t \Rightarrow P_t \subseteq P_s$)
and continuous
(i.e. $P_t = \bigcap_{s<t} P_s$ for all limit nodes $t$). 

\begin{thm}[Nikodym's Theorem for weight $\mu$]
\label{thm:nikodym}
Let $\mu$ be a regular cardinal that satisfies $\mu^{<\mu}=\mu$
and let $T$ be a normal $\mu$-splitting and $\mu$-closed tree
of height $\alpha<\mu^+$, $\cf(\alpha)=\mu$.
Then the class of subsets of $[T]$ that possess the $\mu$-Baire Property
is closed under the operation $\mathscr{A}^\mu$.
\end{thm}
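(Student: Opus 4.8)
The plan is to mimic Nikodym's classical proof that the Souslin operation preserves the Baire property, with the genuinely new work concentrated at the limit ordinals below $\mu$. By the remark preceding the statement we may assume the given $\mu$-Souslin scheme $(P_s)_{s\in{}^{<\mu}\mu}$ is regular and continuous and that each $P_s$ has the $\mu$-Baire Property. Two preliminary facts are needed, both immediate from $\mu^{<\mu}=\mu$ together with $[T]$ having weight at most $\mu$ (so a basis of at most $\mu$ basic clopen sets $\hat r$): first, the $\mu$-meagre sets form a $\mu$-complete ideal and the $\mu$-Baire-Property sets a $\mu$-complete algebra containing the open sets, and an intersection of fewer than $\mu$ open sets is again open (exactly as in the proof of Theorem~\ref{thm:baire}); second, every $M\subseteq[T]$ has a \emph{$\mu$-hull}, by which I mean a $\mu$-Baire-Property set $\widehat M\supseteq M$ with $\widehat M\setminus B$ $\mu$-meagre for every $\mu$-Baire-Property set $B\supseteq M$. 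For the latter one puts $W_M:=\bigcup\{\hat r\mid \hat r\cap M\text{ is }\mu\text{-meagre}\}$, which is open, observes that $M\cap W_M$ is a union of at most $\mu$ many $\mu$-meagre sets and hence $\mu$-meagre, and sets $\widehat M:=([T]\setminus W_M)\cup(M\cap W_M)$; minimality follows by comparing, for a competing $B$ with open kernel $U$, the open set $\opin([T]\setminus U)$ with $W_M$.

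Now for $s\in{}^{<\mu}\mu$ put $A_s:=\bigcup\{\bigcap_{\nu<\mu}P_{f\upharpoonright\nu}\mid f\in{}^{\mu}\mu,\ s\subseteq f\}$, so that $A_\emptyset=\mathscr{A}^\mu_\sigma(P_s)$ is the set whose $\mu$-Baire Property we want, $A_s\subseteq P_s$, $A_s=\bigcup_{\xi<\mu}A_{s^\frown\langle\xi\rangle}$, and, by continuity of the scheme, $\bigcap_{\nu<\lambda}P_{f\upharpoonright\nu}=P_{f\upharpoonright\lambda}$ for limit $\lambda<\mu$. By recursion on $|s|$ I would build $\mu$-Baire-Property sets $\hat A_s$ with $A_s\subseteq\hat A_s\subseteq P_s$ that decrease along branches, obtained by intersecting the previously built sets with $\widehat{A_s}\cap P_s$ (and by taking intersections of fewer than $\mu$ of them at limit nodes). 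On successor nodes, since $\bigcup_{\xi<\mu}(\widehat{A_{s^\frown\langle\xi\rangle}}\cap P_{s^\frown\langle\xi\rangle})$ is a $\mu$-Baire-Property superset of $A_s$ while $\hat A_s\subseteq\widehat{A_s}$, the ``covering defect'' $N_s:=\hat A_s\setminus\bigcup_{\xi<\mu}\hat A_{s^\frown\langle\xi\rangle}$ is $\mu$-meagre by hull-minimality; granting the same for limit nodes (see below), and since there are only $\mu^{<\mu}=\mu$ nodes $s$ altogether, the set $N:=\bigcup_s N_s$ is $\mu$-meagre. Finally, given $x\in\hat A_\emptyset\setminus N$ one builds a branch $f\in{}^{\mu}\mu$ with $x\in\hat A_{f\upharpoonright\nu}$ for all $\nu<\mu$ --- at successors because $x\notin N_{f\upharpoonright\nu}$, at limits because $\hat A_{f\upharpoonright\lambda}$ was arranged to be reached, modulo a $\mu$-meagre subset of $N$, by the intersection of the hulls below it together with the factor $\widehat{A_{f\upharpoonright\lambda}}\cap P_{f\upharpoonright\lambda}$ --- and then $x\in\bigcap_{\nu<\mu}\hat A_{f\upharpoonright\nu}\subseteq\bigcap_{\nu<\mu}P_{f\upharpoonright\nu}\subseteq A_\emptyset$. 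Hence $\hat A_\emptyset\setminus A_\emptyset\subseteq N$, so $A_\emptyset=\hat A_\emptyset\setminus(\hat A_\emptyset\setminus A_\emptyset)$ is a difference of a $\mu$-Baire-Property set and a $\mu$-meagre set, and therefore has the $\mu$-Baire Property.

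The main obstacle, which has no analogue in the classical case $\mu=\aleph_0$ (there being no limits below $\omega$), is precisely the limit steps of this last construction: the limit hull $\hat A_{f\upharpoonright\lambda}$ must be small enough that the later covering defects stay $\mu$-meagre, yet large enough to be caught by the intersection of the fewer-than-$\mu$ hulls already built. Reconciling the two is where all three hypotheses are really used: $\mu^{<\mu}=\mu$ makes an intersection of fewer than $\mu$ $\mu$-Baire-Property sets again $\mu$-Baire-Property (so it differs from its own $\mu$-hull only by a $\mu$-meagre set), continuity of the scheme keeps such an intersection inside $P_{f\upharpoonright\lambda}$, and $\mu$-closedness of $T$ prevents the downward chase from stalling, since a decreasing sequence of fewer than $\mu$ basic open sets of $[T]$ corresponds to a branch of $T$ of length $<\mu$, which is extended. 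A cleaner alternative that sidesteps the hull bookkeeping is to verify instead the local dichotomy ``below every basic open set of $[T]$ the set $A_\emptyset$ is either $\mu$-meagre or $\mu$-comeagre'' by playing the unfolded Banach--Mazur game of length $\mu$ in which one player shrinks basic open sets (kept nonempty at limit rounds by $\mu$-closedness of $T$) while the other also guesses the branch $f$; the payoff ``$x\in\bigcap_{\nu<\mu}P_{f\upharpoonright\nu}$'' is closed, so the game is determined by a rank argument on positions, and a winning strategy for either player yields one side of the dichotomy via the Banach--Mazur correspondence, for which Theorem~\ref{thm:baire} supplies the needed generalized Baire category input.
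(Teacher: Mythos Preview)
Your approach is exactly the one the paper intends --- its own proof is the single sentence ``Check that the proof as carried out for the case $\mu=\omega$ in \cite[Section 29.C]{kechris} including all references also works under our circumstances,'' so by actually writing out the Marczewski-hull argument and isolating the limit stages as the new phenomenon you have already gone further than the paper does.

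That said, your handling of the limit defect has a real gap. What you must show is that $N_t=\bigl(\bigcap_{s<t}\hat A_s\bigr)\setminus\hat A_t$ is $\mu$-meagre for each limit node $t$; with your definitions and continuity of the scheme this reduces to $\bigl(\bigcap_{s<t}\hat A_s\bigr)\setminus\widehat{A_t}$ being $\mu$-meagre. But the hull property only gives the \emph{opposite} direction: for any $\mu$-BP set $B\supseteq A_t$ one obtains $\widehat{A_t}\setminus B$ meagre, whereas here you need $B\setminus\widehat{A_t}$ meagre with $B=\bigcap_{s<t}\hat A_s$. Your parenthetical ``so it differs from its own $\mu$-hull only by a $\mu$-meagre set'' is true but beside the point: that refers to the hull of $\bigcap_{s<t}\hat A_s$, not to $\widehat{A_t}$, and since in general $A_t\subsetneq\bigcap_{s<t}A_s$ (membership in the latter only gives, for each $s<t$, \emph{some} $f_s\supseteq s$, with no reason the $f_s$ cohere to a single $f\supseteq t$), these two hulls need not agree modulo meagre. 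The unfolded Banach--Mazur alternative you sketch is a reasonable route but is also incomplete as stated: the payoff is closed only if the $P_s$ are closed (not merely $\mu$-BP), and determinacy of closed games of length $\mu$ is itself a nontrivial claim that would need to be argued. In fairness, the paper's one-line deferral does not address the limit stages either; you have simply made visible a point the paper leaves to the reader.
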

\begin{proof}
Check that the proof as carried out
for the case $\mu=\omega$ in \cite[Section 29.C]{kechris}
including all references also works under our circumstances.
\end{proof}

When constructing a homogeneous Souslin tree,
it is convenient to have arbitrarily many symmetries
in the initial segments of the tree.
The following generalization of a lemma of Kurepa (cf.~\cite{kurepa})
provides this.
We will also apply it in the proof of the Reduction Lemma.
\begin{thm}[Kurepa Lemma for regular cardinals]
\label{thm:kurepa}
Let $\mu$ be a regular cardinal that satisfies $\mu^{<\mu}=\mu$,
and let $S,\ T$ be normal $\mu$-splitting and $\mu$-closed trees
of height $\alpha<\mu^+$, $\cf(\alpha)=\mu$.
Then $S$ and $T$ are isomorphic.
\end{thm}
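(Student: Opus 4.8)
The plan is to construct a tree isomorphism $f\colon S\to T$ by transfinite recursion on the levels, exactly along the lines of Kurepa's classical argument for $\mu=\omega$, with the cardinal hypothesis $\mu^{<\mu}=\mu$ playing the role that the axiom of choice plays there. Before starting the recursion I would record, using the standing assumptions of this section (each of $S$ and $T$ carries a copy of ${^{<\mu}\mu}$ densely embedded onto a club of its levels), that every level $S_\beta$, $T_\beta$ with $0<\beta<\alpha$ has cardinality exactly $\mu$ --- at successor levels this is immediate from $\mu$-splitting, at a limit level $\beta$ because on the one hand a branch of length $\beta$ is pinned down by its values on a cofinal set of levels while on the other hand normality forces the extended branches to be dense in a branch space of weight $\mu$. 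Consequently $|S|=|T|=\mu$, above every node lie $\mu$ cofinal branches, and for every limit $\lambda\le\alpha$ with $\cf\lambda=\mu$ the space $[S\uhr\lambda]$ (resp.\ $[T\uhr\lambda]$) is homeomorphic to ${^\mu\mu}$.

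The recursion maintains an isomorphism $f\uhr(S\uhr\beta)\colon S\uhr\beta\to T\uhr\beta$. At a successor step $\beta\mapsto\beta+1$ I would, for each $s\in S_\beta$, fix an arbitrary bijection between the $\mu$-element sets $\NF_S(s)$ and $\NF_T(f(s))$ and let $f$ respect it; this always works and yields an isomorphism up to level $\beta+1$. At a limit step $\lambda$ with $\cf\lambda<\mu$ there is no freedom: since $T$ is $\mu$-closed, \emph{every} branch of length $\lambda$ through $T\uhr\lambda$ is extended to a node of $T_\lambda$, and likewise in $S$, so $f\uhr(S\uhr\lambda)$ carries the set of all $\lambda$-branches of $S\uhr\lambda$ bijectively onto that of $T\uhr\lambda$, which by the unique-limits condition is precisely a bijection $S_\lambda\to T_\lambda$ extending $f$.

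The one genuinely delicate point --- and the step I expect to be the main obstacle --- is a limit level $\lambda$ with $\cf\lambda=\mu$. There $S_\lambda$ corresponds, via unique limits, not to all $\lambda$-branches of $S\uhr\lambda$ but only to the set $E_S$ of those that happen to be extended, and $E_S$, although dense in $[S\uhr\lambda]\cong{^\mu\mu}$ and of size $\mu$, is a $\mu$-meagre proper subset of the $2^\mu$-sized space of all $\lambda$-branches; likewise $E_T$. Hence, in contrast with the case $\mu=\omega$ where one may simply take full limits everywhere, the partial isomorphism reaching level $\lambda$ must already carry $E_S$ onto $E_T$, so $f$ cannot be built blindly at the preceding stages. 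I would instead interleave the level-by-level recursion below $\lambda$ with a back-and-forth matching of $\mu$ steps (nested, by well-founded recursion, over the smaller limit levels of cofinality $\mu$ occurring below $\lambda$): enumerating $E_S$ and $E_T$ in order type $\mu$, at the $\xi$-th step one makes sure the $\xi$-th member of $E_S$ is routed, through the part of $f$ committed so far, to a still unclaimed member of $E_T$, and symmetrically. The density of $E_S$ and $E_T$, the fact that above every node there are $\mu$ cofinal branches, and the bound $\mu^{<\mu}=\mu$ (which keeps the set of commitments below size $\mu$ at every stage) together guarantee that a free target always exists; this is exactly where the Baire Category Theorem for weight $\mu$ (Theorem~\ref{thm:baire}), i.e.\ the $\mu$-Baire-ness of the branch spaces involved, is invoked. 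Once $f\uhr(S\uhr\lambda)$ has been arranged to map $E_S$ onto $E_T$ it extends uniquely to a bijection $S_\lambda\to T_\lambda$, and carrying the recursion through all $\beta\le\alpha$ produces the required isomorphism $f\colon S\to T$.
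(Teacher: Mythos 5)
You have located the right difficulty and the right tool, but the way you organize the back-and-forth leaves a genuine gap. Your plan attaches a separate $\mu$-step matching to each limit level of cofinality $\mu$, interleaved with the level-by-level recursion below it, ``nested by well-founded recursion'' over the smaller such levels. Consider, however, a level $\lambda\le\alpha$ of cofinality $\mu$ which is itself a limit of levels of cofinality $\mu$ (e.g.\ $\lambda=\mu\cdot\mu$, the supremum of the levels $\mu\cdot(\nu+1)$ for $\nu<\mu$): by the time the recursion reaches $\lambda$, every successor-level bijection below $\lambda$ has already been consumed by the inner matchings for those smaller levels, so $f\uhr(S\uhr\lambda)$ is completely determined and there is no freedom left with which to route $E_S$ onto $E_T$ at $\lambda$. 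So the matchings cannot be run independently in increasing order; each inner matching must already respect routing commitments made on behalf of \emph{all} the (possibly $\mu$ many) cofinality-$\mu$ levels above it, including the top level $\alpha$. Your sketch does not describe this global bookkeeping, and it is exactly where the work lies.

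The paper's proof shows that once this coordination is made global, the level-by-level scaffolding can be discarded entirely. One fixes dense sets $X\subset[S]$ and $Y\subset[T]$ of \emph{cofinal} branches, each of size $\mu$, and runs a single back-and-forth of length $\mu$ producing a bijection $\overline{\varphi}:X\to Y$; the tree isomorphism is then simply $\varphi(x\uhr\beta):=\overline{\varphi}(x)\uhr\beta$. Density of $X$ and $Y$ makes $\varphi$ total and onto (every node --- in particular every node of an intermediate limit level of cofinality $\mu$ --- lies on a matched cofinal branch), and well-definedness and injectivity are secured by choosing each newly matched branch so that it splits off from all of the fewer than $\mu$ previously matched ones at a definite successor level; this is exactly where $\mu$-splitting, $\cf(\alpha)=\mu$ and $\mu^{<\mu}=\mu$ enter. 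No intermediate level needs separate treatment. Note also that the Baire Category Theorem for weight $\mu$ is not needed anywhere here: finding a fresh target is a matter of density plus counting (fewer than $\mu$ occupied branches against $\mu$ immediate successors of a node), not of comeagreness.
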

\begin{proof}
We show that the classical back-and-forth argument of Kurepa
also works in the general context
of a regular, possibly uncountable cardinal $\mu = \mu^{<\mu}$.

Let $\mu,\ \alpha,\ S$ and $T$ be as stated in the lemma.
Then $|S| = |T| = \mu$.
So we can pick dense sets $X \subset [S]$ and $Y \subset [T]$,
both of cardinality $\mu$.
Enumerate $X$ and $Y$ by $(x_\nu)_{\nu<\mu}$ and $(y_\nu)_{\nu<\mu}$
respectively.
We construct two maps, a bijection $\overline{\varphi}: X \to Y$ and the
closely related tree isomorphism $\varphi: S \to T$,
such that for all $x\in X$ we will have that
$\overline{\varphi}(x) = \varphi"x$.
The even ordinals $<\mu$ will count the ``forth'' steps
while the ``back'' steps will have odd numbers.
For any ordinal $\nu < \mu$ let
$\overline{\varphi}: X_\nu \to Y_\nu$ be the bijection constructed after
stage $\nu$,
i.e., $X_\nu$ and $Y_\nu$ are the subsets of $X$ and $Y$ respectively
that contain the elements which have been considered in the construction
stages $0¸\ldots,\nu$.

We start the construction by assigning $\overline{\varphi}(x_0)=y_0$ and
$\varphi(x_0\uhr\gamma) = y_0 \uhr \gamma$ for all $\gamma<\alpha$.

We only describe the odd successor steps of the construction,
the even successor steps being symmetric.
So let $\nu<\mu$ by odd (and $\nu-1$ even).
Let $\xi$ be minimal such that
$y_\xi \in Y \setminus Y_{\nu-1}$ and set
$$\gamma :=
\sup\{\beta < \alpha \mid\, (\exists y\in Y_{\nu-1})\,
y \uhr \beta = y_\xi \uhr \beta\}.$$
Since $\alpha$ is of cofinality $\mu > |Y_{\nu-1}|$, we have that $\gamma < \alpha$.
In order to choose a $\overline{\varphi}$-pre-image of $y_\xi$ we have to pick some
branch in $X$ going through $s := \varphi^{-1}(y_\xi \uhr \gamma)$.
But we furthermore have to ensure that no successor of $s$ is an element of some
$x \in X_{\nu-1}$ that is already occupied.
But,
as our trees are $\mu$-splitting and we have $|X_{\nu-1}| < \nu < \mu$,
this is not a problem.
So let $\zeta < \mu$ be minimal such that $x_\zeta \uhr \gamma = s$
and for all $x \in X_{\nu-1}$ we have
$x_\zeta \uhr (\gamma + 1) \ne x \uhr (\gamma + 1)$.
We assign $\overline{\varphi}(x_\zeta) = y_\xi$,
and $\varphi(x_\zeta \uhr \beta) = y_\xi \uhr \beta$ for all $\beta<\alpha$
and define $X_\nu = X_{\nu-1} \cup \{x_\zeta\}$ and $Y_\nu = Y_{\nu-1} \cup \{y_\xi\}$.

If $\lambda < \mu$ is a limit ordinal we just collect what has been fixed so
far and set
$X_\lambda = \bigcup_{\nu < \lambda} X_\nu$ and
$Y_\lambda = \bigcup_{\nu < \lambda} Y_\nu$.

Finally, it is easy to check, that this construction does not break down and
yields a bijective map $\overline{\varphi}: X \to Y$ and an associated
tree isomorphism $\varphi: S \to T$.
\end{proof}

By now we have collected enough facts from descriptive set theory
to prove the Reduction Lemma \ref{lm:reduction} and carry out the
constructions in Sections \ref{sec:rigid-non-rigid} and \ref{sec:unique_non-nice}.

The final two lemmata of this section will be used
in Section \ref{sec:no_schroeder} to design a more involved interplay between
the subalgebra structure and the endomorphisms of the $\kappa$-Souslin
algebra.

\begin{prp}
\label{prp:comeagre_images}
If $X\subseteq[T]$ is $\mu$-comeagre
and $\varphi:[T]\to X$ is continuous, onto and open,
then the images of $\mu$-comeagre subsets of $[T]$
under $\varphi$ are $\mu$-comeagre.
\end{prp}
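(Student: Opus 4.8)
The plan is to reduce the statement to two pieces: that $\varphi(M)$ has the $\mu$-Baire Property, and that continuous open surjections move $\mu$-category around in the expected way. For the second piece I would only use the routine observations valid for an arbitrary continuous, open, surjective map $f$: the $f$-preimage of a dense open set is dense open (it is open by continuity and meets every non-empty open set by openness of $f$ together with surjectivity), hence $f$-preimages of $\mu$-comeagre and of $\mu$-meagre sets are again $\mu$-comeagre resp.\ $\mu$-meagre, and the range of $f$ is $\mu$-Baire whenever the domain is. Applied to $\varphi$ this gives that $X$ is $\mu$-Baire; being $\mu$-comeagre it is moreover dense in $[T]$, so a subset of $X$ is $\mu$-meagre in $X$ iff it is $\mu$-meagre in $[T]$.

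The real work is to see that $A:=\varphi(M)$ has the $\mu$-Baire Property. Since a superset of a $\mu$-comeagre set is $\mu$-comeagre, we may replace $M$ by a dense $\mu$-$G_\delta$ subset, so assume $M=\bigcap_{\nu<\mu}U_\nu$ with the $U_\nu$ dense open, and put $D_\nu:=\{t\in T\mid \hat t\subseteq U_\nu\}$, so that $\bigcup_{t\in D_\nu}\hat t=U_\nu$. Then
$$P:=\Bigl\{\bigl(x,(t_\nu)_{\nu<\mu}\bigr)\in[T]\times\textstyle\prod_{\nu<\mu}D_\nu\ \Bigm|\ t_\nu\in x\text{ for all }\nu<\mu\Bigr\}$$
is closed in $[T]\times\prod_{\nu<\mu}D_\nu$ (each condition $t_\nu\in x$ is clopen, the $\hat s$ being clopen in $[T]$), and the projection to the first coordinate maps $P$ onto $M$. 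By $\mu^{<\mu}=\mu$ the product $[T]\times\prod_{\nu<\mu}D_\nu$ is, after the harmless restriction of $T$ to the club of levels onto which ${}^{<\mu}\mu$ is densely embedded, the branch space of a $\mu$-closed, $\mu$-splitting tree of height of cofinality $\mu$ --- $\mu$-splitting because of the $[T]$-factor --- hence homeomorphic to ${}^\mu\mu$ by the Kurepa Lemma~\ref{thm:kurepa}. So, via $\varphi$ composed with that projection, $\varphi(M)$ is a continuous image of a closed subset of (a copy of) ${}^\mu\mu$.

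Next I would invoke the weight-$\mu$ analogue of the classical fact that a continuous image of a closed subset of Baire space is the value of the Souslin operation on a family of closed sets: one checks that the standard proof (e.g.\ the one in \cite[\S29]{kechris}, together with the lemmata it cites) goes through verbatim here, using $\mu$-closedness to keep the finally-selected branch inside the domain and the regularity of $[T]$ to pin down the image point. This yields a regular, continuous $\mu$-Souslin scheme $(P_s)_{s\in{}^{<\mu}\mu}$ whose members are closed subsets of $[T]$ --- say $P_s$ the closure of the image of the basic open set coded by $s$ --- with $\mathscr{A}^\mu(P_s)=A$. As closed sets trivially possess the $\mu$-Baire Property, Nikodym's Theorem~\ref{thm:nikodym} now gives that $A=\varphi(M)$ has the $\mu$-Baire Property.

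The conclusion is then quick. Put $B:=X\setminus A$; since $\varphi^{-1}(A)\supseteq M$ is $\mu$-comeagre, $\varphi^{-1}(B)$ is $\mu$-meagre. Were $B$ not $\mu$-meagre, the $\mu$-Baire Property would give a non-empty open $V\subseteq X$ with $V\cap A$ $\mu$-meagre, and then $\varphi^{-1}(V)=\varphi^{-1}(V\cap A)\cup\bigl(\varphi^{-1}(V)\cap\varphi^{-1}(B)\bigr)$ would be $\mu$-meagre --- impossible, since $\varphi^{-1}(V)$ is non-empty (as $\varphi$ is onto $X\supseteq V$) and open in the $\mu$-Baire space $[T]$. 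Hence $B$ is $\mu$-meagre, i.e.\ $\varphi(M)$ is $\mu$-comeagre. I expect the one genuinely non-routine ingredient to be the middle step --- transporting the classical Souslin-scheme representation of continuous images of $\mu$-$G_\delta$ sets to uncountable weight $\mu$ --- which, exactly as in the proof of Nikodym's Theorem, is where $\mu^{<\mu}=\mu$ and $\mu$-closedness are essential; the reduction at the start and the category-transfer at the end are routine.
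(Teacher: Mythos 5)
Your proof is correct and follows essentially the same route as the paper's: both establish that $\varphi''M$ has the $\mu$-Baire Property by exhibiting it as $\mathscr{A}^\mu$ of a scheme of closed sets and invoking Nikodym's Theorem \ref{thm:nikodym}, and both then conclude via the fact that $\mu$-meagre subsets of $X$ have $\mu$-meagre $\varphi$-preimages. The only difference is that you spell out (via the closed set $P$ in the product and the Kurepa Lemma) why the $\mu$-Souslin scheme representing $\varphi''M$ exists, a step the paper simply asserts.
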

\begin{proof}
It is clear that every dense subset $D$ of $[T]$ has a dense $\varphi$-image
as the map is onto and continuous. It follows by openness of $\varphi$ that nowhere dense subsets of
$X$ have nowhere dense inverse images. Since the operations of taking unions
and taking pre-images commute, we also have $\mu$-meagre inverse images for
$\mu$-meagre subsets of $X$.
Form this we deduce the claim of the proposition.

So let $M \subset [T]$ be comeagre.
Without loss of generality we can even assume that $M$ is $\mu$-$G_\delta$.
So there is a regular and continuous Souslin scheme $(P_s)$ consisting of
closed sets $P_s \subset [T]$ such that $\mathscr{A}^\mu_s = \varphi" M$.
By the Nikodym's Theorem \ref{thm:nikodym}
we then know that $\varphi" M$ has the $\mu$-BP.
So assume towards a contradiction that there is some $U\subset [T]$ open such
that the intersection of $U$ and $\varphi" M$ is $\mu$-meagre.
For then $\varphi^{-1}"(U\cap\varphi" M) \supset M \cap \varphi^{-1}"U$ is
$\mu$-meagre, contradicting the fact that $M$ is $\mu$-comeagre.
\end{proof}

The proposition just proven in conjunction with the following lemma
will be used to implement an isomorphism between the $\kappa$-Souslin algebra
under construction and one of its $\infty$-nice subalgebras.

\begin{lm}
\label{lm:nice_proj}
Let $\equiv$ be a nice t.e.r. on $T$.
Then the canonical mapping $\pi: T \to T/\!\!\equiv$
induces a continuous map $\overline{\pi}: [T] \to [T/\!\!\equiv]$
and $\overline{\pi}$ is onto and open.
\end{lm}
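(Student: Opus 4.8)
The plan is to unwind the definitions and use the niceness of $\equiv$ to handle the two nontrivial assertions — surjectivity and openness — while continuity is immediate. First I would observe that a coset $s/\!\!\equiv$ determines a node $\pi(s)$ of $T/\!\!\equiv$, that $s<_T t$ implies $\pi(s)<_{T/\equiv}\pi(t)$ by compatibility of $\equiv$ with $<_T$, and that $\pi$ respects heights by property (i) of a t.e.r.; consequently, for a cofinal branch $x\in[T]$ the set $\pi"x$ is a chain in $T/\!\!\equiv$ which is downward closed in the initial segment it generates, so its downward closure $\overline\pi(x):=\{a\in T/\!\!\equiv\mid(\exists s\in x)\,a\le_{T/\equiv}\pi(s)\}$ is a branch; since $\pi$ is level-preserving and $x$ meets every level of $T$, the branch $\overline\pi(x)$ is cofinal in $T/\!\!\equiv$. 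For continuity it suffices to check that preimages of the subbasic open sets $\hat a=\{y\in[T/\!\!\equiv]\mid a\in y\}$ are open: if $a=s/\!\!\equiv$ then $\overline\pi^{-1}(\hat a)=\bigcup_{t\in a}\hat t$, an open set. (Here I use the convention, fixed in the remark following Proposition~\ref{prp:irred-dense}, of writing $\equiv$ both for the t.e.r.\ and for the induced relation on branches.)

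Next I would prove that $\overline\pi$ is onto. Given a cofinal branch $b\in[T/\!\!\equiv]$, build by recursion an increasing chain of nodes $s_\alpha\in T$ with $\pi(s_\alpha)=b\cap(T/\!\!\equiv)_\alpha$: at successor steps $\alpha+1$, having $s_\alpha$ with $\pi(s_\alpha)$ the element of $b$ at level $\alpha$, niceness of $\equiv$ — applied to $s:=s_\alpha'$ for some representative $s_\alpha'$ of the target class $b\cap(T/\!\!\equiv)_{\alpha+1}$ with $s_\alpha'{}^-\equiv s_\alpha$ — yields a successor $s_{\alpha+1}>_T s_\alpha$ in the right $\equiv$-class; at limit steps $\lambda$ the chain $(s_\alpha)_{\alpha<\lambda}$ is a branch of length $\lambda$, which by the unique-limits clause in the normality of $T/\!\!\equiv$ has its image $\pi$ landing inside the unique node of $b$ at level $\lambda$, and since $T$ is normal (itself has unique limits when $\lambda$ is a limit and otherwise the class is determined) there is a node $s_\lambda\in T_\lambda$ extending the chain with $\pi(s_\lambda)$ equal to the $\lambda$-th node of $b$ — more carefully, honesty of $\equiv$ guarantees that no dispute blocks this, so such an $s_\lambda$ exists. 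The resulting $x:=\{s_\alpha\uhr\gamma\mid\alpha,\gamma\}$ is a cofinal branch of $T$ with $\overline\pi(x)=b$.

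For openness, it is enough to show that $\overline\pi"\hat s$ is open for each $s\in T$, since these form a basis. I claim $\overline\pi"\hat s=\widehat{\pi(s)}$. The inclusion $\subseteq$ is clear. Conversely, if $b\in\widehat{\pi(s)}$, i.e.\ $\pi(s)\in b$, then run the recursive construction of the previous paragraph but start it at $s_{\hgt s}:=s$; this produces a branch $x\in[T]$ through $s$ with $\overline\pi(x)=b$, so $b\in\overline\pi"\hat s$. Thus $\overline\pi"\hat s=\widehat{\pi(s)}$ is basic open, and $\overline\pi$ is an open map.

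I expect the main obstacle to be the limit stages of the surjectivity/openness recursion: one must verify that the partial chain one has built really does extend to a node of $T$ mapping into the prescribed limit node of $b$. This is exactly where honesty (or, in the end, niceness, which is the absence of disputes) is doing the work — without it a branch of $T$ could be ``trapped below'' a class of $T/\!\!\equiv$ that it can never reach — and the unique-limits clauses in the normality of both $T$ and $T/\!\!\equiv$ are needed to see that the extension, once it exists, is the unique one and sits at the correct class. The successor steps, by contrast, are a direct single application of the definition of niceness, and continuity is a one-line check.
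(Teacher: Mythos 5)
Your overall strategy --- continuity by computing preimages of basic open sets, surjectivity and openness by recursively lifting a branch of $T/\!\!\equiv$ to a branch of $T$ with niceness doing the work at successor steps --- is the right one, and both the continuity computation and the successor step are correct. The gap is in the limit step of the lifting recursion. You assert that normality of $T$, together with honesty, yields a node $s_\lambda\in T_\lambda$ extending the chain $(s_\beta)_{\beta<\lambda}$ already built. But normality only contains the \emph{unique limits} clause: a branch of limit length has \emph{at most} one extension, not necessarily any; and honesty/niceness is a statement about triples of nodes and says nothing about whether a given branch of limit length is extended at all. In the trees to which this lemma is applied (initial segments $T\uhr\alpha$ of a $\kappa$-Souslin tree under construction, with $\cf(\alpha)=\mu$), interior limit levels $\lambda<\alpha$ of cofinality $\mu$ do occur, and at such levels only $\mu$ of the up to $\mu^\mu$ many branches of length $\lambda$ were extended when the tree was built; a chain threaded greedily through every level below $\lambda$ will in general not be one of them, and your recursion gets stuck there.

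The repair uses the hypothesis you never invoke, namely that $T$ is $\mu$-closed, and runs the recursion not through every level but along a continuous increasing sequence $(\delta_\xi)_{\xi<\mu}$ cofinal in $\hgt(T)$. Successor steps $\delta_\xi\to\delta_{\xi+1}$ are exactly your niceness argument, and they work even when $\delta_{\xi+1}$ is a limit level of $T$: pick any $s'$ in the target class $b_{\delta_{\xi+1}}$, observe $s'\uhr\delta_\xi\equiv t_\xi$ by compatibility, and apply niceness to get $t_{\xi+1}>t_\xi$ with $t_{\xi+1}\equiv s'$. At a limit stage $\xi$ of the recursion the chain built so far has length $\delta_\xi=\sup_{\eta<\xi}\delta_\eta$, which has cofinality $\cf(\xi)<\mu$, so $\mu$-closure of $T$ extends it to some $t\in T_{\delta_\xi}$; the unique-limits property of the normal quotient tree $T/\!\!\equiv$ then forces $\pi(t)$ to be the node of $b$ at level $\delta_\xi$, so the recursion stays inside $\pi^{-1}(b)$. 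The downward closure of $\{t_\xi\mid\xi<\mu\}$ is the desired cofinal branch, and the same corrected recursion started at $s$ gives $\overline\pi"\hat s=\widehat{\pi(s)}$ and hence openness.
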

\begin{proof}
This a straight forward application of the niceness of $\equiv$.
\end{proof}

\subsection{The Reduction Lemma}
\label{sec:reduction}

The Reduction Lemma for t.e.r.s stated below
is a simple observation,
but it will be crucial in $\kappa$-Souslin algebra constructions
that implement nowhere large subalgebras.
It asserts that we can reduce any comeagre subset $M$ of
$[T\uhr\alpha]$ to a comeagre subset
from which we can choose the new level $T_\alpha$ in a way that a given
t.e.r. extends to $T\uhr(\alpha+1)$.
This formulation makes it very flexible, e.g.,
it is no problem to combine the construction of subalgebras with that of
endomorphisms as performed in Section \ref{sec:no_schroeder}.

Recall that given an equivalence relation $\equiv$
on a topological space $\mathscr{X}$
we say that a subset $M\subset \mathscr{X}$ is \emph{suitable for $\equiv$}
if for every  equivalence class $x/\!\!\equiv$ the intersection 
with $M$ is either empty or dense in $x/\!\!\equiv$
(viewed as a subspace of $\mathscr{X}$).
The central idea of the proof will be
to sort out those classes which are not hit by $M$
in a dense subset
and then check that the remaining classes
still form a $\mu$-comeagre set.

\begin{lm}[Reduction Lemma]
\label{lm:reduction}
Assume that $\mu$ is regular such that $\mu^{<\mu}=\mu$ and $\kappa=\mu^+$.
Let $T$ be a $\kappa$-normal and $\mu$-closed tree
of height $\alpha<\kappa$ with $\cf(\alpha)=\mu$
carrying a t.e.r. $\equiv$.
We denote the induced equivalence relation on $[T]$ also by $\equiv$
and assume that for all branches $x\in[T]$ the space
$x/\!\!\equiv$ is $\mu$-Baire.
Furthermore let $M$ be a $\mu$-comeagre subset of $[T]$.
Then the set
$$M' := \{x \in M \mid\,
x/\!\!\equiv\,\,\cap M\text{ is dense in }x/!\!\equiv\,\}$$
is $\mu$-comeagre in $[T]$ and suitable for $\equiv$.
\end{lm}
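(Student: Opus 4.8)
The plan is to show that $M'$ is $\mu$-comeagre by exhibiting it as the complement (within $M$, which is already comeagre) of a $\mu$-meagre ``bad'' set, and then to verify suitability essentially by unwinding the definition. Suitability is the easy half: if $x\in M'$ and $y\equiv x$, then $y/\!\!\equiv\,=x/\!\!\equiv$, and by definition of $M'$ the set $M\cap (x/\!\!\equiv)$ is dense in $x/\!\!\equiv$; but one checks that $M'\cap(x/\!\!\equiv)$ contains (indeed equals) $M\cap(x/\!\!\equiv)$, because for any $z\in M\cap(x/\!\!\equiv)$ the class $z/\!\!\equiv$ is the same class and the same density condition holds, so $z\in M'$. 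Hence $M'\cap(x/\!\!\equiv)=M\cap(x/\!\!\equiv)$ is dense. So $M'$ is suitable for $\equiv$.

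For the comeagreness, first I would write $M\supseteq\bigcap_{\nu<\mu}U_\nu$ with each $U_\nu$ dense open in $[T]$; without loss of generality $M=\bigcap_{\nu<\mu}U_\nu$ is itself $\mu$-$G_\delta$. The key step is to relate density of $M\cap(x/\!\!\equiv)$ inside the single class to a genuinely topological condition on $[T]$ that the operation $\mathscr{A}^\mu$ can see. For each $s\in T$ of height in the relevant club, the basic open set $\hat s$ meets $x/\!\!\equiv$ in $\hat s\cap(x/\!\!\equiv)$, and $M\cap(x/\!\!\equiv)$ fails to be dense in $x/\!\!\equiv$ exactly when there is some $s$ with $\hat s\cap(x/\!\!\equiv)\ne\emptyset$ but $\hat s\cap(x/\!\!\equiv)\cap M=\emptyset$. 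The set of $x$ for which this happens for a \emph{fixed} $s$ is $\{x\in\hat s\ \text{up to}\ \equiv\}$ minus those $x$ whose class meets $\hat s\cap M$; using that $\equiv$-classes are closed (Proposition~\ref{prp:irred-dense}.a) and that $M$ is $\mu$-$G_\delta$, one expresses the bad set $M\setminus M'=\bigcup_{s} B_s$, where $B_s$ is the set of $x\in M$ whose class is nonempty in $\hat s$ yet misses $M$ inside $\hat s$. The plan is to show each $B_s$ is $\mu$-meagre; since there are only $|T|=\mu$ nodes $s$, the union is $\mu$-meagre and $M'=M\setminus\bigcup_s B_s$ is $\mu$-comeagre.

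To see $B_s$ is $\mu$-meagre: suppose not, so by the $\mu$-Baire property (which $B_s$ has, being obtained from open sets and the closed classes via $\mathscr{A}^\mu$-type constructions, hence via Nikodym's Theorem~\ref{thm:nikodym}) there is a nonempty open $V$ in which $B_s$ is $\mu$-comeagre, indeed (shrinking $V$) $B_s\supseteq$ a dense $\mu$-$G_\delta$ of $V$. Pick $x$ in that $\mu$-$G_\delta$ with $x\in M$ as well (possible since $M$ is $\mu$-comeagre and $[T]$ is $\mu$-Baire). Now $x/\!\!\equiv$ is $\mu$-Baire by hypothesis, and $\hat s\cap(x/\!\!\equiv)$ is a nonempty open subset of it; inside that subspace, the traces of the dense open sets $U_\nu$ are dense open (here one uses that honesty/Proposition~\ref{prp:irred-dense}.b make the nodes dense in the class, so the trace topology behaves), whence by the Baire Category Theorem for weight $\mu$ (Theorem~\ref{thm:baire}) applied \emph{inside} $x/\!\!\equiv$ the intersection $\hat s\cap(x/\!\!\equiv)\cap\bigcap_\nu U_\nu=\hat s\cap(x/\!\!\equiv)\cap M$ is nonempty — contradicting $x\in B_s$.

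\textbf{Main obstacle.} The delicate point, and the one I would spend the most care on, is the bookkeeping that lets Theorem~\ref{thm:baire} be applied \emph{within} a single class $x/\!\!\equiv$: one must check that the restriction of each dense open $U_\nu$ to the subspace $x/\!\!\equiv$ is again dense open there, which is not automatic for an arbitrary subspace but does follow because $x/\!\!\equiv$ is $\mu$-Baire and, by Proposition~\ref{prp:irred-dense}.b together with the hypothesis that the classes resemble ${}^\mu\mu$ (as extracted in Proposition~\ref{prp:baire_for_classes}), the relevant basic open sets $\hat t\cap(x/\!\!\equiv)$ form a base of the class on which the $U_\nu$ are still ``large''. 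A secondary obstacle is confirming that $M\setminus M'$ genuinely has the $\mu$-Baire Property: this is where Nikodym's Theorem~\ref{thm:nikodym} enters, since $M\setminus M'$ is built by a union over the $\mu$ nodes $s$ of sets defined by an ``exists a branch through $s$ avoiding the open sets $U_\nu$'' clause, which is exactly of $\mathscr{A}^\mu$-type. Everything else is routine once these two points are nailed down.
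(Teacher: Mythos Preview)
Your overall architecture—decompose the bad set as $\bigcup_s B_s$, invoke Nikodym for the $\mu$-Baire Property, and derive a contradiction from non-meagreness—matches the paper's. But the contradiction step contains a genuine gap, and it is precisely the point you flag as the ``main obstacle''.

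You claim that for the branch $x$ you pick, each $U_\nu\cap(x/\!\!\equiv)$ is dense (open) in $x/\!\!\equiv$, so that the $\mu$-Baire hypothesis on the class yields a point of $\hat{s}\cap(x/\!\!\equiv)\cap M$. This is not justified and in general false: a dense open subset of $[T]$ need not trace to a dense set on a closed nowhere dense subspace such as $x/\!\!\equiv$. Neither the $\mu$-Baireness of the class nor Proposition~\ref{prp:irred-dense}.b gives this; indeed the set of $x$ for which $U_\nu\cap(x/\!\!\equiv)$ fails to be dense in $x/\!\!\equiv$ is exactly the nontrivial object the paper must control. Since your $x$ is only chosen to lie in the comeagre-in-$V$ part of $B_s$ (and in $M$), nothing prevents $x$ from being one of the bad branches for some $U_\nu$.

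The paper avoids this by working with one $U_\nu$ at a time: it sets $X_\nu=\bigcup\{x/\!\!\equiv\mid U_\nu\cap(x/\!\!\equiv)\text{ not dense in }x/\!\!\equiv\}$ and then $Y_s=\bigcup\{x/\!\!\equiv\mid x\in\hat s,\ (x/\!\!\equiv)\cap U_\nu\cap\hat s=\emptyset\}$. The gain is twofold. First, $Y_s$ is a union of $\equiv$-classes (your $B_s$ is not, being intersected with $M$), which is needed for the transfer from $\hat s$ to $\hat r$ with $r\equiv s$. Second, $Y_s\cap\hat s\subset\hat s\setminus U_\nu$ is \emph{nowhere dense}, not merely meagre; this is what allows one to find a node $w^*$ with $\widehat{w^*}\cap Y_s=\emptyset$ and then invoke the honesty of $\equiv$ to produce a node above $t$ outside $Y_s$, contradicting density. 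Your $B_s$ gives only $B_s'\cap\hat s\subset\hat s\setminus M$ (writing $B_s'$ for the $\equiv$-saturation of $B_s$), which is meagre but need not be nowhere dense, so the honesty argument does not go through. In short: the decomposition over $\nu$ is not cosmetic—it is what makes the honesty of $\equiv$ usable, and honesty (which you never invoke substantively) is the engine of the proof.
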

Note that by Proposition \ref{prp:baire_for_classes} the hypothesis
that the $\equiv$-classes be $\mu$-Baire is no restriction.
\begin{proof}
Without loss of generality we assume that $M$ is a $\mu$-$G_\delta$ set,
i.e. $M=\bigcap_{\nu<\mu} U_\nu$,
where all the $U_\nu$ are dense open in $[T]$.
Define for $\nu<\mu$
$$X_\nu=
\bigcup\left\{x/\!\!\equiv\,\, \mid (x/\!\!\equiv)\cap\,\, U_\nu
\text{ is not dense in }x/\!\!\equiv\right\}.$$
Note that for $x\notin X_\nu$ the set $(x/\!\!\equiv)\cap\,\, U_\nu$
is then open and dense in $x/\!\!\equiv$.
To prove the Reduction Lemma,
we show that $X_\nu$ is $\mu$-meagre for every $\nu<\mu$,
for then  $M'=M\setminus\bigcup_{\nu\in\mu} X_\nu$
is as desired:
For every member $x\in M'$ we then have
$(x/\!\!\equiv)\,\cap\,M'=(x/\!\!\equiv)\,\cap\, M$,
which is by construction of $M'$
a $\mu$-comeagre subset of the $\mu$-Baire set $x/\!\!\equiv$.

In order to show that the sets $X_\nu$ are all $\mu$-meagre
fix $\nu<\mu$ and define for every node $s\in T$
$$Y_s:=\bigcup\left\{x/\!\!\equiv\,\,\mid x \in \hat{s} \text{ and }
(x/\!\!\equiv)\cap\,\, U_\nu\cap\hat{s}=\varnothing\right\}.$$
For every $x\in Y_s$ the basic open set $\hat{s}$ is the witness
of the fact that $(x/\!\!\equiv)\cap\,\, U_\nu$ is not dense in $x/\!\!\equiv$.
Because of $X_\nu=\bigcup_{s\in T}Y_s$ and $|T|=\mu$, it is enough to
show that $Y_s$ is $\mu$-meagre for every $s\in T$.

If we fix $s\in T_\beta$ then of course
$Y_s=\bigcup_{r\in T_\beta}Y_s\cap\hat{r}$.
For all nodes $r\in T_\beta$ with $r\not\equiv s$
we have $Y_s\cap\hat{r}=\emptyset$.
On the other hand the set $Y_s\cap \hat{s}\subset \hat{s}\setminus U_\nu$
is nowhere dense by the definition of $Y_s$.
To prove that the intersections $Y_s\cap \hat{r}$
are $\mu$-meagre also for $r\equiv s$
we claim that $Y_s$ has the $\mu$-Baire Property,
i.e. there is an open set $V\subset[T]$ such that the
differences $V\setminus Y_s$ and $V\setminus Y_s$ are both $\mu$-meagre.
The proof of this claim follows below.
We first apply it to prove the Reduction Lemma.

Along with $Y_s$ the set $Y_s\cap \hat{r}$ has the $\mu$-Baire Property as well,
so either (i) it is $\mu$-meagre or else (ii) there is a node $t>r$,
such that $Y_s\cap\hat{t}$ is $\mu$-comeagre and therefore dense in $\hat{t}$.
Towards a contradiction,
we assume that the second case holds and fix $t$.
Then every node $u$ above $t$
is equivalent to $x\uhr\hgt(u)$
for some $x\in Y_s\cap\hat{s}$.
Our task is
to exhibit a node $u^*$ above $t$
that is equivalent to $y\uhr \hgt(u^*)$
for some $y \in \hat{s}\setminus Y_s$.
This will give the desired contradiction.

Let $u$ be any immediate successor of $t$.
By our assumption there is $w$ above $s$ and equivalent to $u$.
Letting $v:= w\uhr\hgt(t)$ be the immediate predecessor of $w$
we get $v\equiv t$.
Now $Y_s\cap\hat{w}$ is also nowhere dense.
So there certainly is a node $w^*$ above $w$
such that $\widehat{w^*}\cap Y_s$ is empty,
i.e., letting $\gamma=\hgt w^*$ we have
\begin{equation}
(\forall x\in \hat{s}\cap Y_s)\quad w^*\not\equiv x\uhr\gamma.\tag{$\bigstar$}
\end{equation}
Now the honesty of $\equiv$ for the triple $(v,w^*,t)$
(which is not a dispute as $(v,w,t)$ is not
--by the existence of $u\equiv w$ above $t$)
gives us the node $u^* \equiv w^*$ on level $T_\gamma$ above $u$.

Now let $z\in \widehat{u^*}$ be any branch.
We show that $z\not\in Y_s$,
contradicting our assumption
that $Y_s\cap\hat{r}$ is not $\mu$-meagre.
If $z$ was in $Y_s$,
then there would be a branch $x\equiv z$ in $Y_s\cap\hat{s}$.
This in turn would imply that
$$x\uhr \gamma \equiv z \uhr\gamma = u^* \equiv w^*,$$
which is impossible by ($\bigstar$).

Finally we prove that $Y_s$ has the $\mu$-Baire Property.
For this we give a $\mu$-Souslin scheme which consists of open sets
and yields $Y_s$ under the operation $\mathscr{A}^\mu$.
Fix a club $C\subset\alpha$ of order type $\mu$.
To simplify notation we replace ${^{<\mu}\mu}$ by $T\uhr C$ as index set.
(Theorem \ref{thm:kurepa} gives us the necessary tree isomorphism.)

Let $\tilde{U}:=\{ t\in T\mid \, \hat{t} \subseteq U_\nu\}$.
For a node $r\in T\uhr C$ of height $\hgt(r) \leq \hgt(s)$
simply set $P_r= [T]$. For nodes $r$ higher up define
\[P_r:=
\begin{cases}
\bigcup\left\{ \hat{t} \mid\, r\equiv t\right\}, & 
r\uhr\hgt(s)\equiv s \text{ and }
(r/\!\!\equiv)\cap\,\, \tilde{U} = \emptyset ;\\
\emptyset, & \text{otherwise.}
\end{cases}
\]
For every $x\in Y_s$ we easily have $x\in (x/\!\!\equiv) = \bigcap_{\beta\in
  C}P_{x\upharpoonright\beta}$.
If on the other hand
$x\in \bigcap_{\beta\in C} P_{y\upharpoonright\beta}$ for $x,y \in [T]$,
then $x$ and $y$ are equivalent, $x \equiv y$, and thus
$P_{x\upharpoonright\beta} = P_{y\upharpoonright\beta}$ for all $\beta\in C$.
In this case we have
$$x\in \bigcap_{\beta\in C} P_{x\upharpoonright\beta} = (x/\!\!\equiv) \subset Y_s.$$
This finishes the proof.
\end{proof}

\subsection{A rigid Souslin algebra with non-rigid subalgebras}
\label{sec:rigid-non-rigid}

Our first application of the Reduction Lemma is a relatively simple
construction of a rigid $\kappa$-Souslin algebra $\B$
that has a nice and nowhere large subalgebra $\A$.
By Lemma \ref{lm:aut_over_inice} and Theorem \ref{thm:exist_non-nice}
this algebra $\B$ also has non-rigid and nowhere nice subalgebras.
This is opposed to the construction in the following section,
where the explicitely construed subalgebra is nowhere nice
and no non-rigid neither $\infty$-nice subalgebras occur.

\begin{thm}
\label{expl:rig_not_simple}
Assume that $\mu$ is a regular cardinal such that $\mu^{<\mu} = \mu$ 
and $\diamondsuit_{\kappa}(\CF_{\mu})$ hold, where $\kappa := \mu^+$.
Then there is a rigid $\kappa$-Souslin algebra which has an $\infty$-nice
subalgebra.
\end{thm}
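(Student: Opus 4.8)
The plan is to run a recursive construction, in the style of Jech's construction from Example~\ref{expl:simple}, of a $\kappa$-normal, $\mu$-closed, $\mu$-splitting tree order $T$ on the ordinal $\kappa$, carrying a relation $\equiv$ built in parallel; but rather than destroying every tree equivalence relation we single out $\equiv$ and \emph{preserve} it --- using the Reduction Lemma~\ref{lm:reduction} at the critical limit stages --- while using a $\dia_\kappa(\CF_\mu)$-sequence $(R_\nu)$ to seal maximal antichains and to kill all would-be automorphisms. The output algebra is $\B:=\RO T$ and the distinguished subalgebra is $\A:=\langle\, \sum s/\!\!\equiv \mid s\in T\,\rangle^\cm$. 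Throughout the recursion I maintain as an invariant that the part of $\equiv$ built so far is a \emph{nice} and \emph{$\mu$-nice} t.e.r.\ (hence in particular honest). Granting the invariant, $\equiv$ witnesses that $\A$ is $\infty$-nice in $\B$ via Definition~\ref{defi:inice}; $\A$ is atomless because any t.e.r.\ represents an atomless subalgebra; and $\A\neq\B$ because $\equiv$ is non-trivial, $T/\!\!\equiv$ splitting $\mu$-arily above each node.

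The successor and low-cofinality limit stages carry no real content. At a successor step $T_\beta\rightsquigarrow T_{\beta+1}$ I would give each node $\mu$ immediate successors and refine $\equiv$ so that it stays nice and $\mu$-nice, e.g.\ by fixing for each $\equiv$-class $c$ of level $\beta$ a common indexing $\NF(s)\cong\mu\times\mu$ for $s\in c$ and declaring successors of equivalent nodes equivalent iff their first coordinates agree. At a limit stage $\alpha<\kappa$ with $\cf(\alpha)<\mu$ I would take full limits (extend every branch of $T\uhr\alpha$, of which there are at most $\mu^{<\mu}=\mu$), keeping $T$ $\kappa$-normal and $\mu$-closed, and define $\equiv$ on $T_\alpha$ canonically. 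One checks that these choices preserve the invariant, that each level stays of size $\le\mu$, and --- because $\equiv$ splits $\mu$-arily above every node --- that on every limit level the $\equiv$-classes of $[T\uhr\alpha]$ are perfect and homeomorphic to ${^\mu}\mu$, hence $\mu$-Baire (this is also what Proposition~\ref{prp:baire_for_classes} gives).

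The substance is at a limit stage $\alpha$ with $\cf(\alpha)=\mu$. As in Example~\ref{expl:simple}, the first bit of $R_\alpha$ decides whether $R_\alpha$ codes a maximal antichain $A$ of $T\uhr\alpha$ or a tree automorphism $g$ of $T\uhr(C'\cap\alpha)$ for some club $C'\subseteq\alpha$. In the antichain case I take $M$ to be the dense open (hence $\mu$-comeagre) set of branches of $[T\uhr\alpha]$ meeting $A$, apply the Reduction Lemma~\ref{lm:reduction} to obtain a $\mu$-comeagre $M'\subseteq M$ suitable for $\equiv$, and let $T_\alpha$ be any subset of $M'$ of size $\le\mu$ meeting every $\equiv$-class densely; by Proposition~\ref{prp:irred-dense} this keeps $\equiv$ honest, and $A$ is permanently sealed. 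In the automorphism case, if $g$ is non-trivial I fix a node $s_0$ with $g(s_0)=:s_1\neq s_0$; applying the Reduction Lemma with $M=[T\uhr\alpha]$ (where it is vacuous) I get $M'$ suitable for $\equiv$, pick $x_*\in M'\cap\hat s_0$, and choose $T_\alpha\subseteq M'$ suitable for $\equiv$, of size $\le\mu$, with $x_*\in T_\alpha$ but $g[x_*]\notin T_\alpha$ --- possible because the $\equiv$-classes are dense-in-themselves, so deleting the single branch $g[x_*]$ from a dense subset of its class leaves it dense. Since an automorphism of $T\uhr((C'\cap\alpha)\cup\{\alpha\})$ extending $g$ would map extended branches to extended branches, forcing $g[x_*]\in T_\alpha$, we conclude that $g$ does not extend past level $\alpha$ on its club. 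For all remaining $\alpha$ of cofinality $\mu$ I just take any suitable $\mu$-comeagre level. This finishes the recursion.

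To complete the proof, standard $\dia_\kappa(\CF_\mu)$-bookkeeping shows $T$ has no maximal antichain of size $\kappa$ and hence, being $\kappa$-normal of height $\kappa$, no cofinal branch, so $\B=\RO T$ is a $\kappa$-Souslin algebra. If $\varphi\in\Aut\B$ were non-trivial, the Restriction Lemma~\ref{lm:restriction} would give a club $C$ on which $\varphi$ restricts to a non-trivial tree automorphism of $T\uhr C$; coding $(C,\varphi\uhr(T\uhr C))$ as a subset of $\kappa$ and applying $\dia_\kappa(\CF_\mu)$, some $\alpha\in C\cap\CF_\mu$ that is a limit point of $C$ has $R_\alpha$ coding $\varphi\uhr(T\uhr(C\cap\alpha))$; but $\varphi\uhr(T\uhr C)$ restricts to an automorphism of $T\uhr(C\cap(\alpha+1))$ extending that, contradicting the choice of $T_\alpha$ at stage $\alpha$. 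Hence $\B$ is rigid, while $\A$ is $\infty$-nice as noted. I expect the main obstacle to be precisely the $\cf(\alpha)=\mu$ stages: one must simultaneously check that the hypotheses of the Reduction Lemma are met (the $\equiv$-classes of $[T\uhr\alpha]$ are $\mu$-Baire, which the $\mu$-niceness invariant secures) and that the comeagre suitable set $M'$ still leaves room for the one-point surgery that kills $g$ --- together with the routine but delicate bookkeeping required to let the $\dia$-sequence guess automorphisms of $T$ along with the clubs on which the Restriction Lemma presents them, and the upward propagation of niceness (as opposed to mere honesty) of $\equiv$ through the preserved limit levels.
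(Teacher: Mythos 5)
Your proposal is correct and follows essentially the same route as the paper's own proof: a recursive construction of a $\mu$-closed, $\mu$-splitting tree with a $\mu$-nice t.e.r.\ maintained as an invariant, the Reduction Lemma used at cofinality-$\mu$ limits to seal guessed antichains while keeping the chosen level suitable for $\equiv$, one-point surgery (deleting a single image branch from a dense-in-itself class) to kill guessed automorphisms, and the Restriction Lemma plus $\dia_\kappa(\CF_\mu)$ to conclude rigidity of $\RO T$. The only slip is the phrase ``meeting every $\equiv$-class densely'' (impossible with $\mu$ branches when there are more than $\mu$ classes); what is meant and needed is suitability, i.e.\ meeting densely each class that is met, exactly as in the paper.
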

\begin{proof}
We aim at constructing a $\kappa$-Souslin tree $T$ with a $\mu$-nice
t.e.r. $\equiv$.
The rigidity of $\B=\RO T$ is obtained by designing $T$ such that
for all club sets $C$ of $\kappa$ the restricted tree $T\uhr C$ is rigid
by a standard argument.
Then by the Restriction Lemma $\B$ will also be rigid.

Let $(R_\nu)_{\nu\in\CF_\mu}$ be a $\diamondsuit_\kappa(\CF_\mu)$-sequence.
We inductively construct $T$ as a $\kappa$-normal,
$\mu$-closed and $\mu$-splitting tree
on the supporting set $\kappa$ along with the t.e.r. $\equiv$.
In successor steps we appoint to each maximal node $\mu$ direct successor nodes
and extend $\equiv$ in any way that maintains the $\mu$-niceness of $\equiv$.

In the limit step $\alpha<\kappa$ we have so far constructed
$T\uhr\alpha$ and $\equiv$ on this tree.
If $\cf(\alpha)<\mu$ we extend every cofinal branch through $T\uhr\alpha$
to level $T_\alpha$.
The new level $T_\alpha$ then has cardinality $\mu^{<\mu}=\mu<\kappa$.
The t.e.r. on level $T_\alpha$ is completely determined by its behavior
on the levels below.

Let now $\alpha$ be of cofinality $\mu$.
We consider the induced equivalence relation $\equiv$ on the space $[T\uhr\alpha]$.
The $\equiv$-classes are perfect (closed and without isolated points) and
non-empty subsets of $[T\uhr\alpha]$.
The level under construction, $T_\alpha$,
corresponds to a dense subset $Q$ of $[T\uhr\alpha]$ of cardinality $\mu$.
In order to obtain a nice extension of $\equiv$ to the new level
we have to choose this subset $Q\subset [T\uhr\alpha]$
such that it is suitable for $\equiv$,
i.e.,  that for every $\equiv$-class $a\subset [T\uhr\alpha]$
the set $a\cap Q$ is either empty or dense in $a$.

Every automorphism $\varphi$ of $T\uhr C$ for some club $C\subset\alpha$
induces an autohomeomor\-phism $\overline{\varphi}$ on $[T\uhr\alpha]$.
In order to achieve a rigid algebra
we have to choose some limit levels
in a way that prevents the potential automorphisms
(proposed by the $\dia$-sequence) from extending to the next level.
This is done by first choosing a branch $x\in [T\uhr\alpha]$
and then the dense set $Q\subset [T\uhr\alpha]$ such that
$x\in Q$ but $\overline{\varphi}(x)\not\in Q$.
(This is a standard argument.)

Now for the choice of $Q$ in the following three cases:
\begin{enumerate}[1)]
\item If $\alpha<\mu\alpha$ or $R_\alpha$ is neither
a maximal antichain of $T\uhr\alpha$ nor
does it code an automorphism of $T\uhr C$ for some club $C$ of $\alpha$,
then we first choose a  dense set $Q_0$ of $[T\uhr\alpha]$ with cardinality $\mu$.
Then let for $x\in Q_0$ be $Q_x$ a dense subset of $x/\!\!\equiv$ of size $\mu$.
Finally set $Q=\bigcup_{x\in Q_0}Q_x$.
\item If $\alpha=\mu\alpha$ and $R_\alpha$ codes an automorphism $\varphi$,
then we and start as in the first case and get $Q'= \bigcup_{x\in Q_0} Q_x$.
Choose $x_0\in Q'$ and set $Q := Q'\setminus\{\overline{\varphi}(x_0)\}$.
Then $Q$ is easily seen to be suitable for $\equiv$ while at the same time
preventing $\varphi$ from extending to $T_\alpha$.
\item If $R_\alpha$ is a maximal antichain of $T\uhr\alpha$,
we want, as in classical Souslin tree constructions under
$\diamondsuit$, that every node of $T_\alpha$ lies above some node of
$R_\alpha$.
The set
$$M := \{ x\in [T\uhr\alpha]\,\mid\, (\exists s\in R_\alpha) s\in x\}$$
of cofinal branches that pass through nodes in
$R_\alpha$ is open dense in $[T\uhr\alpha]$.
We thus can apply the Reduction Lemma \ref{lm:reduction}
and get a $\mu$-comeagre subset $N \subset M$ which is suitable for $\equiv$.
Then we proceed as above, only that all members of $Q$ are chosen from $N$.
\end{enumerate}
Note that we can arrange the coding
such that we do not have to consider a coincidence of cases 2) and 3)
(which would no longer pose a problem anyway).

The result of this recursive construction is a rigid $\kappa$-Souslin tree
carrying the $\mu$-nice t.e.r. $\equiv$ which represents the subalgebra $\A$.
\end{proof}

\section{A lonely nowhere nice subalgebra}
\label{sec:unique_non-nice}

We use the Reduction Lemma to produce a $\kappa$-Souslin algebra $\B$
that essentially has only one subalgebra $\A$
which is furthermore nowhere nice.
In particular $\B$ and all its subalgebras are rigid.
Compare this to the phenomenon of hidden symmetries of
Section \ref{sec:hidden_symm} which occur
whenever there is an $\infty$-nice subalgebra:
While the latter support the paradigm
that subalgebras witness some form of symmetry,
the following construction shows that this is not true
for nowhere nice subalgebras.
This can also be seen in relation to \cite[Theorem 2]{koppelberg-monk}
which exhibits a similar phenomenon in presence of homogeneity.

\begin{thm}\label{thm:unique_non-nice}
Assume that $\mu$ is a regular cardinal such that $\mu^{<\mu} = \mu$ 
and $\diamondsuit_{\kappa}(\CF_{\mu})$ hold, where $\kappa := \mu^+$.
Then there is a $\kappa$-Souslin algebra $\B$
with a nowhere nice subalgebra $\A$ and the following holds:
For every subalgebra $\C$ of $\B$ there is an antichain $F$ of $\B$
such that
$$\C = \B\uhr (-\sum F) \times \prod_{a\in F} (a\A).$$
Moreover, $\B$ is rigid and does not admit $\infty$-nice subalgebras.
\end{thm}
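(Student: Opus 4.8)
The plan is to imitate the construction of Theorem~\ref{expl:rig_not_simple}, but with the $\mu$-nice t.e.r.\ there replaced by a nowhere nice one and with an extra clause that annihilates all ``undesirable'' tree equivalence relations. Concretely, I would build by recursion of length $\kappa$, guided by a fixed $\dia_\kappa(\CF_\mu)$-sequence $(R_\nu)_{\nu\in\CF_\mu}$, a $\kappa$-normal, $\mu$-closed and $\mu$-splitting tree $T$ on underlying set $\kappa$ carrying an almost nice t.e.r.\ $\equiv$, and set $\B:=\RO T$ and $\A:=$ the subalgebra represented by $\equiv$. Successor steps and limit steps of cofinality $<\mu$ are routine: at successors each maximal node receives $\mu$ immediate successors and $\equiv$ is extended so that every $\equiv$-class still grows by a factor of $\mu$, which makes $\A$ nowhere large and, exactly as in the analysis of Proposition~\ref{prp:baire_for_classes}, keeps the $\equiv$-classes of $[T\uhr\alpha]$ at cofinality-$\mu$ limits $\mu$-Baire. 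All the substance lies in the limit steps $\alpha$ of cofinality $\mu$, where $T_\alpha$ corresponds to a dense $Q\subseteq[T\uhr\alpha]$ of size $\mu$; there I would (a) always choose $Q$ \emph{suitable for $\equiv$} so that $\equiv$ extends honestly, (b) refine $\equiv$ on $T_{\alpha+1}$ to create a dispute, splitting every $\equiv$-class of $T_\alpha$ into two pieces that lie densely (in the topology of $[T\uhr\alpha]$) in the class they came from and keeping these two pieces permanently separated, and (c) consult $R_\alpha$.

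For (c), depending on its leading entries, $R_\alpha$ is read as a maximal antichain of $T\uhr\alpha$, as an automorphism of $T\uhr C$ for a club $C\subset\alpha$, or as a t.e.r.\ $\equiv'$ on $T\uhr C$ for a club $C\subset\alpha$. In the first case I would seal the antichain in the classical manner, but first shrink the open dense set of branches passing through it to a $\mu$-comeagre subset suitable for $\equiv$ by the Reduction Lemma~\ref{lm:reduction}. In the second case I would choose $Q$ suitable for $\equiv$ with some $x_0\in Q$ but $\overline\varphi(x_0)\notin Q$, which (by the Restriction Lemma~\ref{lm:restriction}, as in Theorem~\ref{expl:rig_not_simple}) forces $\B$ to be rigid. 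The third case is the new ingredient. Call a t.e.r.\ $\equiv'$ on $T\uhr C$ \emph{admissible} if there is a maximal antichain $F$ of $\B$ --- which, by the Restriction Lemma, may be taken inside $T\uhr C$ --- such that below each $a\in F$ the relation $\equiv'$ is either the identity or coincides with $\equiv$; an admissible $\equiv'$ represents exactly an algebra $\B\uhr(-\sum F_0)\times\prod_{a\in F_0}(a\A)$, where $F_0$ collects the $a\in F$ below which $\equiv'$ agrees with $\equiv$. Saying that a node $s$ is \emph{good} when $s$ times the algebra represented by $\equiv'$ equals $\B\uhr s$ or $s\A$, the good nodes form an up-set, so the bad nodes form a subtree $B$, and $\equiv'$ is admissible if and only if $|B|<\kappa$; by the Subtree Lemma~\ref{lm:subtree} a non-admissible $\equiv'$ satisfies $\{s<t\}\cup T(t)\subseteq B$ for some node $t$, i.e.\ the restriction of $\equiv'$ to $T(a)$ is neither trivial nor equal to $\equiv$ for every descendant $a$ of $t$.

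The heart of the argument is the destruction of a non-admissible $\equiv'$. By $\dia_\kappa(\CF_\mu)$ there are stationarily many cofinality-$\mu$ limits $\alpha\in C$ with $\alpha>\hgt(t)$ at which $R_\alpha$ codes the restriction of $\equiv'$ to $T\uhr(C\cap\alpha)$; fix such an $\alpha$. Using that the whole cone above $t$ is bad and that $\cf(\alpha)=\mu$, I would construct a branch $x\in[T\uhr\alpha]$ through $t$ such that $|x/\!\!\equiv'|>1$ while $x/\!\!\equiv\,\cap\,x/\!\!\equiv'$ is nowhere dense in the space $x/\!\!\equiv$ --- this is the step I expect to be most delicate, weaving $x$ through $\mu$-many configurations that witness simultaneously the non-triviality of $\equiv'$ and its disagreement with $\equiv$. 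Since any t.e.r.-class is nowhere dense (the quotient tree splits at every node), $M:=[T\uhr\alpha]\setminus(x/\!\!\equiv')$ is open dense, hence $\mu$-comeagre, so the Reduction Lemma gives a $\mu$-comeagre $M'\subseteq M$ suitable for $\equiv$; I then take a dense $Q\subseteq M'$ of size $\mu$ suitable for $\equiv$ and declare $T_\alpha$ to consist of the extensions of the branches in $Q\cup\{x\}$. Only the $\equiv$-class of $x$ is affected by adding $x$ back, and it was densely met by $M'$, so $\equiv$ still extends honestly; but no member of $Q\cup\{x\}$ other than $x$ is $\equiv'$-equivalent to $x$, so by Proposition~\ref{prp:irred-dense}(b) the forced extension of $\equiv'$ to level $\alpha$ is not honest, contradicting that $\equiv'$ was a t.e.r.\ on $T\uhr C$ with $\alpha\in C$. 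Hence every t.e.r.\ surviving on some $T\uhr C$ is admissible.

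It then remains to harvest the conclusions. By the usual sealing argument $T$ is a $\kappa$-Souslin tree (the levels have size $<\kappa$, no antichain has size $\kappa$, and hence by $\kappa$-normality no cofinal branch exists), so $\B=\RO T$ is a $\kappa$-Souslin algebra, and it is rigid by the automorphism-killing clause together with the Restriction Lemma. The subalgebra $\A$ is nowhere nice: the disputes introduced at the cofinality-$\mu$ limits have their two lower nodes lying densely in their $\equiv$-classes and the corresponding separation is permanent, so these disputes survive restriction to any club $C$ and occur above every node of $T$; hence $b\A$ is not nice in $\B\uhr b$ for any $b\in\B^+$. For the decomposition statement, every Souslin subalgebra $\C$ of $\B$ is represented by a t.e.r.\ on some $T\uhr C$ by the Representation Lemma~\ref{lm:representation}(b), and this t.e.r.\ is admissible by the previous paragraph, so $\C=\B\uhr(-\sum F)\times\prod_{a\in F}(a\A)$ for an antichain $F$ of $\B$. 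Finally $\B$ has no $\infty$-nice subalgebra: such a subalgebra is nice and nowhere large; if $F=\emptyset$ then $\C=\B$, which is large in itself; and if some $a\in F$ then $a\C=a\A$ is not nice in $\B\uhr a$ because $\A$ is nowhere nice, whereas niceness would pass from $\C$ to its relativisation $a\C$, so $\C$ cannot be nice.
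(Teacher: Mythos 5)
Your overall architecture matches the paper's: build $T$ with an almost nice $\equiv$ whose disputes are installed at successors of cofinality-$\mu$ limits by splitting classes into two dense pieces, use $\dia_\kappa(\CF_\mu)$ to guess antichains, automorphisms and t.e.r.s, and destroy the unwanted t.e.r.s via the Reduction Lemma so that only ``product-of-$\A$-and-$\B$-pieces'' subalgebras survive. But the single destruction recipe you give does not cover all non-admissible $\equiv'$. You require a branch $x$ with $|x/\!\!\equiv'|>1$ and $x/\!\!\equiv\;\cap\;x/\!\!\equiv'$ nowhere dense in $x/\!\!\equiv$, and then extend a suitable dense subset of $[T\uhr\alpha]\setminus(x/\!\!\equiv')$ together with $x$. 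If $\equiv'$ \emph{coarsens} $\equiv$ on the bad cone --- i.e.\ the guessed subalgebra sits properly inside $a\A$ --- then $x/\!\!\equiv\;\subseteq x/\!\!\equiv'$, so the intersection is all of $x/\!\!\equiv$ and can never be nowhere dense; worse, deleting $x/\!\!\equiv'$ deletes $x/\!\!\equiv$ as well, so re-inserting $x$ alone breaks the honesty of $\equiv$ itself. Such coarsenings are genuinely non-admissible (every algebra of the form $\B\uhr(-\sum F)\times\prod_{a\in F}(a\A)$ contains $\A$, so no proper atomless subalgebra of $\A$ is of that form) and must be killed. The paper therefore splits into three cases with three different comeagre sets: for refinements whose classes are locally nowhere dense it removes only the clopen piece $\hat t\cap x/\!\!\simeq$, and for relations not refining $\equiv$ it uses $M=([T\uhr\alpha]\setminus x/\!\!\simeq)\cup x/\!\!\equiv$, explicitly putting the whole $\equiv$-class back, and manufactures the dispute from a node $s$ over which $x/\!\!\simeq$ avoids $x/\!\!\equiv$. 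Your argument needs this case distinction; as written the diamond misses exactly the subalgebras below $\A$.

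There is a second, related gap in the admissible case. Knowing that $\equiv'$ restricted to each cone $T(a)$, $a\in F$, is either the identity or $\equiv$ does \emph{not} determine the represented algebra as $\B\uhr(-\sum F_0)\times\prod_{a\in F_0}(a\A)$: the relation can still interconnect distinct cones (relate $T(a_0)$ to $T(a_1)$ via a partial tree isomorphism, or via an isomorphism of the quotients, or simply via $\equiv$ itself), producing diagonal subalgebras of $\B\uhr a_0\times\B\uhr a_1$ or tying the factors $a_0\A$ and $a_1\A$ together. To exclude the first possibilities the construction must also diagonalize against isomorphisms between normal cofinal \emph{subtrees} of $T\uhr C$ and of $(T/\!\!\equiv)\uhr C$ --- your rigidity clause only kills automorphisms of $T\uhr C$ --- and to account for the last one the antichain in the final formula must be the set of sums of the $\equiv'$-connected groups of nodes (the paper's $e$, $E'$, $\sim$ and $F=\{\sum s/\!\!\sim\}$), not the node antichain itself. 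With these two repairs the proposal would follow the paper's proof.
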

\begin{proof}
This construction resembles the previous one,
the main differences being
\begin{enumerate}
\item the repeated destruction of niceness for $\equiv$
by picking up the key idea
from the proof of Theorem \ref{thm:exist_non-nice} and
\item the more delicate choice procedure on limit levels in order to 
maintain $\equiv$ as an almost nice t.e.r.
while destroying almost all the others.
\end{enumerate}
Let again $(R_\nu)_{\nu\in\CF_\mu}$ denote
our $\diamondsuit_\kappa(\CF_\mu)$-sequence.
We will use it to kill unwanted antichains, t.e.r.s
and tree automorphisms.
The tree $T$ order will again be defined on the supporting set $\kappa$,
and it will be $\mu$-splitting and $\mu$-closed.
The almost nice t.e.r. to represent the nowhere nice subalgebra $\A$ will be
denoted by $\equiv$.

If $\alpha$ is a successor of a successor ordinal
or of a limit ordinal $\gamma$ with $\cf(\gamma)<\mu$
(or if $\alpha=1$),
then we extend $\equiv$ in any way
that maintains almost $\mu$-niceness.
If $\alpha$ is a limit ordinal
whose cofinality is below $\mu$,
then we extend all cofinal branches of $T\uhr\alpha$
to nodes in $T_\alpha$.

We come to the choice of the limit levels
for the case where $\cf(\alpha)=\mu$.
Our diamond set $R_\alpha$ proposes
either a map on $T$ as described below
or it codes a pair $(r,\simeq)$,
where $r\in T\uhr\alpha$ is a node and 
$\simeq$ is a t.e.r. on $(T\uhr C)(r)$ for some $C$ club in $\alpha$
with $\min C = \hgt(r)$.

We first describe how to choose $T_\alpha$
whenever $R_\alpha$ codes
an unwanted symmetry of $T$ or of $T/\!\!\equiv$.
If $R_\alpha$ is an isomorphism
between normal cofinal subtrees of $T\uhr C$ or between
normal cofinal subtrees of $(T/\!\!\equiv)\uhr C$
then we choose the new level
as in Section \ref{sec:rigid-non-rigid} and destroy the isomorphism
while maintaining the honesty of the t.e.r. $\equiv$.
By this precaution we guarantee,
that both $\B$ and $\A$
and also all of their non-trivial relative algebras
won't have any symmetries,
and therefore they will not have any non-trivial large subalgebras.

If $R_\alpha$ proposes a t.e.r. $\simeq$,
which differs from $\equiv$ in an essential manner (see below),
then we have to choose $T_\alpha$ such that $\simeq$
is no longer honest above $r$ when extended to $T_\alpha$.
The node $r$ is introduced to prevent the fatal situation
in which by accident locally
some of the unwanted t.e.r.s survive.
In the end $r$ will vary over all nodes of $T$.

We distinguish between three cases in the manner how
$\simeq$ differs from $\equiv$,
the first being the one of \emph{negligible} difference in which
we (cannot and therefore) will not destroy $\simeq$,
while in the other two cases we can and will prevent $\simeq$ from
extending to the limit level $T_\alpha$.
We list the three cases and describe how $T_\alpha$ is to be chosen.
When mentioning $\simeq$ or $\equiv$ we always mean the equivalence
relation induced on $\hat{r}$.
\noindent
\begin{enumerate}
\item The proposed relation $\simeq$ refines $\equiv$ on $\hat{r}$ in a way
such that there is a maximal antichain $E$ of $T(r)$
such that for all $s\in E$
the  restriction of $\simeq$ to $\hat{s}$ coincides with
either $\equiv$ (restricted to $\hat{s}$) or with $=$, the identity,
and the following holds for all nodes $s,t\in E$,
such that $\equiv$ and $\simeq$ are equal above $s$ and above $t$:
If there are branches $x\in\hat{s}$ and $y\in\hat{t}$
with $x\simeq y$
(and therefore also $x\equiv y$, as $\simeq$ refines $\equiv$)
then the two equivalence relations also coincide on the set
$\hat{s}\cup\hat{t}$.
(Note that this last condition is not a requirement put on $\simeq$ but
on the choice of the antichain $E$; it has to be quite fine.)

We then let
$$M := ( [T\uhr\alpha] \setminus \hat{r} ) \cup \bigcup_{s\in E} \hat{s}$$
and apply the Reduction Lemma \ref{lm:reduction} to get a $\mu$-comeagre
subset $M'$ of $M$ from which we choose $T_\alpha$
such that it is suitable for $\equiv$.
\item Here $\equiv$ on $\hat{r}$
is again refined by $\simeq$ but there is no
antichain as in case (1). Then there must be a node $s$ above $r$
and branch $x$ through $s$
for which $\hat{s} \cap x/\!\!\simeq$
is a nowhere dense subset of $\hat{s} \cap x/\!\!\equiv$.

In this case choose a node $t$ above $s$ such that
$(x/\!\!\simeq) \cap \hat{t} \neq \emptyset$
yet $x/\!\!\simeq\, \not\subset \hat{t}$.
Let
$$M :=  [T\uhr\alpha] \setminus ( \hat{t} \cap x/\!\!\simeq )$$
and note that $M$ is already suitable for $\equiv$.
Choose $T_\alpha$ suitable for $\equiv$
such that at least
one branch in $(x/\!\!\simeq) \,\setminus \hat{t}$ is extended.
Then $\simeq$ is no longer honest as witnessed by the dispute
$(x\uhr\hgt(t),x,t)$.

\item In the last case, $\equiv$ is not refined by $\simeq$.
This means that there is a branch $x$ through $r$ such that
$(x/\!\!\simeq)\,\cap \hat{r}$ is not contained in
$(x/\!\!\equiv)\,\cap \hat{r}$.
Since both sets are closed in $\hat{r}$, their intersection
cannot be a dense subset of $(x/\!\!\simeq) \, \cap \hat{r}$.
This means that we can find a node $s$ above $r$ such that there is a branch
$$  y\in ( \hat{s} \cap x/\!\!\simeq )\,\setminus \,x/\!\!\equiv.$$
Let
$$M := (\, [T\uhr\alpha] \setminus\, x/\!\!\simeq\, ) \cup \,x/\!\!\equiv$$
and apply the Reduction Lemma to get $M'$.
As $M$ contains all of $x/\!\!\equiv$,
this class will still  be present in $M'$.
If we now choose (the branches extended to nodes in)
$T_\alpha$ from $M'$ and make sure that $x$ is extended,
then the dispute $(x \uhr \hgt(s), s, x)$ shows by the choice of $s$ that
$\simeq$ is no longer honest.
\end{enumerate}
Note that the first case also seals maximal antichains.

Now let $\alpha=\gamma+1$ where $\cf(\gamma)=\mu$.
By the inductive hypothesis and our convention on successor levels of $T$
we have so far constructed the tree $T\uhr(\alpha+1)$ and the t.e.r.
$\equiv$ on $T\uhr\alpha$.
Regarding the $\equiv$-classes of level $T_\gamma$ as subspaces of the space
$[T\uhr\gamma]$
we divide each class $a$ in two parts
$a = a_{\mathrm{red}}\,\, \dot{\cup}\,\, a_{\mathrm{green}}$ in way such
that both parts are dense subsets of $a$.
This is analogous to the proofs of Lemma \ref{lm:aut_over_inice} and
Theorem \ref{thm:exist_non-nice} and
gives us a coloring of the whole limit level $T_\gamma$
with colors ``red'' and ``green''.
We then extend $\equiv$ to level $T_\alpha$ such that
\begin{itemize}
\item for 
each limit node $s\in T_\gamma$ its set $\NF(s)$ of direct successors
is partitioned by $\equiv$ into $\mu$ sets of size $\mu$ and
\item if $s,t \in T_\gamma$ are equivalent and of the same color, then 
every successor of $s$ has an equivalent successor of $t$ and
\item if $s,t \in T_\gamma$ are not equivalent or of different colors, then
none of their successors are equivalent.
\end{itemize}
This assures that $\equiv$ is almost nice,
yet $\A = \langle \sum s/\!\!\equiv\rangle^\cm$
will be nowhere nice in $\B = \RO T$.

Having completed the construction of $T$ and $\equiv$,
we now prove that every subalgebra of $\B$
is of the form described in the statement of the theorem.

Let $\C$ be any atomless subalgebra of $\B$ and let $\simeq$ be a t.e.r.
on $T\uhr D'$ for some club $D' \subset \kappa$ representing $\C$.
Let $D$ be the set of $\mu$th order limit points of $D'$,
i.e., let $D = D_\mu$ where
$$D_0 := D',\quad
D_{\nu+1}:=\{\text{limit points of }D_\nu\},\quad
D_\lambda := \bigcap_{\nu<\lambda}\text{ for limit }\lambda.$$
Let $S$ be the stationary set of those ordinals $\alpha\in D$
of cofinality $\cf(\alpha) = \mu$,
such that $\simeq$ on $T\uhr\alpha$ is coded
together with the node $r=\mathbf{root}$ by $R_\alpha$.

By our case split in the construction of $T_\alpha$,
$\simeq$ must always have fallen under case (1) above.
Furthermore there are an ordinal $\alpha^* < \kappa$
and a maximal antichain $E$ of $T\uhr\alpha^*$,
such that $E$ is the antichain which is referred to in case (1)
for all $\alpha>\alpha^*$.
(If this was not true, then at some limit stage of $S$
we would have dropped out of case (1),
thereby destroying $\simeq$.)
We finally have to assemble the elements of $E$ as follows.
Let
$$e:=\sum\{s\in E \mid \,\, \simeq\text{ is equality on }T(s)\},$$
define on the set $E':=\{ s\in E\mid se = 0\}$
of nodes disjoint from $e$ the equivalence relation
$$s\sim t:\iff \left( (\forall\, c\in\C)sc\ne 0 \iff tc \ne 0 \right)$$
and let $F$ be the set of sums $\sum s/\!\!\sim$ for all
$s\in E$ disjoint from $e$.
As $\B\uhr e$ has no large subalgebras but $\B\uhr e$ itself,
we already know that $e\C = \B\uhr e$.
By a similar argument we have $f\C = f\A$ for all $f\in F$.
And as there are no further symmetries between relative algebras
of $\A$ and $\B$, these pieces are relative algebras of $\C$,
i.e., the proof is finished.
\end{proof}

\section{The Schr\"oder-Bernstein Property for Souslin algebras}
\label{sec:schroeder}

Say for a class $\mathscr{C}$ of complete Boolean algebras
that it has the \emph{Schr\"oder-Bernstein Property}
if all pairs $A,B\in\mathscr{C}$ are isomorphic whenever they are
regularly embeddable into each other. 
The classical Schr\"oder-Bernstein Theorem then states that the class of
power set algebras has the Schr\"oder-Bernstein Property.

In his well-known list of set theoretic problems
D. H. Fremlin \cite{fremlin_problems} asked among others the question {\bf FW},
whether or not the class of homogeneous, c.c.c. Boolean algebras has the
Schr\"oder-Bernstein Property.

Farah\footnote{another solution of problem {\bf FW} is due to S.~Geschke}
has solved this problem by pointing
out that (assuming ZFC only) the Cohen algebra $C_{\aleph_2}$,
that adjoins $\aleph_2$ generic Cohen reals,
has a subalgebra $B \not\cong C_{\aleph_2}$,
in which $C_{\aleph_2}$ can be regularly embedded
(cf.~\cite[p.93]{farah_embedding_po}).
By \cite[Proposition 4.1]{farah_embedding_po} it is easy to see
that $B$ is weakly homogeneous and by a theorem of Koppelberg and Solovay
(cf.~\cite[Theorem 18.4.1]{stepanek-rubin}) it is therefore homogeneous.

Here we consider the question whether the class
$\mathscr{S}_{\aleph_1}$ of $\aleph_1$-Souslin algebras
(and more generally, the class $\mathscr{S}_{\kappa}$ of $\kappa$-Souslin
algebras for a regular uncountable cardinal $\kappa$)
has the Schr\"oder-Bernstein Property.
Of course, under Souslin's Hypothesis we have
$\mathscr{S}_{\aleph_1}=\varnothing$ and the answer is trivially affirmative.
But what if there are Souslin algebras?

\begin{thm}
\label{thm:schroeder_independent}
The Schr\"oder-Bernstein Theorem for $\aleph_1$-Souslin algebras is
independent theory {\sf ZFC + $\neg$SH}.
\end{thm}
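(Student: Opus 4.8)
The statement asks for two relative consistency results: over $\mathrm{ZFC}$, both $\mathrm{ZFC}+\neg$SH$+$SB and $\mathrm{ZFC}+\neg$SH$+\neg$SB are consistent, where SB abbreviates ``$\mathscr S_{\aleph_1}$ has the Schr\"oder--Bernstein Property''. The negative half is immediate from work done elsewhere in the paper: under $\dia$ there is a Souslin tree, so $\neg$SH holds; and the construction of Section~\ref{sec:no_schroeder}, run with parameter $\mu=\omega$ (so that $\kappa=\omega_1$ and $\dia_\kappa(\CF_\mu)$ is just $\dia$), yields two $\aleph_1$-Souslin algebras that embed regularly into one another but are not isomorphic, directly refuting SB. Since $\dia$ is consistent relative to $\mathrm{ZFC}$, this gives Con($\mathrm{ZFC}$)$\Rightarrow$Con($\mathrm{ZFC}+\neg$SH$+\neg$SB).

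For the positive half the plan is to force a model of $\neg$SH in which, up to mutual regular embeddability, the only $\aleph_1$-Souslin algebras are the powers $\B^{(n)}=\prod_{i<n}\B$ ($1\le n\le\omega$; here $\B^{(n)}\cong\RO(\bigoplus_{i<n}S)$ for a Souslinization $S$ of $\B$) of a single \emph{simple} $\aleph_1$-Souslin algebra $\B$. In such a model SB holds, for two reasons. First, any $\aleph_1$-Souslin algebra that regularly embeds into a \emph{simple} one must be isomorphic to it, since its image is an atomless complete --- hence improper --- subalgebra; so every bi-embeddable pair in which one member is $\B=\B^{(1)}$ consists of isomorphic algebras. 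Second, by the subalgebra analysis of Part~1 --- specifically Example~\ref{expl:only1subalg} together with Theorem~\ref{thm:large} applied to the diagonal of $\B^{(n)}$, which is a large subalgebra --- the atomless complete subalgebras of $\B^{(n)}$ are, up to the obvious identifications, precisely the block-diagonal copies of $\B^{(k)}$ for $1\le k\le n$. It follows that the $\B^{(n)}$ are pairwise non-isomorphic, that $\B^{(m)}$ embeds regularly into $\B^{(n)}$ exactly when $m\le n$, and that $\B^{(\omega)}$ embeds into no finite power; hence among these algebras bi-embeddability coincides with isomorphism, and SB holds throughout the model.

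To build the model, begin over $L$ --- where $\dia$ holds and hence, by Example~\ref{expl:simple}, a simple $\aleph_1$-Souslin algebra $\B$ with Souslinization $S$ is available --- and run a finite-support c.c.c.\ iteration of length $\omega_2$, using a $\dia_{\omega_2}$-guided bookkeeping of all nice names for $\aleph_1$-Souslin algebras. Whenever the bookkeeping presents an $\aleph_1$-Souslin algebra $\C$ not isomorphic to any $\B^{(n)}$, destroy it by forcing a cofinal branch through a Souslinization $T$ of $\C$ --- chosen so that $T$-forcing preserves the Souslinity of $S$, equivalently so that $\bigl(\bigoplus_{i<n}S\bigr)\otimes T$ remains $\aleph_1$-Souslin for every $n\le\omega$. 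Since each $\B^{(n)}$ is preserved at every stage, $\neg$SH holds in the final model, and a routine closing-off argument --- every $\aleph_1$-Souslin algebra of $V_{\omega_2}$ has a name appearing at an earlier stage and was therefore caught and destroyed unless already on the list --- shows that the only $\aleph_1$-Souslin algebras there are the $\B^{(n)}$, so SB holds as analysed above. The main obstacle, and the place where the real work lies, is the displayed requirement on $T$: one must prove that an $\aleph_1$-Souslin algebra genuinely different from the powers of $\B$ always admits a Souslinization sufficiently ``transversal'' to $S$ and to the trees $\bigoplus_{i<n}S$, which is engineered by a $\dia$-driven reflection argument in the style of Sections~\ref{sec:rigid-non-rigid}--\ref{sec:unique_non-nice}, with the Reduction Lemma~\ref{lm:reduction} used to keep the t.e.r.s coding the surviving skeleton honest through the limit stages of the iteration.
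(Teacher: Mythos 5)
Your negative half is exactly the paper's argument: the construction of Section~\ref{sec:no_schroeder}, instantiated at $\mu=\omega$ (so that $\dia_{\omega_1}(\CF_\omega)$ is the usual $\dia$), yields a bi-embeddable, non-isomorphic pair of $\aleph_1$-Souslin algebras, and $\dia$ gives $\neg$SH. The positive half, however, is not the paper's route and contains a genuine gap. The paper does not build a new model at all: it invokes Abraham's model from \cite{abraham-shelah_aronszajn_trees}, in which a \emph{free} Souslin tree $R$ is preserved and every $\aleph_1$-Souslin tree contains a derived tree of $R$ as a subtree, so that every $\aleph_1$-Souslin algebra is a countable product of algebras of derived trees, and SB is checked for that class. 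Freeness of the preserved tree is not cosmetic; it is precisely what makes the iteration's preservation lemma work.

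Your plan to preserve a \emph{simple} algebra $\B$ instead fails concretely. First, the final model cannot have $\{\B^{(n)}\}$ as its class of Souslin algebras. A simple Souslin algebra is never homogeneous (by Theorem~\ref{thm:subalg_of_hom} a homogeneous Souslin algebra has an $\infty$-nice, hence proper atomless complete, subalgebra), so there is $b\in\B^+$ with $\B\uhr b\not\cong\B$. Now $\B\uhr b$ is itself a simple $\aleph_1$-Souslin algebra (a proper atomless complete subalgebra $\D$ of $\B\uhr b$ would give the proper atomless complete subalgebra $\{d+x\mid d\in\D,\ x\leq -b\}$ of $\B$), hence not isomorphic to any $\B^{(n)}$ with $n\geq 2$ (those have the diagonal as a proper subalgebra, Example~\ref{expl:only1subalg}) nor to $\B^{(1)}=\B$. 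Yet $\B\uhr b$ can never be destroyed while $\B$ survives: an uncountable antichain in $\B\uhr b$ is one in $\B$, and a failure of $(\omega,\infty)$-distributivity in $\B\uhr b$ is one in $\B$. So your closing-off claim is false, the actual class of Souslin algebras in the model contains at least all countable products of relative algebras of $\B$, and your SB verification does not address that class. Second, the step you defer as ``the main obstacle'' --- that every unwanted $\C$ admits a Souslinization $T$ with $\bigl(\bigoplus_{i<n}S\bigr)\otimes T$ Souslin --- is the entire technical content of the Abraham--Shelah construction and is simply false for algebras such as $\B\uhr b$ or $\B\uhr b\times\B$; it cannot be produced by the Reduction Lemma~\ref{lm:reduction} or a $\dia$-reflection in the style of Sections~\ref{sec:rigid-non-rigid}--\ref{sec:unique_non-nice}, which construct individual trees rather than iteration preservation lemmas. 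Hence the consistency of SB with $\neg$SH is not established by your argument.
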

The proof of this theorem stretches
over the remaining two sections of the paper.
In fact we will show that the Schr\"oder-Bernstein Theorem for $\kappa$-Souslin
algebras consistently fails for every successor cardinal $\kappa$.

\subsection{A model of $\neg${\sf SH} where the Schr\"oder-Bernstein Theorem
for $\aleph_1$-Souslin algebras holds}
\label{sec:schroeder_yes}

The model we use for the first part of our independence proof was constructed
by Abraham and published in \cite[Section 4]{abraham-shelah_aronszajn_trees}.
It is a Jensen-style forcing iteration with the modification that
not all $\aleph_1$-Souslin trees are killed, but one is preserved.
Similar models have also been obtained using the $P_{\max}$-forcing method of
Woodin, cf.~\cite[Section 8]{larson} and \cite[Section 4.0]{shelah-zapletal}.

This preserved tree has a certain property which frequently appears in the
literature on Souslin trees under various names:
Jensen (\cite{GKH}) and Todor\v{c}evi\'c \cite{todorcevic_trees_and_orders}
call these Souslin trees \emph{full} trees,
Abraham and Shelah (\cite{abraham-shelah,abraham-shelah_aronszajn_trees})
use \emph{Souslin trees with all derived trees Souslin},
Fuchs and Hamkins (\cite{degrigST}) denote them as
\emph{($\omega$-fold) Souslin off the generic branch},
while Larson (\cite{larson}), Shelah and Zapletal (\cite{shelah-zapletal})
simply say \emph{free} trees.

We follow the last three authors and
call an $\aleph_1$-normal tree $T$ of height $\omega_1$ \emph{free},
if for every finite antichain $A=\{s_0,\ldots,s_n\}$ the product tree
$$\bigotimes_{k\leq n} T(s_k)$$
is an $\aleph_1$-Souslin tree.
Abraham calls a product tree like this \emph{a tree derived from $T$}
(of dimension $n+1$).
In his model, call it $V$, there is a free tree $R$
such that every $\aleph_1$-Souslin tree contains a copy of one $R$'s derived
trees as a subtree.
But this means that every $\aleph_1$-Souslin tree in $V$ is the tree sum
of countably many trees which are all derived from $R$.

Furthermore, it is easily checked that for a free tree $R$
the class $\mathscr{D}_R$ of countable Cartesian products
of the regular open algebras of
all trees derived from $R$, i.e. the class $(\mathscr{S}_{\aleph_1})^V$
of all $\aleph_1$-Souslin algebras in $V$,
has the  Schr\"oder-Bernstein Property.
So the Schr\"oder-Bernstein Theorem for $\mathscr{S}_{\aleph_1}$ holds
in this model and is therefore consistent to the theory
{\sf ZFC + $\neg$SH}.

\subsection{No Schr\"oder-Bernstein Theorem under $\dia$}
\label{sec:no_schroeder}

To prove the complementary consistency statement for our independence result,
we perform a last $\dia_\kappa$-construction of a $\kappa$-Souslin tree.

\begin{thm}
Assume that $\mu$ is a regular cardinal such that $\mu^{<\mu}=\mu$ and
$\dia_{\mu^+}(\CF_\mu)$ hold. Let $\kappa:=\mu^+$.
Then there is a homogeneous $\kappa$-Souslin algebra $\B$ that has a pair of
$\infty$-nice subalgebras $\A$ and $\C$
such that $\C$ is a subalgebra of $\A$ and
isomorphic to $\B$ yet $\A$ and $\B$ are not isomorphic:
$$\B\, \cong \,\,\C \,\leq\, \A \,\leq\, \B\, ,
\quad \text{yet}\quad  \A \, \not\cong \,\B\,.$$
\end{thm}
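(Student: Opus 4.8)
The argument is a final $\dia_{\kappa}(\CF_{\mu})$-construction of a $\kappa$-Souslin tree, in the style of Theorems \ref{expl:rig_not_simple} and \ref{thm:unique_non-nice} but now aimed at building a tree together with two nested tree equivalence relations and a self-referential isomorphism. Concretely, I would recursively build along $\kappa$: a $\kappa$-normal, $\mu$-closed, $\mu$-splitting tree $T$ on the underlying set $\kappa$; two $\mu$-nice tree equivalence relations $\equiv_{\A}$ and $\equiv_{\C}$ on $T$ with $\equiv_{\A}$ a refinement of $\equiv_{\C}$ (so that $\equiv_{\C}$ is coarser, hence the algebra it represents is the smaller one); a club $D\subseteq\kappa$ and a tree isomorphism $\sigma\colon T\uhr D\to (T/\!\!\equiv_{\C})\uhr D$; and, as in the proof of Theorem \ref{thm:subalg_of_hom}, a coherent family of tree isomorphisms $\varphi_{st}\colon T(s)\cong T(t)$ for nodes of equal height, witnessing that $T$ (hence $\B:=\RO T$) is homogeneous. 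Writing $\A$ and $\C$ for the subalgebras of $\B$ represented by $\equiv_{\A}$ and $\equiv_{\C}$, the coarsening relation gives $\C\leq\A\leq\B$; $\mu$-niceness of $\equiv_{\A}$ and $\equiv_{\C}$ makes $\A$ and $\C$ $\infty$-nice in $\B$; and $\sigma$ yields $\C=\RO(T/\!\!\equiv_{\C})\cong\RO(T\uhr D)=\RO T=\B$. It then remains only to arrange $\A\not\cong\B$, which is done by diagonalization against potential isomorphisms, as explained below.

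The only nontrivial stage is a limit $\alpha<\kappa$ with $\cf(\alpha)=\mu$; successor stages just append $\mu$ new immediate successors and extend $\equiv_{\A},\equiv_{\C}$ (maintaining $\mu$-niceness and the refinement), $\sigma$, and the $\varphi_{st}$ in the essentially forced coherent way, and limits of cofinality $<\mu$ take full limits because $T$ is $\mu$-closed. At a stage $\alpha$ with $\cf(\alpha)=\mu$ the space $[T\uhr\alpha]$ is homeomorphic to ${}^{\mu}\mu$, and the new level $T_{\alpha}$ is to be chosen as a dense set $Q\subseteq[T\uhr\alpha]$ of size $\mu$ meeting the following demands, each of which is $\mu$-comeagre (or obtained by deleting fewer than $\mu$ points from a $\mu$-comeagre set), so that they can all be met simultaneously by the Baire Category Theorem \ref{thm:baire} for weight $\mu$:
\begin{enumerate}[(i)]
\item $Q$ is suitable for $\equiv_{\A}$ and for $\equiv_{\C}$, so that both relations extend honestly to $T\uhr(\alpha+1)$ (use the Reduction Lemma \ref{lm:reduction} twice, first for $\equiv_{\A}$ and then for $\equiv_{\C}$, whittling a prescribed $\mu$-comeagre target set down to a $\mu$-comeagre set suitable for both; the relevant classes are $\mu$-Baire by Proposition \ref{prp:baire_for_classes});
\item the induced level-preserving homeomorphism $\overline{\sigma}\colon[T\uhr\alpha]\to[(T/\!\!\equiv_{\C})\uhr\alpha]$ carries the chosen set of branches onto the set of $\equiv_{\C}$-classes that get extended, i.e. $\overline{\sigma}(Q)=q(Q)$ where $q$ is the quotient map, so that $\sigma$ extends to level $\alpha$ and $\alpha$ can be put into $D$ (here one builds $Q$ by a back-and-forth of length $\mu$, closing the set of hit $\equiv_{\C}$-classes under $q^{-1}\circ\overline{\sigma}$ and $\overline{\sigma}^{-1}\circ q$; Lemma \ref{lm:nice_proj} and Proposition \ref{prp:comeagre_images} guarantee that this condition stays $\mu$-comeagre);
\item above each node $T_{\alpha}$ looks the same as dictated by the $\varphi_{st}$, so that homogeneity of $T$ is preserved and the isomorphisms extend (exactly as in Theorem \ref{thm:subalg_of_hom}, the needed symmetries of the initial segments being supplied by the Kurepa Lemma \ref{thm:kurepa});
\item if $R_{\alpha}$ codes a maximal antichain $E$ of $T\uhr\alpha$, every branch in $Q$ passes through $E$, so that $E$ stays maximal and $T$ is sealed against cofinal branches;
\item if $R_{\alpha}$ codes an isomorphism $\psi$ between a cofinal normal subtree of $T\uhr\alpha$ and one of $(T/\!\!\equiv_{\A})\uhr\alpha$, then we fix some $x_{0}\in Q$ and arrange $\psi(x_{0})\notin Q$, so that $\psi$ fails to extend to level $\alpha$.
\end{enumerate}
(To avoid any conflict between (ii) and (v) one splits $\CF_{\mu}$ into two stationary pieces, devoting one to (ii)/(iv) and the other to (iv)/(v).)

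\textbf{Verification.} A routine $\dia$-argument shows that $T$ is a homogeneous $\kappa$-Souslin tree, so $\B=\RO T$ is a homogeneous $\kappa$-Souslin algebra; that $\equiv_{\A}$ and $\equiv_{\C}$ are $\mu$-nice t.e.r.s with $\equiv_{\A}$ refining $\equiv_{\C}$, whence $\A$ and $\C$ are $\infty$-nice in $\B$ and $\C\leq\A\leq\B$; and that $\sigma$ witnesses $\C\cong\B$. Finally $\A\not\cong\B$: were $\Phi\colon\A\to\B$ an isomorphism, the Restriction Lemma \ref{lm:restriction} would give a club on which $\Phi$ is a tree isomorphism between cofinal normal subtrees of $T/\!\!\equiv_{\A}$ and of $T$, and the usual reflection argument would produce stationarily many $\alpha\in\CF_{\mu}$ at which $R_{\alpha}$ codes an initial piece of such an isomorphism, contradicting clause (v). Hence $\B\cong\C\leq\A\leq\B$ with $\A\not\cong\B$, the desired counterexample to the Schr\"oder--Bernstein property for $\kappa$-Souslin algebras.

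\textbf{Main obstacle.} The crux is running clauses (ii) and (v) against each other: we assert that $\B$ embeds into $\A$ (through the very fine relation $\equiv_{\C}$, which realizes $\C\cong\B$ as an $\infty$-nice subalgebra of $\A$) while no isomorphism maps $\A$ \emph{onto} $\B$. These coexist because the copy of $\B$ inside $\A$ is exhibited by a deliberately non-surjective (levelwise) embedding, whereas a hypothetical isomorphism $\A\cong\B$ must be levelwise onto and is precisely what clause (v) destroys; concretely, deleting the single branch $\psi(x_{0})$ from the $\mu$-comeagre set produced by (i)--(iv) still leaves a $\mu$-comeagre set. The genuinely delicate point is clause (ii): keeping $T$, the two nested t.e.r.s, the self-referential isomorphism $\sigma$, and the homogeneity isomorphisms mutually coherent through limit stages forces one to choose $T_{\alpha}$ so that $\overline{\sigma}$ matches the new $T$-nodes with the new quotient-nodes, and verifying that this ``fixed-point'' demand remains a $\mu$-comeagre (indeed, achievable by a length-$\mu$ back-and-forth) condition on $Q$, simultaneously with suitability for both t.e.r.s, is where the descriptive set theory of Section \ref{sec:dst} is really used.
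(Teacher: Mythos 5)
Your architecture coincides with the paper's: a $\dia_\kappa(\CF_\mu)$-recursion building $T$ together with two nested $\mu$-nice t.e.r.s, a coherent family of homogeneity isomorphisms, a tree isomorphism $T/\!\!\equiv_{\C}\,\to T$ realizing $\C\cong\B$, and a diagonalization of all potential isomorphisms between $\A$ and $\B$; the final verification paragraph is also the standard argument. However, there is a genuine gap at exactly the point you flag as the ``main obstacle'', and your resolution of it does not work as stated.

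The difficulty is that the set $Q$ chosen at a limit level of cofinality $\mu$ is not merely a dense subset of a $\mu$-comeagre set: by your own clauses (ii) and (iii) it must be \emph{closed} under the monoid $H$ of maps generated by the induced homeomorphisms $\overline{\varphi_{st}}$ and by the (pre-)image correspondence for the self-isomorphism. So ``deleting the single branch $\psi(x_0)$ from the $\mu$-comeagre set produced by (i)--(iv)'' is not enough: after the deletion the set is no longer $H$-closed, and re-closing it under $H$ --- which you must do to preserve homogeneity and to extend the isomorphism onto the quotient --- can force the forbidden branch, or another branch of the forbidden class, straight back into $Q$. What is actually needed, and what occupies the bulk of the paper's proof, is to excise the entire \emph{forbidden set} $F_y=\bigcup_{h\in H}(\overline{\psi}\circ h)^{-1}(y)$ of the diagonalizing branch $y$; to check that the set of branches $y$ with $y\notin F_y$ is $\mu$-comeagre (using that $\overline{\psi}$ is nowhere one-to-one because the classes of the finer t.e.r.\ are non-degenerate); and, crucially, to verify that the remaining set $M\setminus F_y$ is still invariant under every $h\in H$. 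This last verification is a non-trivial case analysis according to whether $h$ is a homeomorphism, factors as $g\circ\overline{\varphi}$, or has the form $h'\circ\overline{\varphi}\circ g$, and it exploits the specific coordinatewise definitions of $\varphi$ and the $\varphi_{st}$. None of this appears in your proposal. Your parenthetical suggestion of splitting $\CF_\mu$ into two stationary pieces does not rescue the argument either: the homogeneity closure (iii) and the coherence of the self-isomorphism (ii) must be maintained at \emph{every} limit level of cofinality $\mu$ (the club on which the self-isomorphism lives meets every stationary set, and choices made at a skipped level constrain which branches are extendible later), so the conflict between closure and deletion cannot be scheduled away; it has to be resolved head-on at the diagonalization stages.
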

\begin{proof}
Let $\mu$ and $\kappa$ be as in the statement of the theorem.
We will construct the $\mu$-closed and $\mu$-normal
Souslinization $T\subset {^{<\kappa}\mu}$ of $\B$ along with
\begin{itemize}
\item $\mu$-nice t.e.r.s $\equiv$ and $\sim$ representing $\C$ and $\A$
  respectively such that $\sim$ refines $\equiv$ in an $\mu$-nice fashion, and
\item a family $(\varphi_{st})$ of tree automorphisms $\varphi_{st}:T\to
  T$ satisfying $\varphi_{st}(s)=t$ for all pairs $s,t$ of nodes of the same
  height in $T$, and
\item a tree isomorphism $\varphi:T/\!\!\equiv\,\to T$; we will define
  $\varphi$ as a map $T\to T$ which is invariant under $\equiv$.

\end{itemize}
Furthermore we diagonalize every potential isomorphism between $\A$ and $\B$,
i.e., between trees $T\uhr C$ and $(T/\!\!\sim)\uhr C$
for every club $C\subset\kappa$.

We start our construction with the trivial level $T_0 = \{\varnothing\}$
carrying only trivial relations.
For the definition of a successor level $T_{\alpha+1}$ out of level $T_\alpha$ we
attach to every node $ s \in T_\alpha \subseteq {^{\alpha}\mu} $ all possible
successors $s^\smallfrown(\nu) = s \cup \{(\alpha,\nu)\} \in {^{\alpha+1}\mu}$,
where $\nu$ ranges over all ordinals less than $\mu$.

In order to extend the t.e.r.s to the new level fix
a partition $ \mathscr{P} = ( P_{\xi,\eta} ) $ of $ \mu $ into $ \mu $ sets
of size $ \mu $ indexed by pairs $ \xi,\eta $ of ordinals less than $ \mu $.
Set $ P_\xi := \bigcup_{\eta<\mu} P_{\xi,\eta}$
and define the extensions of $\equiv$
and $ \sim $ given on $ T \uhr (\alpha+1) $ to $ T_{\alpha+1} $ by letting for
$r,t\in T_\alpha$
$$ r^\smallfrown(\nu) \equiv t^\smallfrown(\lambda) \quad
:\iff r \equiv t \text{ and }
(\exists\, \xi < \mu) \nu,\ \lambda\ \in P_\xi $$
and 
$$ r^\smallfrown(\nu) \sim t^\smallfrown(\lambda) \quad :\iff r \sim t \text{ and }
(\exists\ \xi,\eta < \mu) \nu,\ \lambda \in P_{\xi,\eta}. $$

To extend $\varphi$ to the next level let
$$\varphi( r^\smallfrown(\nu)) = \varphi( r )^\smallfrown(\xi)
\quad\text{ if and only if }\quad \nu \in P_\xi.$$
The tree automorphisms
$\varphi_{st}:T\uhr ( \alpha + 1 ) \to T \uhr ( \alpha + 1 )$
(for $ \hgt(s) = \hgt(t) \leq \alpha $)
are extended to the next level by the simple rule
$\varphi_{st}(r^\smallfrown(\nu))=\varphi_{st}(r)^\smallfrown(\nu)$.
To install for $s=r^\smallfrown(\nu),v=t^\smallfrown(\lambda)\in T_{\alpha+1}$
a new tree automorphism of $T$ we extend
the initial segment $\varphi_{rt} \uhr (T \uhr \alpha+1)$ to $T_{\alpha+1}$
in another direction:
For $w\in T_\alpha$ and $\varepsilon<\mu$ set:
$$\varphi_{sv}(w^\smallfrown(\varepsilon)) = 
\begin{cases}
\varphi_{rt}(w)^\smallfrown(\varepsilon), &
\text{ if }\varepsilon \neq\lambda,\nu\\
\varphi_{rt}(w)^\smallfrown(\mu), & \text{ if }\varepsilon = \lambda\\
\varphi_{rt}(w)^\smallfrown(\lambda), & \text{ if }\varepsilon = \nu.\\
\end{cases}$$
We have finished the successor stage of the construction
of the tree $T$ and the additional structure.
Note that for every $\gamma$
the final segments of the images $\varphi(x)$ and $\varphi_{st}(x)$
of $x\in[T\uhr\alpha]$ beyond $\gamma$
only depend on the final segment of $x$ beyond $\gamma$.

From now on we consider the limit stage $\alpha$.
Once the set of limit nodes on level $\alpha$ is chosen
all mappings and t.e.r.s extend to the new level in a unique way.

We have to find a set $ Q \subset [T\uhr\alpha] $ such that $Q$ is
\begin{enumerate}[(i)]
\item of cardinality $\mu$,
\item dense in $ [ T\uhr\alpha ] $,
\item  closed under the application of the
(homeomorphism of $[T\uhr\alpha]$ induced by the)
tree automorphism  $\overline{\varphi_{st}}$
(and its inverse mapping $\overline{\varphi}_{ts}$)
for all nodes $s,t\in T\uhr\alpha$ of the same height,
\item suitable for $\equiv$ and $\sim$,
\item and closed under the application of the continuous map
  $\overline{\varphi}$ and its inverse mapping in
  the sense that for every $x \in Q$ there are  $u,y \in Q $ such that we have
  $ \overline{\varphi}( u /\!\!\equiv ) = x $
  and $ \overline{\varphi} ( x /\!\!\equiv ) = y $.
\end{enumerate}
We take the members $R_\nu$ of our $\dia_\kappa$-sequence to be subsets or
binary relations
on the initial segments $T\uhr\alpha$ of our tree by virtue of some pre-fixed
bijection between $ \kappa$ and ${^{<\kappa}\mu} \cup {^{<\kappa}(\mu\times\mu)}$.
Let $H$ be the monoid of maps acting on $[T\uhr\alpha]$ generated by the maps
$\overline{\varphi}$ and $\overline{\varphi_{st}}$ for $s,t\in T\uhr\alpha$ of
the same height.
For a point $x\in [T\uhr\alpha]$ let $\orb(x):=\orb_H(x):=\{h(x)\mid\ h\in H\}$
be its orbit under the action of $H$.

If $\alpha < \kappa$ is a limit ordinal such that either $\cf(\alpha)<\mu$ or
the set $R_\alpha$ neither is a maximal antichain of $T \uhr \alpha$ nor does it
induce a tree isomorphism between $(T\uhr C)/\!\!\sim$ and $T \uhr C$ for some
club set $C$ of $\alpha$, then we find a subset $Q \subset [T \uhr \alpha]$
satisying points (i-v) above as follows. 
Choose any branch $y \in [T\uhr\alpha]$ and let $Q_0 := \orb(y)$.
Note that $Q_0$ is already closed under the action of $H$,
dense in $[T\uhr\alpha]$ and suitable for $\sim$ and $\equiv$.
(This follows from the construction as $\orb(x)\cap (x/\!\!\equiv)$ is
dense in $(x/\!\!\equiv)$ and similarly for $\sim$.)
To provide inverse images under the maps from $H$ it suffices
to care about $\overline{\varphi}$
as all the other generators have their inverse in $H$.
For every $x \in [T\uhr\alpha]$ fix a branch $z_x$ such that $\varphi(z_x)=x$.
For $n\in \omega$ let $Q_{n+1}:= Q_n \cup \bigcup_{x\in Q_n} \orb(z_x)$ and
finally $Q=\bigcup Q_n$.

If $R_\alpha$ guesses a maximal antichain $A$ of $ T \uhr \alpha $ and a tree
isomophism $\psi$ between  $(T \uhr C)/\!\!\sim$ and $ T \uhr C$ for some club
$C\subseteq\alpha$
(again we take $\psi$ to be a $\sim$-invariant map $T\uhr C \to T\uhr C$),
then $Q$ also has to satisfy:
\begin{itemize}
\item[(vi)] every branch $x \in Q$ passes through a node $s$ in $A$ and
\item[(vii)] there is a branch $y \in Q$ such that $\overline{\psi}(x) \ne y$
  for all $ x \in Q $.
\end{itemize}
(Of course, in case that $R_\alpha$ only guesses one out of antichain and tree
isomorphism the other corresponding condition is void.)
We satisfy point (vi) by simply restricting our set of potential nodes to
$$K= \{ x\in [T\uhr\alpha]\mid\ (\exists\ s\in A) s\in x\}$$
which is an open dense subset of $[T\uhr\alpha]$.
To fulfill the last requirement is a more subtle task.
The set $Q$ has to be designed around a special branch $y$ which
on one hand must not be reached by $\overline{\psi}$ from within $Q$
and on the other hand brings its orbit
and also some necessary inverse images under $\overline{\varphi}$.
For every branch $y\in K$ we define its \emph{forbidden set}
$$F_y := \bigcup_{h\in H} (\overline{\psi}\circ h)^{-1}(y).$$
First we have to check that there are enough branches
that lie outside of their forbidden sets.
We claim that the set
\begin{eqnarray*}
 L  & := & \{ y \in K \mid\ y \notin F_y \}\\
    &  = & \bigcap_{h\in H}\{ y \in K \mid\ \overline{\psi}(h(y)) \ne y \}
\end{eqnarray*}
  is $\mu$-comeagre in $[T\uhr\alpha]$.
For $h\in H$ the set $\{ y \in K \mid\ \overline{\psi}(h(y)) \ne y \}$
is clearly open in $K$ and, as $\overline{\psi}$ is nowhere 1-to-1, also dense.

Our next step provides $h$-pre-images for all $h\in H$.
Again, it suffices to care about $h=\varphi$.
For a $\mu$-comeagre subset of $[T\uhr\alpha]$, such as our set $L$ defined
above, the subset of branches $x$ such that the whole orbit of $x$ is in $L$,
i.e., $\orb(x)\subset L$, is again $\mu$-comeagre.
Also the set of $\overline{\varphi}$-images of the latter branches is
$\mu$-comeagre by by Proposition \ref{prp:comeagre_images} and
Lemma \ref{lm:nice_proj};
and therefore also the set $M := \bigcap M_n$,
where $M_0 := L$ and
$$M_{n+1} := \overline{\varphi}"\{x \in M_n \mid\ \orb(x)\subset M_n\}.$$
$M$ and all the sets $M_n$ are clearly closed under the action of $H$,
so they  contain $\orb(x)$ along with $x$ and are,
as a consequence, suitable for $\equiv$.
On the other hand, if we are given $ y \in M $,
then for every $n$ the set
$\overline{\varphi}^{-1}(y) \cap M_n$ is $\mu$-comeagre in the space
$\overline{\varphi}^{-1}(y)$
(which is homeomorphic to $[T\uhr\alpha]$ by Kurepas Lemma).
So the set $\overline{\varphi}^{-1}(y) \cap M$ is also $\mu$-comeagre in 
$\overline{\varphi}^{-1}(y)$ and thus non-empty.

Finally we choose any member $y\in M$ and let $N:= M \setminus F_y$.
If we can show that $h"N = N$ for all $h\in H$,
then $N$ is suitable for $\sim$ and $\equiv$
and we can proceed as in the default case
above with $Q_0 := \orb(y)$ and the only further restriction
that the inverse images $z_x$ of $x$ under $\overline{\varphi}$
are to be chosen from $N$.

For every $x\in M$ and $h\in H$ we clearly have that $h(x)\in F_y$ implies
$x \in F_y$, so $h" N\subseteq N$ holds for all $h\in H$.

Concerning the converse inclusion,
it is again enough to consider $h=\overline{\varphi}$,
because $\overline{\varphi}_{st}$ is invertible in $H$ and
therefore done by the first inclusion.
We show that $\overline{\varphi}(x)=z$ and $x\in F_y$ imply
that either $z \in F_y$ or there is a branch $x' \notin F_y$ with
$x'\equiv x$ and thereby $\overline{\varphi}(x')=z$ as well.
So let $\overline{\varphi}(x)=z$ and $\overline{\psi}(h(x))=y$ for some
$h\in H$.
There are three possible types for $h$.
In the first case let $h$ be a homeomorphism, i.e., a concatenation of maps
of type $\overline{\varphi}_{st}$.
Then $h$ locally maps $\equiv$ to $\equiv$ and $\sim$ to $\sim$.
As every $\sim$-class is nowhere dense in its corresponding $\equiv$-class,
$F_y \cap \overline{\varphi}^{-1}(z)$ is $\mu$-comeagre in
$\overline{\varphi}^{-1}(z)$.
So there must be some $x'\in N \cap \overline{\varphi}^{-1}(y)\setminus F_y$.
If in the second case
$h = g \circ \overline{\varphi}$ then we are already done,
for then we have $\overline{\varphi}(g(z)) = y$,
and $z$ lies in the forbidden set.
Finally let $h = h'\circ \overline{\varphi} \circ g$
where $g$ is a homeomorphism.
Here we exploit the fact that $g$ leaves a final segment of $x$ unchanged
and that the components of $\overline{\varphi}(x)$ only depend on the
corresponding components of $x$.
So let $s,t,u\in T_\alpha$ and $v,w$ be such that
$ x = s^\smallfrown v $
and
$ g(x) = t^\smallfrown v $
while
$z = \overline{\varphi}(x) = r^\smallfrown w$
and
$\overline{\varphi}(g(x)) = u^\smallfrown w$.
Then easily
$$\overline{\varphi}(g(x)) = u^\smallfrown w =
\overline{\varphi}_{ru}(r^\smallfrown w) = \overline{\varphi}_{ru}(z).$$
But then $\overline{\psi}\circ h'\circ \overline{\varphi}_{ru} (z) = y$,
so $z$ again is a member of the forbidden set.

This finishes the construction and it can easily be seen
that with $\B := \RO T$, $\A := \langle T/\!\! \sim\rangle$
and $\C := \langle T/\!\! \equiv\rangle$ we have that
$\C$ is $\infty$-nice in $\A$ which in turn is $\infty$-nice in $\B$,
and that $\B$ and $\C$ are isomorphic via $\varphi$ while
$\B$ and $\A$ are not, because all potential isomorphisms between
$T$ and $T/\!\!\sim$ have been diagonalized away.
\end{proof}

\section{Concluding remarks}

Concerning the representation theory of Souslin algebras,
we have not touched here how it can be used to analyze
independent subalgebras and (free) product Souslin algebras.
This can fruitfully be applied, e.g.,
to strongly homogeneous and to free Souslin algebras,
cf. \cite[Sections 1.5-6]{diss}.

Of course, most if not all of the constructions in Part 2 of the present
paper could be carried out without any recourse to topology and
using sophisticated or involved combinatorial arguments.
But, as we hope the constructions performed in the preceeding sections
demonstrate, the topological view substantially simplifies the
diagonalization procedures once the basic notions have been established.
In most cases, it is not hard to see that consistent ``conditions''
imposed on the branches to be extended
are comeagre sets which therefore can freely be combined
(up to $\mu$ conditions at a time).

In \cite{mcsa} (resp. in \cite[Chapter 2]{diss}) we give
a further construction of an $\aleph_1$-Souslin algebra with
aboundant homogeneity properties and with many subalgebras.
We hope that such a construction can in the end be used to construct
a model of ZFC with a unique Souslin line (up to isomorphism).

\section*{Acknowledgments}

I sincerely thank my thesis advisor Sabine Koppelberg,
in particular for her scrutiny in pointing out the gaps in my
na\"ive first arguments.

I would also like to thank Peter Krautzberger
for proof reading large parts of this text
and for his valuable suggestions on how to improve
the presentation.

\bibliographystyle{abbrv}
\bibliography{souslin,buecher}

\end{document}